\theoremstyle{plain}
\newtheorem{theorem}{Theorem}[section]
\newtheorem{lemma}[theorem]{Lemma}
\newtheorem{corollary}[theorem]{Corollary}
\newtheorem{proposition}[theorem]{Proposition}
\theoremstyle{definition}
\newtheorem{definition}[theorem]{Definition}
\newtheorem{example}[theorem]{Example}
\theoremstyle{remark}
\newtheorem{remark}[theorem]{Remark}
\numberwithin{equation}{section}
\DeclareMathOperator{\dimh}{dim_H}
\DeclareMathOperator{\dist}{dist}
\newcommand{\red}{\color{red}}
\def\R{\mathbb R}
\def\Z{\mathbb Z}
\def\D{\mathcal D}
\def\j{\bm j}
\def\Q{\mathbb Q}
\def\h{\mathcal H}
\def\S{\mathcal S}
\date{\today}
\begin{document}

\title[Self-embeddings of Bedford-McMullen carpets with dependent ratios]{Self-embedding similitudes of Bedford-McMullen carpets with dependent ratios}

\author{Jian-Ci Xiao}
\address{School of Mathematics, Nanjing University of Aeronautics and Astronautics, Nanjing, China}
\email{jcxiao@nuaa.edu.cn}

\subjclass[2010]{Primary 28A80; Secondary 28A78}
\keywords{Bedford-McMullen carpets, self-embedding, obliqueness, generalized Sierpi\'nski carpets, logarithmic commensurability, deleted-digit sets.}

%----------------
\begin{abstract}
    We prove that any non-degenerate Bedford-McMullen carpet  does not admit oblique self-embedding similitudes; that is, if $f$ is a similitude sending the carpet into itself, then the image of the $x$-axis under $f$ must be parallel to one of the principal axes. This result leads to a logarithmic commensurability result on the contraction ratios of such embeddings, completing a previous study by Algom and Hochman [Ergod. Th. \& Dynam. Sys. {\bf 39} (2019), 577--603] on Bedford-McMullen carpets generated by multiplicatively independent exponents. Our approach also provides a new proof of their non-obliqueness statement that avoids analyzing the tangent sets.

    For the self-similar case, however, we construct a generalized Sierpi\'nski carpet that is symmetric with respect to an appropriate oblique line and hence admits a reflectional oblique self-embedding. As a complement, we prove that if a generalized Sierpi\'nski carpet satisfies the strong separation condition and permits an oblique rotational self-embedding similitude, then the tangent of the rotation angle takes values $\pm 1$.
\end{abstract}

\maketitle

%------------------------------
\section{Introduction}

One way to gain a better comprehension of the geometric structure of fractal sets is to delve into the study of their self-embedding mappings, especially affine mappings or even similitudes. Note that a similitude $f$ on $\R^d$ can be written as $f(x)=\lambda Ox+a$, where $\lambda\geq 0$ is the \emph{similarity ratio} (or \emph{contraction ratio} if $\lambda<1$), $O$ is an orthogonal transformation on $\R^d$ (the \emph{orthogonal part}) and $a\in\R^d$ (the \emph{translation part}). For convenience, we will refer to $f$ a \emph{self-embedding similitude} of a set $K\subset\R^d$ if $f(K)\subset K$. A general principle is that imposing additional restrictions on $K$ usually results in increased constraints on its self-embedding similitudes.

Research in this direction traces back to Furstenberg~\cite{Fur67} who showed that if $A\subset \mathbb{T}:=\R/\Z$ is an infinite closed set invariant under the operations $\times 2 \pmod 1$ and $\times 3 \pmod 1$, then necessarily $A=\mathbb{T}$. A crucial reason is the multiplicative independence of $2$ and $3$, that is, neither is a rational power of the other. More recently, such phenomena have been greatly extended to the self-similar setting. Recall that a compact set $K\subset\R^d$ is called \emph{self-similar} if $K=\bigcup_{i=1}^p \varphi_i(K)$, where $\{\varphi_i\}_{i=1}^p$ is an iterated function system (IFS) consisting of contracting similitudes; see~\cite{Hut81}. In 2009, Feng and Wang~\cite{FW09} established a fundamental logarithmic commensurability theorem: if $K\subset\R$ is a totally disconnected \emph{homogeneous} self-similar set (meaning that all maps in the associated IFS share the same contraction ratio and orthogonal part) satisfying the open set condition, then any self-embedding similitude of $K$ must contract by a rational power of the common ratio. This result was later extended to higher dimensions and to inhomogeneous IFSs by Elekes, Keleti and M\'ath\'e~\cite{EKM10}, albeit with the open set condition being substituted by the strong separation condition. If in addition the IFS is homogeneous, the author~\cite{Xiao24} proved the relative openness of the embedded image. In~\cite{Alg201}, Algom characterized all affine self-embeddings of self-similar sets in the plane under the strong separation condition.

For self-affine sets, however, there are much fewer existing results. One reason for this disparity is that a natural rescaling approach, which works well in the self-similar case, usually fails in the self-affine case due to the inherent distortion. Nevertheless, the classic family of Bedford-McMullen carpets, with their simple lattice structure and delicate intrinsic properties, presents an optimal subject for investigation. A formal definition is as follows.

\begin{definition}
    Let $n\geq m\geq 2$ be integers and let $\Lambda\subset \{0,1,\ldots,n-1\}\times\{0,1,\ldots,m-1\}$ be non-empty. Set $\varphi_{i,j}$ to be the affine map given by 
    \begin{equation}\label{eq:varphiij}
        \varphi_{i,j}(x,y) = \Big( \frac{x+i}{n}, \frac{y+j}{m} \Big), \quad (i,j)\in\Lambda.
    \end{equation}
    The attractor $K(n,m,\Lambda)$ associated with the self-affine IFS $\{\varphi_{i,j}:(i,j)\in\Lambda\}$ is usually called a \emph{Bedford-McMullen carpet} when $n>m$, and a \emph{generalized Sierpi\'nski carpet} when $n=m$. More precisely, 
    \[
        K(n,m,\Lambda) = \Big\{ \Big( \sum_{k=1}^\infty \frac{i_k}{n^k}, \sum_{k=1}^\infty \frac{j_k}{m^k} \Big): (i_k,j_k)\in\Lambda \Big\}.
    \]
    To avoid trivial cases, in this paper we always assume that $1<\#\Lambda<mn$, where $\#$ denotes the cardinality. 
\end{definition}

In~\cite{EKM10}, Elekes, Keleti and M\'ath\'e studied the intersections of Bedford-McMullen carpets with their translated copies. They observed a measure drop phenomenon occurring in these intersections, except in trivial cases. A systematic study on self-embedding similitudes of Bedford-McMullen carpets was conducted by Algom and Hochman~\cite{AH19}, who showed that if the horizontal and vertical ratios $n,m$ are multiplicatively independent, then the carpet permits only nearly trivial self-embedding similitudes. To state and extend their findings, let us introduce a convenient terminology.

\begin{definition}[Obliqueness]
    \,
    \begin{enumerate}
        \item A line $L\subset\R^2$ is called \emph{oblique} if it is not parallel to one of the two principle coordinate axes.
        \item A similitude $f$ on $\R^2$ with positive similarity ratio is called \emph{oblique} if $f$ sends the $x$-axis to an oblique line.
    \end{enumerate}
\end{definition}

Algom and Hochman's main result is the following theorem.

\begin{theorem}[\cite{AH19}]\label{thm:AH19}
    Let $K=K(n,m,\Lambda)$ be a Bedford-McMullen carpet with $\frac{\log n}{\log m}\notin\Q$. Suppose $K$ is not supported on any line and is not a Cartesian product of the unit interval $[0,1]$ and some Cantor set. If $f$ is a self-embedding similitude of $K$, then $f$ is not oblique and has similarity ratio $1$.
\end{theorem}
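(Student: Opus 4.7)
My plan is to decompose the proof into two parts. First, I would deduce the ``similarity ratio equals $1$'' conclusion from the non-obliqueness conclusion via a Feng-Wang projection argument. Once $f$ is non-oblique we may write $f(x,y)=(\varepsilon_1\lambda x+a,\varepsilon_2\lambda y+b)$ with $\varepsilon_i\in\{\pm 1\}$. Projecting the inclusion $f(K)\subset K$ onto each coordinate axis produces a self-embedding similitude of ratio $\lambda$ on each projection. But the horizontal projection of $K$ is a base-$n$ deleted-digit self-similar set (which automatically satisfies the open set condition), so Feng-Wang~\cite{FW09} forces $\lambda=n^{-s}$ for some $s\in\Q_{\geq 0}$; likewise $\lambda=m^{-t}$. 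Equating yields $s\log n=t\log m$, and the hypothesis $\log n/\log m\notin\Q$ then forces $s=t=0$, that is, $\lambda=1$.

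For the non-obliqueness claim, I would argue by contradiction: assume the orthogonal part of $f$ is a rotoreflection by some angle $\theta\notin\frac{\pi}{2}\Z$. The guiding heuristic is to compare the tangent (``microset'') structure of $K$ near a generic point with that near its $f$-image. Because $K$ has a product-like structure at paired scales $(n^{-k},m^{-k})$, any microset obtained by rescaling a small neighborhood (allowing anisotropic rescaling in the two coordinate directions by different powers of $n$ and $m$) should be a product of two Cantor sets aligned with the coordinate axes. On the other hand, $f$ being a similitude conjugates the scenery at $x$ to the scenery at $f(x)$ via an exact rotation by $\theta$, so it would produce microsets of $K$ tilted by $\theta$. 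Provided these microsets are genuinely two-dimensional---a condition guaranteed by the degeneracy hypotheses that $K$ is neither supported on a line nor of the form $[0,1]\times C$---the two descriptions can be reconciled only if $\theta\in\frac{\pi}{2}\Z$.

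The main obstacle will be making the microset heuristic rigorous while genuinely using the hypothesis $\log n/\log m\notin\Q$. The natural entry point is to rescale $K$ near a carefully chosen ``generic'' point by diagonal matrices of the form $\mathrm{diag}(n^k,m^k)$ and pass to subsequential limits. Multiplicative independence of $\log n$ and $\log m$ forces the sequence $(k\log n,k\log m)$ to equidistribute modulo $1$ in the appropriate sense, so the available aspect ratios of tangents become dense in $(0,\infty)$ and the resulting family of axis-aligned microsets becomes large enough that applying any rotation by $\theta\notin\frac{\pi}{2}\Z$ yields a set distinguishable from every member of the family. The most delicate step, and the one I expect to be hardest, is transferring tangent-set information along the $f$-orbit while controlling the interplay between the conformal rescaling supplied by $f$ and the genuinely non-conformal rescaling dictated by the carpet's lattice; this is likely where deeper tools such as scenery flows or CP-chains will enter.
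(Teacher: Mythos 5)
Your plan for the ``ratio one'' half contains a genuine gap. Feng--Wang (Lemma~\ref{lem:fw09}) applies to a deleted-digit set $E(n,\D)$ only when $1<\#\D<n$, i.e.\ when it is a nontrivial Cantor set. After projecting $f(K)\subset K$ onto the two axes you therefore need \emph{both} $1<\#I<n$ and $1<\#J<m$, but the hypotheses of the theorem do not guarantee this: excluding $K=[0,1]\times C$ does not exclude $\pi_1(K)=[0,1]$ or $\pi_2(K)=[0,1]$. For instance, take $n=3$, $m=2$, $\Lambda=\{(0,0),(1,0),(2,0),(0,1)\}$: then $I=\{0,1,2\}$ and $J=\{0,1\}$, so $\pi_1(K)=\pi_2(K)=[0,1]$, while $K$ is not a product, is not contained in a line, and $\log 3/\log 2\notin\Q$. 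In this configuration your projection step yields no constraint at all, and the equation $s\log n=t\log m$ never arises. (A smaller, fixable point: a non-oblique similitude may also exchange the axes, $f(x,y)=(\pm\lambda y+a,\pm\lambda x+b)$; one then composes the two projection inclusions to obtain a self-embedding of $E(n,I)$ of ratio $\lambda^2$ before invoking Feng--Wang, but this still presupposes the nontriviality just discussed.)

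For the non-obliqueness half, your microset strategy --- rescaling by the diagonal matrices with entries $n^k,m^k$, invoking equidistribution of $(k\log n,k\log m)$ modulo $1$, and transferring scenery along the $f$-orbit via CP-chains or the scenery flow --- is precisely the route of Algom and Hochman~\cite{AH19}, from which the statement is quoted. As you note yourself, making this rigorous is the bulk of the work. The present paper's purpose in Section~3 (Theorem~\ref{thm:notoblique}) is to reprove non-obliqueness \emph{without} tangent-set analysis, by splitting on the row structure of the initial pattern: if some row is vacant ($\#J<m$), the projection dimension result of~\cite{FJS10} (Lemma~\ref{lem:projofcarpet}) gives an immediate contradiction with $\dimh\pi_2(K)$; if some row is full ($N=n$), then $K$ contains a horizontal segment and Proposition~\ref{prop:noobliquelines} forbids oblique segments; and if every row is occupied but none is full, an explicit rescaling of $f^k(K)$ via level-$k$ rectangles, exploiting only $n>m$, traps a horizontal slice of $\varphi_{Q_k}^{-1}f\varphi_{R_k}(K)$ inside a lattice-translated hole of $K$. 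So even with the scenery machinery supplied, you would be reconstructing the original Algom--Hochman argument rather than the more concrete one developed here, and the latter two cases of the paper's argument have the added benefit of carrying over verbatim to the dependent setting (Remark~\ref{rem:deindwork}).
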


To achieve this, Algom and Hochman developed an advanced tangent set machinery: they zoom in on every point inside the carpet using $m$-adic rescaling, describe the resulting tangent sets as unions of fibered products, and analyze how similitude embeddings transform the tangent sets. This microscopic analysis is coupled with a projection theorem of Peres and Shmerkin~\cite{PS09} for products of deleted-digit sets (see also Lemma~\ref{lem:projofcarpet}), which requires multiplicative independence of $m$ and $n$ to ensure that every non-principal projection has maximal dimension. Such maximality creates a dimension mismatch whenever the orthogonal part of the embedding similitude is oblique, thereby ruling out non-diagonal linear parts. For the similarity ratio, the non-obliqueness statement and the structure of tangent sets reduce the problem to that of deleted-digit sets. Since it is well known that a deleted-digit set in base $m$ cannot be embedded into another one in base $n$ when $\frac{\log m}{\log n}\notin\Q$ (see~\cite{FHR14}), this phenomenon heuristically forces the contraction ratio of the embedding similitude to be simultaneously dependent on both $n$ and $m$, which is only possible when the similitude is an isometry. The non-degenerate and non-product assumptions are also natural. For example, if $K$ is a horizontal line segment or a self-similar carpet (e.g., the Cartesian product of the middle-third Cantor set and the unit interval), then $f$ clearly need not to be an isometry. 

Our main purpose is to study to what extent Theorem~\ref{thm:AH19} remains true for the dependent case (i.e., $\frac{\log n}{\log m}\in\Q$) or even the self-similar case (i.e., $n=m$). The presence of dependence inherently complicates the self-embedding problem. Fortunately, first we are able to extend the non-obliqueness statement to the dependent case. Our method accommodates both dependent and independent cases and provides a new and more direct proof for the independent case without analyzing the tangent sets as in~\cite{AH19}.

\begin{theorem}\label{thm:main1}
    Let $K=K(n,m,\Lambda)$ be a Bedford-McMullen carpet with $n>m$ that is not supported on any line. If $f$ is a self-embedding similitude of $K$, then $f$ is not oblique.
\end{theorem}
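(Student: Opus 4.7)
The plan is to argue by contradiction. Assume $f(x,y)=\lambda O(x,y)+a$ is an oblique self-embedding similitude of $K$, and write $O=(o_{ij})$; obliqueness amounts to all four $o_{ij}\neq 0$, and orthogonality gives $|o_{11}|=|o_{22}|$, $|o_{12}|=|o_{21}|$, and $o_{11}^2+o_{12}^2=1$. Compactness of $K$ forces $\lambda\in(0,1]$. The isometric subcase $\lambda=1$ can be handled separately: $f(K)=K$, and $f$ then preserves $\conv(K)$, a convex set of full $2$-dimensional extent in $[0,1]^2$ by non-degeneracy, whose isometry group contains no oblique element.

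For $\lambda<1$, the central idea is to project to the coordinate axes and exploit the genuinely distinct scalings $1/n$ and $1/m$ (recall $n>m$ in the Bedford-McMullen setting). Let $X:=\pi_x(K)$ and $Y:=\pi_y(K)$; these are self-similar sets on $\R$ satisfying the open set condition, generated respectively by $\{x\mapsto(x+i)/n:i\in I\}$ and $\{y\mapsto(y+j)/m:j\in J\}$, where $I,J$ are the coordinate projections of $\Lambda$. The inclusion $f(K)\subset K$ yields, for every $(x,y)\in K$,
\[
\lambda o_{11}x+\lambda o_{12}y+a_1\in X, \qquad \lambda o_{21}x+\lambda o_{22}y+a_2\in Y.
\]
Using the non-degeneracy of $K$, I would locate periodic $x$- and $y$-addresses such that the associated vertical fiber $K_{x_0}\subset Y$ and horizontal fiber $K^{y_0}\subset X$ are non-trivial self-similar Cantor sets with the open set condition, of ratios $m^{-p}$ and $n^{-q}$ respectively. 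Substituting $x=x_0$ and $y=y_0$ in the displayed inclusions produces four similitude embeddings, with contraction ratios $\lambda|o_{11}|,\lambda|o_{12}|,\lambda|o_{21}|,\lambda|o_{22}|$. The same-base embeddings $K^{y_0}\hookrightarrow X$ and $K_{x_0}\hookrightarrow Y$ fall under the Feng--Wang theorem~\cite{FW09}, forcing $\lambda|o_{11}|=n^{-a}$ and $\lambda|o_{22}|=m^{-b}$ for non-negative integers $a,b$; the cross-base embeddings $K_{x_0}\hookrightarrow X$ and $K^{y_0}\hookrightarrow Y$ are governed by the commensurability results of \cite{FHR14,EKM10}.

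Combining $\lambda|o_{11}|=n^{-a}$, $\lambda|o_{22}|=m^{-b}$, and $|o_{11}|=|o_{22}|$ yields $n^{-a}=m^{-b}$. In the multiplicatively independent case $\log n/\log m\notin\Q$, this forces $a=b=0$, whence $|o_{11}|=1/\lambda\geq 1$; combined with $o_{11}^2+o_{12}^2=1$ this forces $o_{12}=0$ and $\lambda=1$, contradicting both obliqueness and the assumption $\lambda<1$. In the dependent case $n^p=m^q$, the relation admits solutions $(a,b)=(rp,rq)$ for $r\geq 0$, and a finer analysis becomes necessary: one would use the Pythagorean identity $\lambda^2=n^{-2a}+(\lambda|o_{12}|)^2$ together with the extra constraint on $\lambda|o_{12}|$ coming from the cross-base embedding to exclude each remaining configuration.

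\textbf{Main obstacle.} The delicate step is this closing analysis in the dependent case, where the algebraic constraints admit a family of potential solutions (including, a priori, the $45^\circ$-rotation configuration $|o_{11}|=|o_{12}|=1/\sqrt{2}$) that must be ruled out by invoking the strict inequality $n>m$ and, most likely, further iteration or a cylinder-level geometric input that distinguishes Bedford-McMullen carpets from generalized Sierpi\'nski carpets (which, as shown later in the paper, can genuinely admit oblique self-embeddings). A secondary technical hurdle is the slicing lemma: one must exploit non-degeneracy combinatorially to produce periodic addresses whose fibers are genuine non-trivial self-similar Cantor sets satisfying the open set condition, and not merely singletons or intervals.
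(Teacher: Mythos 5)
Your proposal, which you candidly flag as an outline with a ``main obstacle'' still open, has three genuine gaps that prevent it from becoming a proof, and the closing argument you would need bears little resemblance to what the paper actually does.

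First, the ``secondary technical hurdle'' is not secondary: non-degeneracy of $K$ (i.e.\ $\#I\geq2$ and $\#J\geq2$) does \emph{not} guarantee any row or column of $\Lambda$ with two or more selected cells. For example $\Lambda=\{(0,0),(1,1)\}$ with $n=3,m=2$ gives a non-degenerate carpet where every horizontal and every vertical fiber is a single point, so there is nothing to which Feng--Wang can be applied. The paper treats exactly this situation ($N=1$) separately, via Proposition~\ref{prop:projofkisinf} and the rescaling Lemma~\ref{lem:dep1}, and your plan has no substitute. Second, even when non-trivial fibers exist, the map $K^{y_0}\hookrightarrow X$ is an embedding $E(n,I_j)\hookrightarrow E(n,I)$ of one deleted-digit set into a \emph{different} one; \cite{FW09} governs self-embeddings $f(E)\subset E$, so its invocation here needs an extension you have not supplied (the paper's Proposition~\ref{prop:allnthenrational} is such an extension, but only after nontrivial Hausdorff-measure-positivity input). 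The cross-base embeddings are yet more delicate. Third, the $\lambda=1$ case is waved away: it is true that a self-isometry of a compact set is onto, but the assertion that the isometry group of $\conv(K)$ ``contains no oblique element'' is unjustified and in general false for convex polygons, so this branch is also incomplete.

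Most importantly, the dependent-ratio endgame you describe --- reconciling the commensurability relations with the Pythagorean identity to exclude, e.g., the $45^\circ$ configuration --- is precisely where the real difficulty lies, and the algebra you set up does not in itself produce a contradiction (the relations are mutually consistent when $n^p=m^q$). The paper avoids this impasse altogether: it splits on whether there is a vacant row, a full row, or neither (Cases 1--3 of Theorem~\ref{thm:notoblique}), handles the full-row case by showing $K$ contains no oblique segment (Proposition~\ref{prop:noobliquelines}), handles the no-vacant-row case by a rescaling argument that produces a thin parallelogram whose long side must pass through a forbidden hole, and handles the vacant-row dependent case by renormalizing $f^t$ to land in a product $C\times\R$ (Lemma~\ref{lem:dep1}) and then applying the commensurability machinery at the slice level. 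None of these case distinctions or geometric rescaling arguments appear in your outline, and the gaps you identify are not ones you can close by fiber-projection alone.
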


We remark that the non-obliqueness statement is closely related to a slicing problem for Bedford-McMullen carpets. For example, writing $N$ to be the maximal number of rectangles selected in any row of the initial pattern (see Section~\ref{sec:pre} for a more precise definition), it is easy to find some $0\leq y\leq 1$ such that $\dimh (K\cap (\R\times\{y\}))=\frac{\log N}{\log n}$, where $\dimh$ denotes the Hausdorff dimension. But an oblique self-embedding similitude of $K$ sends this horizontal slice into an oblique one, and if it could be shown that any oblique slice of $K$ has Hausdorff dimension strictly less than $\frac{\log N}{\log n}$, then the existence of oblique self-embeddings would be ruled out. Thus a natural question is: under what conditions does such an upper bound for oblique slices hold? By Marstrand's slicing theorem~\cite{Mar54}, for any oblique line $L$ in $\R^2$, 
\begin{equation}\label{eq:marstrand}
    \dimh (K\cap (L+a)) \leq \max\{\dimh K-1,0\} \quad \text{for a.e. } a\in L^\perp.
\end{equation}
In the case when $K$ has non-uniform horizontal fibres (that is, there are two rows containing different numbers of selected rectangles), a straightforward calculation using the formula for $\dimh K$ (see \eqref{eq:dimhk}) yields
\[
    \dimh K-1 < \frac{1}{\log m}\cdot\log \Big( m\cdot N^\frac{\log m}{\log n} \Big)-1 = \frac{\log N}{\log n}.
\]
So almost all oblique slices of $K$ have Hausdorff dimension strictly less than $\frac{\log N}{\log n}$.
When $\frac{\log n}{\log m}\notin\Q$, it is possible that the upper bound specified in~\eqref{eq:marstrand} holds for all oblique slices rather than merely almost all, which would lead to a quick proof for certain special cases of Theorem~\ref{thm:main1}. This slicing problem is formally stated in the nice survey~\cite{Fra21} and appears to be quite challenging. Recent progress can be found in~\cite{Alg20,AW23}. However, in the dependent case the upper bound in~\eqref{eq:marstrand} may not hold, as indicated by~\cite{BR14}. Moreover, even when the bound holds for Hausdorff dimension, other notions of dimension (such as the Assouad dimension) may behave differently, a point emphasized in~\cite{AW23}.

On the other hand, one cannot hope to extend the isometry statement in Theorem~\ref{thm:AH19} to the dependent case. For example, when $n=m^2$, it is easy to construct a Bedford-McMullen carpet which is actually a self-similar carpet. In such instances, there exist numerous non-isometric self-embedding similitudes. Nevertheless, a logarithmic commensurability type conclusion as follows still holds.

\begin{theorem}\label{thm:main2}
    Let $K=K(n,m,\Lambda)$ be a Bedford-McMullen carpet with $n>m, \frac{\log n}{\log m}\in\Q$ and let $f$ be a self-embedding similitude of $K$. If $K$ is not contained in any line, then $\frac{\log\lambda}{\log n}\in\Q$, where $\lambda$ denotes the similarity ratio of $f$.
\end{theorem}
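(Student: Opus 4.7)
The plan is to combine Theorem~\ref{thm:main1} with the Feng--Wang rigidity theorem~\cite{FW09} for one-dimensional self-similar sets, applied either to a coordinate projection or to a horizontal slice of $K$. By Theorem~\ref{thm:main1}, $f$ is non-oblique, so its orthogonal part is diagonal up to a coordinate swap; replacing $f$ by $f^2$ absorbs any such swap and all sign choices without affecting the rationality of $\log\lambda/\log n$, so I may assume $f(x,y)=(\lambda x + a,\,\lambda y + b)$. The projection $\pi_1(K)$ is then the attractor of $\{x\mapsto(x+i)/n : i\in\pi_1(\Lambda)\}$, self-similar with ratio $1/n$ and automatic OSC. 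If $\pi_1(\Lambda)\subsetneq\{0,\dots,n-1\}$ then $\pi_1(K)\subsetneq[0,1]$, and the induced embedding $\lambda\pi_1(K)+a\subset\pi_1(K)$ immediately yields $\log\lambda/\log n\in\Q$ by Feng--Wang. Symmetrically, if $\pi_2(\Lambda)\subsetneq\{0,\dots,m-1\}$, Feng--Wang applied to $\pi_2(K)$ gives $\log\lambda/\log m\in\Q$, and the standing hypothesis $\log n/\log m\in\Q$ transfers this to $\log\lambda/\log n\in\Q$.

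The principal remaining case is $\pi_1(\Lambda)=\{0,\dots,n-1\}$ and $\pi_2(\Lambda)=\{0,\dots,m-1\}$. Since $\#\Lambda<nm$, at least one row digit set $N_{j^*}:=\{i : (i,j^*)\in\Lambda\}$ is a proper non-empty subset of $\{0,\dots,n-1\}$. I would pick $y^*\in[0,1]$ whose base-$m$ expansion is the constant word $\overline{j^*}$ (take $y^*=j^*/(m-1)$ for $j^*<m-1$ and $y^*=1$ otherwise), so that the slice $\widetilde K_{y^*}:=\{x : (x,y^*)\in K\}$ is the self-similar Cantor set generated by $\{x\mapsto(x+i)/n : i\in N_{j^*}\}$---a proper set, ratio $1/n$, with OSC. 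Applying $f$ gives $\lambda\widetilde K_{y^*}+a\subset\widetilde K_{y^{**}}$ with $y^{**}:=\lambda y^*+b$, where $\widetilde K_{y^{**}}$ is in general only a Moran set whose digits are drawn from the finite alphabet $\{N_0,\dots,N_{m-1}\}$ at the common contraction $1/n$. The finishing step will be to extend Feng--Wang rigidity to this ``self-similar-into-Moran'' embedding---for instance, by iterating $f$ so that the copies $\lambda^k\widetilde K_{y^*}+a_k$ accumulate near the fixed point $b/(1-\lambda)$ of the vertical dynamics, and by matching their scales against the $n$-adic lattice in $\widetilde K_{b/(1-\lambda)}$ to exclude irrational values of $\log\lambda/\log n$.

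The hard part will be precisely this last extension of Feng--Wang. The projection-based cases and the reduction via Theorem~\ref{thm:main1} are mechanical, but the genuine new content is to show that the apparent freedom in the base-$m$ expansion of $\lambda y^*+b$ cannot mask an irrational $\log\lambda/\log n$; this demands a rigidity argument beyond the classical self-similar-target setting, and is what I expect to be the substantive technical step in the author's proof.
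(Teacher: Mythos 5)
Your reduction to the non-oblique case via Theorem~\ref{thm:main1}, and your projection-based treatment of the two easy cases ($\#I<n$, or $\#J<m$ with transfer through $\log n/\log m\in\Q$), are correct and broadly parallel what the paper does for its $N=1$ case. The difference in case split is cosmetic. But your Case~3 is not a proof --- you explicitly leave the ``self-similar-into-Moran'' rigidity as the ``substantive technical step,'' and that is exactly where the real work lies. The paper supplies this missing lemma as Proposition~\ref{prop:allnthenrational}, a genuinely nontrivial extension of Feng--Wang that works for slices $K^y$ satisfying $0<\h^\alpha(K^y)<\infty$ with $\alpha=\log N/\log n$; its proof is a careful two-case gap-and-mass comparison, not something one gets for free.

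Beyond the acknowledged gap, your sketched route to it has a structural obstacle that the paper circumvents with extra machinery. You propose applying $f$ directly to a maximal slice $\widetilde K_{y^*}$ and ``matching scales against the $n$-adic lattice in $\widetilde K_{b/(1-\lambda)}$.'' Two problems: (i) applying $f$ once gives $\lambda\widetilde K_{y^*}+a\subset\widetilde K_{y^{**}}$ with a \emph{different} target, which is not a self-embedding, and iterating makes the copies shrink to a point; (ii) even if you focus on the fixed-point slice $K^{y_0}$ with $y_0=b/(1-\lambda)$ (which does self-embed), there is no guarantee that $K^{y_0}$ has positive finite $\h^\alpha$-measure --- its $m$-ary digits could have $\#I_{y_{0,k}}<N$ infinitely often, in which case $\h^\alpha(K^{y_0})=0$ and no Feng--Wang-type comparison can start. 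The paper's Lemma~\ref{lem:dep1}, Remark~\ref{rem:bairetofindint}, and the ensuing argument in Theorem~\ref{thm:dependentaffine} are designed precisely to avoid this: the rescaling/Baire construction produces a single target slice $K^z$ which simultaneously (a) contains a similar copy of a maximal deleted-digit slice $E(n,I_{j_*})$, forcing $\h^\alpha(K^z)>0$, and (b) self-embeds under a map whose ratio is a known multiple of the one you want to control. Both ingredients are needed to invoke Proposition~\ref{prop:allnthenrational}, and both are absent from your direct approach. The paper's proof of Theorem~\ref{thm:main2} then adds one more twist: it exploits the freedom in choosing the accumulation $g$ (Remark~\ref{rem:bairetofindint}) to force $\rho$ to be an irrational power of $n$ whenever $\log\lambda/\log n\notin\Q$, so that \eqref{eq:thenewlogformula} delivers the contradiction. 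Your proposal does not reach any of this.
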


The statement was implicitly shown in the work of Algom and Hochman~\cite{AH19}, although their theorem is stated for the case where $n$ and $m$ are independent. Roughly speaking, once it is proven that any self-embedding of the carpet must be non-oblique, the structure of the tangent sets allows them to reduce the problem to that of deleted-digit sets. The desired commensurability then follows directly from the aforementioned theorem of Feng and Wang (see Lemma~\ref{lem:fw09}). In the present paper, we provide an alternative proof of this result that avoids analyzing the tangent sets (see Section~\ref{subsec:proofofthecom}).

Finally, we consider the self-similar case when $n=m$, which can be viewed as the ``most dependent'' case. While one might anticipate a non-oblique statement in this context, it is not hard to construct a generalized Sierpi\'nski carpet which is symmetric with respect to an appropriate oblique line, thus permitting an oblique reflectional self-embedding. See Example~\ref{exa:nonobsiercat}. So the non-oblique statement fails in the self-similar setting. However, considering rotational self-embeddings in lieu of reflectional ones and under the assumption of the strong separation condition, we indeed eliminate almost all possibilities. Here a generalized Sierpi\'nski carpet $K=K(n,\Lambda)$ is said to satisfy the \emph{strong separation condition} if elements in $\{\varphi_{i,j}(K)\}_{(i,j)\in\Lambda}$ are mutually disjoint. 

\begin{theorem}\label{thm:main3}
    Let $K=K(n,\Lambda)$ be a generalized Sierpi\'nski carpet satisfying the strong separation condition. Let $\theta\in\R$ and let $R_\theta$ be the counterclockwise rotation at the origin by angle $\theta$. If $f(x)=\lambda R_\theta x+a$ is an oblique similitude sending $K$ into itself, then $|\tan\theta|=1$.
\end{theorem}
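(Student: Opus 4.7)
The plan is to derive two complementary rationality constraints on $\theta$ -- namely $\tan\theta\in\Q$ and $\theta/\pi\in\Q$ -- and then combine them via Niven's theorem. First, by~\cite{EKM10} applied to $f$ (using SSC on $K$), the contraction ratio satisfies $\lambda=n^{-p/q}$ for positive integers $p,q$. Projecting the inclusion $f(K)\subset K$ orthogonally onto the $x$-axis yields
\[
    \lambda\,\pi_v(K)+c\subset \pi_x(K),\qquad v=(\cos\theta,-\sin\theta),
\]
with $c$ the $x$-coordinate of $a$. Both $\pi_v(K)$ and $\pi_x(K)$ are self-similar subsets of $\R$ with common ratio $1/n$; their translation sets are $T_v=\{i\cos\theta-j\sin\theta:(i,j)\in\Lambda\}$ and $D_x=\pi_x(\Lambda)\subseteq\{0,\ldots,n-1\}$. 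I would apply an extension of the logarithmic commensurability theorem of Feng and Wang~\cite{FW09} -- in the spirit of the one developed to prove Theorem~\ref{thm:main2}, adapted to a 1D embedding of one self-similar set into a (possibly different) other -- to force $T_v\subset\xi\Q$ for some $\xi\in\R\setminus\{0\}$. Since $K$ is not supported on a line, $\Lambda$ contains $(i_1,j_1),(i_2,j_2)$ with $i_1j_2-i_2j_1\ne 0$, and rational commensurability of the two corresponding elements of $T_v$ then yields $\tan\theta\in\Q$.

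To establish $\theta/\pi\in\Q$, I would iterate and rescale using SSC. For $k=qN$, $f^{qN}(K)\subset K$ has rotation $R_{qN\theta}$, ratio $n^{-pN}$, and diameter $n^{-pN}\diam K$. The SSC gap between cylinders at level $|\mathbf{i}|$ has order $n^{-|\mathbf{i}|}$, so for all large $N$ the set $f^{qN}(K)$ fits inside a single cylinder $\varphi_{\mathbf{i}_N}(K)$ with $|\mathbf{i}_N|=pN-L_N$ and $L_N$ in a bounded set $\{0,\ldots,L_0\}$ (where $L_0$ depends only on $\diam K$, the SSC gap, and $n$). The rescaled map $g_N:=\varphi_{\mathbf{i}_N}^{-1}\circ f^{qN}$ is then a similitude of $\R^2$ with rotation $R_{qN\theta}$ and ratio $n^{-L_N}$ mapping $K$ into $K$. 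Passing to a subsequence on which $L_N$ is constant and $g_N$ Hausdorff-converges, I obtain a limit self-embedding of $K$ at any accumulation angle $\phi^*$ of $\{qN\theta\bmod 2\pi\}$, with contraction ratio bounded uniformly below. A discreteness principle for such self-embeddings of an SSC self-similar set (a consequence, e.g., of further Feng-Wang-type input together with the classical finiteness of the isometry group of an SSC self-similar set) then forces this set of accumulation angles to be finite, whence $\theta\in\pi\Q$.

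With both $\tan\theta\in\Q$ and $\theta/\pi\in\Q$ in hand, Niven's theorem (in its version for the tangent of rational multiples of $\pi$) gives $\tan\theta\in\{0,\pm 1\}\cup\{\pm\infty\}$. Since $f$ is oblique, $\theta\not\equiv 0\pmod{\pi/2}$, ruling out $\tan\theta\in\{0,\pm\infty\}$ and leaving $|\tan\theta|=1$.

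The principal obstacle is the extension of Feng-Wang required in the first step. Classical Feng-Wang concerns self-embeddings of a single self-similar set, whereas here source and target are genuinely different: $\pi_v(K)$ has a typically irrational, $\#\Lambda$-element translation set, while $\pi_x(K)$ has an integer one in $\{0,\ldots,n-1\}$. Moreover $\pi_v(K)$ need not inherit OSC from $K$, so one cannot directly invoke standard self-similar-set machinery on it. Adapting the technique developed for Theorem~\ref{thm:main2} to this asymmetric 1D setting -- and verifying that it genuinely yields rational commensurability of $T_v$ rather than a weaker structural relation -- is the crux of the argument.
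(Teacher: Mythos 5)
Your overall scaffold is the same as the paper's: show $\tan\theta\in\Q$ and $\theta/\pi\in\Q$ and then finish with Niven's theorem. But the two supporting steps diverge significantly, and the crucial one has a genuine gap that you yourself flag. For $\theta/\pi\in\Q$, you build a compactness-plus-rescaling argument and then invoke a "discreteness principle" whose content is exactly what you need to prove; the paper simply cites \cite[Theorem 4.9]{EKM10}, which says directly that under the strong separation condition the set $\{O^t\}_{t\geq1}$ of powers of the orthogonal part of a self-embedding similitude is finite. That gives $\theta/\pi\in\Q$ in one line and is what you should use.

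The real problem is $\tan\theta\in\Q$. Your plan is to project the inclusion $f(K)\subset K$ onto the $x$-axis and invoke a hypothetical Feng--Wang-type commensurability theorem for an embedding of the self-similar set $\pi_v(K)$ into the (different) deleted-digit set $\pi_x(K)$, hoping it yields $T_v\subset\xi\Q$. This does not work as stated: Feng--Wang-type results give logarithmic commensurability of \emph{contraction ratios}, not rationality of \emph{translation sets}, so even a faithful extension of that theorem would not deliver the conclusion you want; moreover $\pi_v(K)$ need not satisfy any separation condition, can have overlaps, and may even be a full interval (if the projection dimension saturates), in which case the projected inclusion carries no information at all. The paper avoids this entirely by working with the convex hull $P$ of $K$: every vertex $v_t$ of $P$ is a fixed point $(\tfrac{i_t}{n-1},\tfrac{j_t}{n-1})$ of some $\varphi_{i_t,j_t}$ (Lemma~\ref{lem:convandfixpt}), hence lies on the rational lattice $\tfrac{1}{n-1}\Z^2$, forcing every interior angle to have rational tangent (Corollary~\ref{cor:rationaltangent}); then Proposition~\ref{prop:rationalangles}, using the strong separation condition, matches an edge of a small copy of $P$ inside $f(K)$ with an edge of a small copy of $P$ inside $K$, so that $\theta$ is realized (mod $2\pi$) as a sum of exterior angles $\pi-\alpha_t$, giving $\tan\theta\in\Q$. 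You would need to replace your projection step with this (or some equivalent) concrete geometric rigidity argument; as written, the crux of your proof is missing.
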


We outline here some rough ideas on how to prove the above results. In contrast to Algom and Hochman~\cite{AH19}, we work with the hierarchical grid structure behind the carpet directly and study the tangents of the embedding image rather
than of the carpet. One of the key tools we use is a new logarithmic commensurability theorem for non-stationary deleted-digit sets (where digit sets vary by level); see Proposition~\ref{prop:allnthenrational}. 

For Theorem~\ref{thm:main1}, we track how those $(m\times n)$-adic rectangles are transformed under an oblique self-embedding similitude of $K$. Assume that such an embedding exists. We first show that $K$ contains no oblique line segments, which reduces to the case when no row in the initial pattern is full. If every row is non-empty, a geometric flattening argument shows that the oblique image would have to enter a forbidden region, leading to a contradiction. In the remaining case, we construct another oblique similitude that embeds the carpet to a fibred product set $C\times\R$. Using the aforementioned commensurability result together with a careful analysis of boundary points with matching vertical coordinates, we show that the oblique image near such points would protrude horizontally beyond $C$, again yielding a contradiction.

For Theorem~\ref{thm:main2}, it suffices to consider homotheties. We show that such an embedding should send a scaled copy of some maximal horizontal slice into itself, and the desired conclusion follows directly from our refined version of the commensurability theorem.

Theorem~\ref{thm:main3} is achieved as follows. Note that the convex hull of any non-degenerate Sierpi\'nski carpet is a polygon. On the one hand, we show that if there exists an oblique rotational $f$, then the rotational angle is a rational multiple of $\pi$, and simultaneously can be expressed as the sum of finitely many inner angles (modulus $2\pi$) of the above polygon. On the other hand, we prove that all of these inner angles have rational tangents. So the rotation angle has a rational tangent, and the desired conclusion follows directly from a theorem of Niven (see Lemma~\ref{lem:niven}). Here the strong separation condition plays a crucial role in deriving the logarithmic commensurability of the contraction ratio of $f$ (by a result in~\cite{EKM10}). In fact, by employing some mass comparison arguments, the separation condition in Theorem~\ref{thm:main3} could be relaxed to the requirement of total disconnectedness; however, this does not present a substantial improvement and we will not explore it further within the scope of this paper. 

The paper is organized as follows. In the subsequent section, we introduce general notation and provide several useful observations on Bedford-McMullen carpets and horizontal slices. In Section~\ref{sec:logcom}, we establish a logarithmic commensurability theorem for non-stationary deleted-digit sets. Section~\ref{sec:indep} presents a new proof of the non-obliqueness statement for the independent case. Section~\ref{sec:dep} is devoted to the proofs of Theorems~\ref{thm:main1} and~\ref{thm:main2}. Finally, we establish Theorem~\ref{thm:main3} in Section~\ref{sec:selfsim}.

%-----------------------------
\section{Preliminaries}\label{sec:pre}

%--------------------------
\subsection{Basic notation}
Throughout this paper, we write $\mathcal{S}(\R^d)$ to be the collection of all similitudes on $\R^d$ with positive similarity ratio. For a collection $\mathcal{A}$ of subsets of $\R^d$,  $\bigcup\mathcal{A}$ denotes the union of sets in $\mathcal{A}$. For $A\subset\R^d$ and $x\in\R^d$, we denote by $A+x$ (or $x+A$) the translated copy of $A$ by $x$, that is, $A+x:=\{a+x: a\in A\}$. We also write $|A|:=\sup\{|x-y|:x,y\in A\}$ as the diameter of $A$.

For any oblique linear line $L\subset\R^2$, $\pi_L$ denotes the orthogonal projection onto $L$. Meanwhile, we write $\pi_1$ and $\pi_2$ to be the orthogonal projections onto the $x$-axis and $y$-axis, respectively. The concatenation $fg$ of any two mappings $f,g$ simply means the composition $f\circ g$. 

For integers $n,m\geq 2$, we write $n\sim m$ if $\frac{\log n}{\log m}\in\Q$ and $n\nsim m$ otherwise. For each $k\geq 1$, intervals of the form $[\frac{i}{n^k},\frac{i+1}{n^k}]$ (resp. $[\frac{i}{m^k},\frac{i+1}{m^k}]$) will be referred to as \emph{$n^k$-adic intervals} (resp. \emph{$m^k$-adic intervals}), where $0\leq i\leq n^k-1$.

%------------------------------------
\subsection{Bedford-McMullen carpets: general observations}

A Bedford-McMullen carpet $K=K(n,m,\Lambda)$ can be constructed through a standard iterative process as follows. One first divides the unit square $[0,1]^2$ into an $n\times m$ grid, selecting a subset of rectangles formed by the grid (called the \emph{initial pattern}) and then repeatedly substituting the initial pattern on each of the selected rectangles. The limit set is just $K$. Recalling the notation in~\eqref{eq:varphiij}, for $k\geq 1$ we call every element in 
\[
    \{\varphi_{i_1,j_1}\circ\cdots\circ\varphi_{i_k,j_k}([0,1]^2): (i_1,j_1),\ldots,(i_k,j_k)\in\Lambda\}
\]
a \emph{level-$k$ rectangle}. For any level-$k$ rectangle $R$, we write $\varphi_R$ to be the natural affine map sending $[0,1]^2$ onto $R$.

Below we present two geometric observations regarding Bedford-McMullen carpets, which are significantly influenced by the condition $n>m$ and might be of independent interest.

\begin{proposition}\label{prop:projofkisinf}
    Let $K=K(n,m,\Lambda)$ be a Bedford-McMullen carpet with $n>m$. If there is a pair of selected rectangles in the initial pattern lying in different rows, then $\#\pi_L(K)=\infty$ for every oblique linear line $L$.
\end{proposition}
\begin{proof}
    Up to an affine transformation, it suffices to prove that 
    \[
        \#\{x\cos\theta+y\sin\theta: (x,y)\in K\} = \infty
    \]
    for every $\theta\in(0,\pi)\setminus\{\pi/2\}$. Note that $\cos\theta$ and $\sin\theta$ are both nonzero. Since there are two selected rectangles lying in different rows, we can find $(i_*,j_*),(i^*,j^*)\in\Lambda$ with $j_*\neq j^*$.

    Let $p\geq 1$. Pick a large $k$ so that $\frac{m^{k+p}}{n^k}<|\frac{\sin\theta}{\cos\theta}|$ (here we use $n>m$) and an arbitrary level-$k$ rectangle $R$. By definition, $\varphi_R$ is of the form $(x,y)\mapsto (\frac{x}{n^k},\frac{y}{m^k})+(c_1,c_2)$ for some $0\leq c_1,c_2\leq 1$. Fix any point $(x_0,y_0)\in K$ and consider the set
    \[
        E :=\{\varphi_{i_1,j_1}\cdots\varphi_{i_p,j_p}(x_0,y_0): (i_t,j_t)\in \{(i_*,j_*),(i^*,j^*)\} \text{ for } 1\leq t\leq p\}.
    \]
    Since the maps $\varphi_{i,j}$ are injective and the two choices of $\{(i_t, j_t)\}_t$ produce distinct compositions, $E$ consists of exactly $2^p$ distinct points.

    Now consider $\varphi_R(E)$. For any two distinct points in $\varphi_R(E)$, say 
    \[
        \left\{\begin{array}{ll} (a,b) = \varphi_R\varphi_{i_1,j_1}\cdots\varphi_{i_p,j_p}(x_0,y_0), \\ (a',b') = \varphi_R\varphi_{i'_1,j'_1}\cdots\varphi_{i'_p,j'_p}(x_0,y_0). \end{array}\right.
    \]
    we compute that 
    \begin{align*}
        |b-b'| &= m^{-k}\Big|\Big( m^{-p}y_0+ \sum_{t=1}^p j_tm^{-t} \Big) - \Big( m^{-p}y_0+ \sum_{t=1}^p j'_tm^{-t} \Big)\Big| \\
        &= m^{-k}\Big|\sum_{t=1}^p (j_t-j'_t)m^{-t} \Big|,
    \end{align*}
    which is at least $m^{-k-p}$ unless $(j_1,\ldots,j_p)=(j'_1,\ldots,j'_p)$. Since $j_*\neq j^*$, this occurs precisely when $(i_t,j_t)=(i'_t,j'_t)$ for $1\leq t\leq p$. Thus the $y$-coordinates of points in $\varphi_R(E)$ are $m^{-k-p}$-separated.
    
    It follows that $\{a\cos\theta+b\sin\theta:(a,b)\in\varphi_R(E)\}$ contains exactly $\# \varphi_R(E)=2^p$ real numbers: otherwise, there are distinct $(a,b),(a',b')\in\varphi_R(E)$ with $a\cos\theta+b\sin\theta=a'\cos\theta+b'\sin\theta$, which implies that 
    \begin{align*}
        \frac{|\sin\theta|}{|\cos\theta|} &= \frac{|a-a'|}{|b-b'|}  \\
        &\leq \frac{n^{-k}}{|b-b'|} && \text{(since $\varphi_R(E)\subset R$)} \\
        &\leq \frac{n^{-k}}{m^{-k-p}} && \text{(since $|b-b'|\geq m^{-k-p}$)} \\
        &= \frac{m^{k+p}}{n^k} < \frac{|\sin\theta|}{|\cos\theta|}, && \text{(by our choice of $k$)}
    \end{align*}
    a contradiction. 
    
    Since $(x_0,y_0)\in K$, $E\subset K$ and thus $\varphi_R(E)\subset K$. In particular, 
    \[
        \#\{x\cos\theta+y\sin\theta: (x,y)\in K\} \geq \#\{a\cos\theta+b\sin\theta:(a,b)\in\varphi_R(E)\} \geq 2^p.
    \]
    The proof is then completed because $p$ can be taken arbitrarily large.
\end{proof}

\begin{proposition}\label{prop:noobliquelines}
    Let $K=K(n,m,\Lambda)$ be a Bedford-McMullen carpet with $n>m$. Then $K$ does not contain any oblique line segment of positive length.
\end{proposition}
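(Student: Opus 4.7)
The plan is to argue by contradiction. Suppose $K$ contains an oblique segment $S$ of positive length $\ell$ and slope $s=\tan\theta\neq 0$; by replacing $K$ with a reflection if necessary we may assume $s>0$ and write $S=\{(x,sx+c):x\in[x_1,x_2]\}$. The central tool is the following self-affine rescaling: whenever $R$ is a level-$k$ rectangle meeting $S$ in a subsegment of positive length, $R$ is necessarily selected (otherwise $K\cap R^\circ=\emptyset$), and $\varphi_R^{-1}(S\cap R)$ is a segment contained in $K$ of slope $s(m/n)^k$. Since $n>m$, these slopes tend to zero as $k\to\infty$, so the rescaled subsegments become arbitrarily close to horizontal.

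First I would fix $k$ so large that $s(m/n)^k<1/(4m)$. For such $k$, apart from $O(1)$ exceptional cells near the endpoints of $S$ or near transitions between different level-$k$ rows, each level-$k$ grid cell met by $S$ is crossed horizontally from its left edge to its right edge. For a horizontally crossed rectangle $R=[a,a+n^{-k}]\times[b,b+m^{-k}]$, set $\alpha_R:=m^k(sa+c-b)\in[0,1]$, the rescaled vertical coordinate at which $S$ enters $R$; a direct computation identifies $\varphi_R^{-1}(S\cap R)$ with the segment joining $(0,\alpha_R)$ to $(1,\alpha_R+s(m/n)^k)$ inside $[0,1]^2$.

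The next step is to show that the values $\{\alpha_R\}$ densely populate $[0,1]$. Moving from $R$ to the next horizontally crossed rectangle along $S$ changes $\alpha_R$ by exactly $s(m/n)^k$ modulo $1$; already within a single level-$k$ row, the resulting arithmetic progression makes one full pass through $[0,1]$, so every sub-interval of length greater than $s(m/n)^k$ contains some $\alpha_R$. In particular, for each $j\in\{0,1,\ldots,m-1\}$ one can locate $R_j$ with $\alpha_{R_j}\in\bigl(\frac{4j+1}{4m},\frac{4j+3}{4m}\bigr)$; then both $\alpha_{R_j}$ and $\alpha_{R_j}+s(m/n)^k$ lie strictly inside $(j/m,(j+1)/m)$, so $\varphi_{R_j}^{-1}(S\cap R_j)\subset K$ passes through the interior of every level-$1$ cell $[i/n,(i+1)/n]\times[j/m,(j+1)/m]$ for $i=0,\ldots,n-1$.

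Since $K$ is disjoint from the interior of each unselected level-$1$ cell $\varphi_{i,j}([0,1]^2)$ with $(i,j)\notin\Lambda$, this forces $(i,j)\in\Lambda$ for every $i\in\{0,\ldots,n-1\}$, i.e., row $j$ is fully selected in $\Lambda$. Letting $j$ range over $\{0,1,\ldots,m-1\}$ yields $\Lambda=\{0,\ldots,n-1\}\times\{0,\ldots,m-1\}$ and hence $\#\Lambda=mn$, contradicting the standing assumption $\#\Lambda<mn$. The main technical obstacle I anticipate is the careful bookkeeping around row transitions and endpoint cells of $S$, but these contribute only $O(1)$ anomalous cells and do not affect the density argument for $\{\alpha_R\}$ once $k$ is sufficiently large.
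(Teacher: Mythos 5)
Your proof is correct, and it takes a recognizably different route to the contradiction than the paper does, even though both rest on the same initial observation: for a level-$k$ cell $R$ that $S$ crosses horizontally, $\varphi_R^{-1}(S\cap R)$ is a segment in $K$ of slope $s(m/n)^k\to 0$. Where you part ways is in how the ``hole'' is located. The paper fixes a single such $R$, passes to the rescaled nearly-horizontal graph $g$ on $[0,1]$, and then performs a nested subdivision at a much finer level $p+2$: it chooses $p$ so that the vertical increment of $g$ over one column of width $n^{-(p+2)}$ is less than half the height $m^{-(p+3)}$ of the unselected subcell $\varphi_{\widetilde R}(Q)$, and then pigeonholes a sample point of the graph into that subcell. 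Your argument instead keeps $k$ fixed and varies $R$ along $S$ within a fully-traversed level-$k$ row, exploiting that the entry heights $\alpha_R$ form an arithmetic progression in $[0,1]$ with step $s(m/n)^k$. This lets you aim the rescaled segment into the interior of any prescribed level-$1$ strip $(j/m,(j+1)/m)$, whence it sweeps across all $n$ level-$1$ cells of row $j$, forcing $\Lambda=\{0,\dots,n-1\}\times\{0,\dots,m-1\}$ --- contradicting $\#\Lambda<mn$. Your route avoids the two-scale bookkeeping entirely and only ever looks at level-$1$ cells after rescaling; it also proves the ostensibly stronger fact that an oblique segment would force every cell to be selected, whereas the paper is content to hit the one missing cell $(i_0,j_0)$. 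One small quantitative remark: depending on exactly where the progression starts and stops within $[0,1]$, a threshold like $s(m/n)^k<1/(8m)$ (or a touch of extra care near the endpoints of the row) is safer than $1/(4m)$ to guarantee each target interval $(\tfrac{4j+1}{4m},\tfrac{4j+3}{4m})$ actually meets a term; but since one is free to take $k$ as large as desired this does not affect the proof. The handling of the boundary/endpoint exceptional cells and the observation that a crossed cell must be selected (because $K\cap R^\circ\neq\emptyset$ forces $R$ to be a selected level-$k$ rectangle) are both correct.
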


The geometric idea behind the proof is as follows. Since not every rectangle is selected in the initial pattern, the construction results in holes at every scale throughout the carpet. Consequently, attempting to put an oblique segment inside the carpet faces the challenge of anisotropic stretching---where the horizontal and vertical directions shrink at distinct rates. Over successive iterations, this stretching significantly flattens the segment, inevitably making it pass through one of those holes. It is worth noting that the statement fails when $n=m$. For example, when $\Lambda=\{(i,i):0\leq i\leq n-1\}$, the carpet is nothing but a diagonal of the unit square. 

Before proceeding to the proof, let us mention some related results. In the case when $n\nsim m$, Algom~\cite{Alg20} even found a constant $0<c<1$ (depending on $K$) such that every oblique slice of $K$ has Assouad dimension at most $1-c$. In particular, $K$ is unable to contain an entire oblique segment. On the other hand, as shown by Algom and Wu~\cite{AW25}, certain oblique slices can still exhibit significant dimensionality under some mild conditions.

\begin{proof}
    Since we always assume that $\#\Lambda<mn$, there are $0\leq i_0\leq n-1,0\leq j_0\leq m-1$ such that $(i_0,j_0)\notin\Lambda$. Let $Q:=(\frac{i_0}{n},\frac{i_0+1}{n})\times(\frac{j_0}{m},\frac{j_0+1}{m})$. Since $(i_0,j_0)\notin\Lambda$, $Q$ is the open rectangle corresponding to that missing cell and thus $Q\cap K=\varnothing$.
    
    Suppose $K$ contains an oblique line segment $\ell$ of length $|\ell|>0$. Let $u$ denote the slope of $\ell$; without loss of generality, we may assume that $u>0$. The goal is to show that this results in $Q\cap K\neq\varnothing$, leading to a contradiction.
    
    Fix an integer $k$ so large that $n^{-k}\leq \frac{|\pi_1(\ell)|}{3n}$ and $\frac{m^ku}{n^k}<\frac{1}{2}$. We claim that there exists a level-$k$ rectangle $R$ such that $\ell$ meets both of the left and right sides of $R$. To see this, let $(a_1,b_1)$ be the left endpoint of $\ell$. Define $i_*:=\min\{0\leq i\leq n^k: \frac{i}{n_k}\geq a_1\}$ and $y_0$ be such that $(\frac{i_*}{n^k},y_0)\in\ell$. Then define recursively that 
    \[
        y_t := y_{t-1}+\frac{u}{n^k}, \quad 1\leq t\leq \lfloor |\pi_1(\ell)|n^k\rfloor-1,
    \]
    where $\lfloor\cdot\rfloor$ denotes the integer part. So $(\frac{i_*+t}{n^k},y_t)\in\ell$ because $u$ is the slope of $\ell$. Since $|y_t-y_{t-1}|=\frac{u}{n^k}<\frac{1}{2m^k}$, there must exist some $t_0$ such that the interval $[y_{t_0-1},y_{t_0}]$ is entirely contained in an $m^k$-adic interval, say $[\frac{j'}{m^k},\frac{j'+1}{m^k}]$. In particular, 
    \[
        \frac{j'}{m^k} \leq y_{t_0-1}<y_{t_0}\leq\frac{j'+1}{m^k}.
    \]
    Taking 
    \[
        R = \Big[ \frac{i_*+t_0-1}{n^k},\frac{i_*+t_0}{n^k} \Big]\times \Big[ \frac{j'}{m^k},\frac{j'+1}{m^k} \Big],
    \]    
    we see that $\ell$ meets both of the left and right sides of $R$, as desired.
    
    Replacing $\ell$ by its intersection with $R$ if necessary, we may assume that $\ell\subset R$. Consequently, $\ell\subset\varphi_R(K)$ and hence $\varphi_R^{-1}(\ell)\subset K$. Note that $\varphi_{R}^{-1}(\ell)$ has slope $\frac{m^ku}{n^k}$. Let $(0,a)$ be the intersection point of the segment $\varphi_{R}^{-1}(\ell)$ with the left side of $\varphi_{R}^{-1}(R)=[0,1]^2$; then $\varphi_R^{-1}(\ell)$ meets the right side of $[0,1]^2$ at $(1,a+\frac{m^ku}{n^k})$. For convenience, we define 
    \[
        g(x):=\frac{m^ku}{n^k}x+a
    \]
    to be the linear function whose graph over $[0,1]$ concides with the segment $\varphi_{R}^{-1}(\ell)$.

    Pick a large integer $p$ so that $\frac{1}{m^p}<\frac{m^ku}{n^k}$ and $\frac{1}{n^{p+2}}<\frac{1}{m^{p+3}}$. Since $\frac{m^ku}{n^k}>\frac{1}{m^p}\geq \frac{2}{m^{p+1}}$, the interval $[a,a+\frac{m^ku}{n^k}]$ contains an $m^{p+1}$-adic interval, say $[\frac{j}{m^{p+1}},\frac{j+1}{m^{p+1}}]$. So
    \begin{equation}\label{eq:g0andg1}
        g(0) = a \leq \frac{j}{m^{p+1}} < \frac{j+1}{m^{p+1}} \leq a+\frac{m^ku}{n^k} = g(1).
    \end{equation}
    By our choices of $k$ and $p$,
    \begin{equation}\label{eq:noobliqueline2}
        0 < g\Big( \frac{t+1}{n^{p+2}} \Big)-g\Big( \frac{t}{n^{p+2}} \Big)  = \frac{m^ku}{n^k}\cdot\frac{1}{n^{p+2}}<\frac{1}{2}\cdot\frac{1}{m^{p+3}}, \quad 0\leq t\leq n^{p+2}-1.
    \end{equation}
    Combining this with~\eqref{eq:g0andg1}, we can find an index $0\leq t_0\leq n^{p+2}-1$ such that 
    \begin{equation}\label{eq:noobliqueline4}
        \frac{j}{m^{p+1}}+\frac{j_0}{m^{p+3}} \leq g\Big( \frac{t_0}{n^{p+2}} \Big) < g\Big( \frac{t_0+1}{n^{p+2}} \Big) \leq \frac{j}{m^{p+1}}+\frac{j_0+1}{m^{p+3}}
    \end{equation}
    because the difference between the rightmost and leftmost values is $m^{-(p+3)}$. 
    
    Consider the rectangle
    \[
        \widetilde{R} := \Big[ \frac{t_0}{n^{p+2}},\frac{t_0+1}{n^{p+2}} \Big] \times \Big[ \frac{j}{m^{p+1}},\frac{j}{m^{p+1}}+\frac{1}{m^{p+2}} \Big].
    \]
    By the definition of $\varphi_R$ and $Q$,
    \[
        \varphi_{\widetilde{R}}(Q) = \Big( \frac{t_0}{n^{p+2}}+\frac{i_0}{n^{p+3}},\frac{t_0}{n^{p+2}}+\frac{i_0+1}{n^{p+3}} \Big) \times \Big( \frac{j}{m^{p+1}}+\frac{j_0}{m^{p+3}}, \frac{j}{m^{p+1}}+\frac{j_0+1}{m^{p+3}} \Big).
    \] 
    Inequality~\eqref{eq:noobliqueline4} shows that the graph of $g$ over the interval $[\frac{t_0}{n^{p+2}}+\frac{i_0}{n^{p+3}},\frac{t_0}{n^{p+2}}+\frac{i_0+1}{n^{p+3}}]$ lies entirely inside the open rectangle $\varphi_{\widetilde{R}}(Q)$. Since this graph is a subset of $\varphi_{R}^{-1}(\ell)\subset K$, we have $\varphi_{\tilde{R}}(Q)\cap K\neq\varnothing$. 
    Since $Q$ is open,
    \[
        \varphi_{\widetilde{R}}(Q)\cap K = \varphi_{\widetilde{R}}(Q) \cap \varphi_{\widetilde{R}}(K) = \varphi_{\widetilde{R}}(Q\cap K).
    \]
    Thus $Q\cap K\neq\varnothing$, which is a contradiction.
\end{proof}

%--------------------------------
\subsection{Bedford-McMullen carpets: horizontal slices}
\label{subsec:bmcarpet2}

We turn to horizontal slices of Bedford-McMullen carpets. As will become evident later, these slices contain crucial information that plays a significant role in investigating the embedding problem. For a Bedford-McMullen carpet $K=K(n,m,\Lambda)$, it is convenient to adopt the following notation.

\begin{itemize}
    \item $J:=\{0\leq j\leq m-1: \exists i \text{ s.t. } (i,j)\in\Lambda\}$, which collects the digits of non-empty rows in the initial pattern. 
    \item $I:=\{0\leq i\leq n-1: \exists j \text{ s.t. } (i,j)\in\Lambda\}$, which collects the digits of non-empty columns in the initial pattern.
    \item For $0\leq j\leq m-1$, $I_j:=\{0\leq i\leq n-1: (i,j)\in\Lambda\}$, which collects the digits of selected rectangles in the $j$-th row.
    \item For $0\leq i\leq n-1$, $J_i:=\{0\leq j\leq m-1: (i,j)\in\Lambda\}$, which collects the digits of selected rectangles in the $i$-th column.
    \item $N:=\max_{0\leq j\leq m-1}\#I_j$.
    \item $K^y:=\{x: (x,y)\in K\}$ (horizontal slice).
\end{itemize}

Note that every point $y\in\pi_2(K)$ has an $m$-adic expansion given by $y=\sum_{k=1}^\infty y_km^{-k}$ with $y_k\in J$ for all $k\geq 1$, which is unique except possibly when $y$ belongs to the set of $m$-adic rationals $\bigcup_{k=1}^\infty\frac{\mathbb{Z}}{m^k}\cap(0,1)$. Furthermore, when the expansion is unique, we have
\begin{align}
    K^y &= \Big\{ x: \Big( x,\sum_{k=1}^\infty\frac{y_k}{m^k} \Big) \in K \Big\} \notag \\
    &= \Big\{ x: \exists \{x_k\}_{k} \text{ such that } x=\sum_{k=1}^\infty \frac{x_k}{n^k} \text{ and } (x_k,y_k)\in\Lambda \text{ for all } k  \Big\} \notag\\
    &= \Big\{ \sum_{k=1}^\infty \frac{x_k}{n^k}: x_k\in I_{y_k} \text{ for all } k \Big\}. \label{eq:expressionofky}
\end{align} 
For future use, we also define for $p\geq 1$ that
\begin{equation}\label{eq:kyp}
    K^y_p:= \bigcup\Big\{ [x,x+n^{-p}]: x\in \Big\{ \sum_{k=1}^p \frac{x_k}{n^k}: x_k\in I_{y_k} \text{ for } 1\leq k\leq p \Big\} \Big\}
\end{equation}
and call the interior of every interval in the above union an \emph{$n^p$-adic partition interval} of $K^y_p$. We remark that there is a subtle distinction between  $K^y_p$ and the union of all closed $n^p$-adic intervals that intersect $K^y$, since the intersection may occur only at an endpoint, which may make the union slightly larger than $K^y_p$. It is easy to see that $\{K^y_p\}_{p=1}^\infty$ is decreasing, $\bigcap_{p=1}^\infty K^y_p=K^y$ and 
\begin{equation}\label{eq:dimlbofky}
    \dimh K^y \leq \underline{\dim}_{\textup{B}}\, K^y \leq \liminf_{M\to\infty} \frac{\log (\prod_{k=1}^M \#I_{y_k})}{-\log n^M}\leq\frac{\log N}{\log n},
\end{equation}
where $\underline{\dim}_{\textup{B}}$ denotes the lower box dimension. Furthermore, a standard argument shows that $\h^{\log N/\log n}(K^y)\leq 1<\infty$ for all $y$; we omit the details. Here and afterwards, $\h^s$ denotes the $s$-dimensional Hausdorff measure. For a comprehensive treatment of dimensions and measures for sets defined by digit restrictions and frequency, see~\cite[Chapter 1]{BP17}.

The next lemma records several basic facts about horizontal slices.

\begin{lemma}\label{lem:bunchoffacts}
    Let $K=K(n,m,\Lambda)$ be a Bedford-McMullen carpet with $n>m$. Then for any $y\in\pi_2(K)$ with a unique $m$-adic expansion $y=\sum_{k=1}^\infty y_km^{-k}$ (where $y_k\in J$):
    \begin{enumerate}
        \item For every $p\geq 1$ and every pair of $n^p$-adic partition intervals $U,V$ of $K^y_p$, $K^y\cap U$ is simply a translated copy of $K^y\cap V$;
        \item For every $p,t\geq 1$ and every $n^p$-adic partition interval $U$ of $K^y_p$, $U\cap K^y_p$ contains exactly $\prod_{k=p+1}^{p+t}\# I_{y_k}$ many $n^{p+t}$-adic partition intervals of $K^y_{p+t}$. 
    \end{enumerate}
    In addition, if $\#I_{y_k}\geq 2$ for all large $k$, then the following hold:
    \begin{enumerate}
        \setcounter{enumi}{2} 
        \item $\inf_{p\geq 1}\min\{|K^y\cap U|/|U|: U\text{ is an  $n^p$-adic partition interval of $K^y_p$}\}>0$;
        \item Let an open interval $G$ be called a \emph{gap} of $K^y_p$ if $G$ is a connected component of $U\setminus K^y$ for some $n^p$-adic partition interval $U$ of $K^y_p$. Then 
        \[
            \sup_{p\geq 1}\max\{|G|n^{p}: G \text{ is a gap of } K^y_p\}<1.
        \]
    \end{enumerate}
\end{lemma}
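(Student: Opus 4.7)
The plan is to extract all four assertions from the coordinate representation~\eqref{eq:expressionofky}, which identifies $K^y$ with the set of base-$n$ expansions whose $k$-th digit lies in $I_{y_k}$. A basic open interval $U$ of $K^y_p$ corresponds to a prefix $(x_1,\ldots,x_p)$ with $x_k\in I_{y_k}$, and
\[
    K^y\cap\overline{U}=\sum_{k=1}^p\frac{x_k}{n^k}+n^{-p}S_p,\qquad S_p:=\Big\{\sum_{k=1}^\infty\frac{x_k'}{n^k}:x_k'\in I_{y_{p+k}}\Big\}\subset[0,1],
\]
so $K^y\cap U$ depends on $U$ only through a translation. Assertion (1) is precisely this translation statement; (2) follows by enumerating the allowed extensions $(x_1',\ldots,x_t')\in\prod_{k=1}^tI_{y_{p+k}}$.

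For (3), the relative diameter of $K^y\cap U$ equals $\diam(S_p)=\sum_{k=1}^\infty(\max I_{y_{p+k}}-\min I_{y_{p+k}})/n^k$, up to the potential loss of the two endpoints $\min S_p,\max S_p$ when passing from $\overline U$ to $U$. I would fix $K_0$ with $\#I_{y_k}\geq 2$ for $k\geq K_0$; every $p\geq 1$ then admits some $k$ with $p+k\geq K_0$ and $\max I_{y_{p+k}}-\min I_{y_{p+k}}\geq 1$, giving $\diam(S_p)\geq n^{-\max(1,K_0-p)}\geq n^{1-K_0}>0$. The same hypothesis forces $S_p$ to have no isolated points, so the extremes are approached by interior points and $\diam(K^y\cap U)=n^{-p}\diam(S_p)$, yielding the uniform lower bound.

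For (4), the rescaling $t\mapsto n^p(t-\sum_{k=1}^p x_k/n^k)$ turns each gap $G\subset U$ of $K^y_p$ into a connected component of $(0,1)\setminus S_p$ of length $|G|n^p$. These components come in three flavours: a left boundary gap of length $\sum_k\min I_{y_{p+k}}/n^k$, a right boundary gap of length $1-\sum_k\max I_{y_{p+k}}/n^k$, and interior gaps separating two consecutive first-level sub-clusters of $S_p$ (together with their deeper analogues obtained by iterating inside a sub-cluster). An interior gap at the $r$-th iteration has length at most $(n-1)/n^r<1$. Under the assumption, $\min I_{y_k}\leq n-2$ and $\max I_{y_k}\geq 1$ for $k\geq K_0$, so for $p\geq K_0$ both boundary gap lengths are bounded by $(n-2)/(n-1)<1$. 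The finitely many $p<K_0$ each contribute a strict inequality $<1$, since otherwise $K^y\cap U$ would be empty, contradicting the existence of tails $x_k'\in I_{y_{p+k}}$ with $\sum x_k'/n^k\in(0,1)$ guaranteed by the eventual lower bound on $\#I_{y_k}$.

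I expect the main obstacle to be the uniformity across all $p\geq 1$ in (4). The boundary gap bound requires the slightly finer arithmetic facts $\min I_{y_k}\leq n-2$ and $\max I_{y_k}\geq 1$ rather than merely $\#I_{y_k}\geq 2$, and a small amount of bookkeeping is needed to confirm that the supremum remains strictly below $1$ despite the possibility of very small $\#I_{y_k}$ for $k<K_0$.
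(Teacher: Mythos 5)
Your proof of parts (1) and (2) is the same coordinate computation the paper uses. For parts (3) and (4) you take a genuinely different route, and it is worth comparing the two.

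Your argument bounds the gaps explicitly: you decompose the complement of the rescaled slice $S_p$ into a left boundary gap of length $\min S_p = \sum_k \min I_{y_{p+k}}/n^k$, a right boundary gap of length $1 - \max S_p$, and interior gaps between consecutive level-$r$ sub-clusters, and you bound each flavour by an explicit formula ($(n-2)/(n-1)$ for boundary gaps with $p \geq K_0$; $(n-1)/n^r$ for interior gaps at iteration $r$). This works, but it requires tracking several cases and the slightly finer facts $\min I_{y_k} \leq n-2$, $\max I_{y_k} \geq 1$ (which, as you note, are actually equivalent to $\#I_{y_k} \geq 2$ when $I_{y_k} \subset \{0,\dots,n-1\}$). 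The paper instead uses a single pigeonhole observation: for $p$ past the threshold, part (2) gives at least $2 \cdot 2 = 4$ disjoint basic open intervals of $K^y_{p+2}$ inside $U$, each containing a point of $K^y$. A connected gap $G$ cannot contain any of those four points, so it intersects at most two consecutive of the four sub-intervals and is therefore disjoint from at least two; comparing lengths yields $|G| \leq |U| - 2n^{-p-2}$ in one line, and the same four points immediately give $\diam(K^y \cap U) \geq n^{-p-2}$. This is shorter and avoids the case analysis you carry out. Your explicit formulas have the advantage of giving the optimal constants (e.g.\ the $(n-2)/(n-1)$ bound is sharp), but for the lemma as stated the pigeonhole argument suffices.

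On the uniformity issue you flag: both arguments treat the regime $p \geq K_0$ (or $p \geq p_0$ in the paper's notation) directly and leave the finitely many $p < K_0$ to be absorbed by the observation that each such $p$ individually contributes a ratio strictly below $1$ (since $K^y \cap U$ is nonempty with positive diameter once one iterates far enough to reach levels where $\#I_{y_k} \geq 2$). The paper glosses over this exactly as you do; it is a routine but genuinely needed finite-case check, and your acknowledgment of it is appropriate. There is no gap in your argument.
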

\begin{proof}
    Let $p\geq 1$ and let $U$ be any $n^p$-adic partition interval of $K^y_p$, say $U=(a,a+n^{-p})$. By~\eqref{eq:expressionofky} and~\eqref{eq:kyp}, 
    \begin{align*}
        K^y \cap U &= K^y\cap (a,a+n^{-p}) \\
        &= \Big\{ a+\sum_{k=p+1}^\infty \frac{z_k}{n^k}: z_k\in I_{y_k} \text{ for all } k \Big\}\setminus\{a,a+n^{-p}\} \\
        &= a + \Big( \Big\{ \sum_{k=p+1}^\infty \frac{z_k}{n^k}: z_k\in I_{y_k} \text{ for all } k \Big\}\setminus\{0,n^{-p}\} \Big).
    \end{align*}
    This shows that $K^y\cap U$ is a translate of the same set for any such $U$ and thus proves (1). Also, 
    \[
        K^y_{p+t}\cap U = \bigcup\Big\{\Big[ a+\sum_{k=p+1}^{p+t} \frac{x_k}{n^k}, \Big( a+\sum_{k=p+1}^{p+t} \frac{x_k}{n^k} \Big)+n^{-p-t}\Big] : x_k\in I_{y_k} \Big\}\setminus\{a,a+n^{-p}\},
    \]
    which implies (2).
    
    Next, suppose $\#I_{y_k}\geq 2$ for all large $k$. Let $p$ be so large that $\#I_{y_k}\geq 2$ whenever $k\geq p$. Let $G\subset U$ be any gap of $K^y_p$. By (2), $U$ contains at least four $n^{p+2}$-adic partition intervals of $K^{y}_{p+2}$. Since these open intervals are disjoint and each of them contains some point in $K^y$, we have $|K^y\cap U|\geq n^{-p-2}$ (which proves (3)) and $|G|\leq |U|-2n^{-p-2}= (1-2n^{-2})n^{-p}$, which gives (4).
\end{proof}

The following lemma slightly simplifies the proof of Corollary~\ref{cor:leftrightendpt}.
\begin{lemma}\label{lem:fromqto1}
    Let $K=K(n,m,\Lambda)$ be a Bedford-McMullen carpet with $n>m$ and let $y\in\pi_2(K)$. If $h\in\S(\R)$ satisfies that $h(K^y)\subset K^y$ and there are two open intervals $U,V$ such that 
    \begin{equation}\label{eq:fromqto1first}
        h^q(K^y\cap U) = K^y \cap V \neq \varnothing
    \end{equation}
    for some $q>1$, then we can find open intervals $U',V'$ such that 
    \[
        h(K^y\cap U') = K^y \cap V' \neq \varnothing.
    \]
\end{lemma}
\begin{proof}
    Since $K^y\cap V=h^q(K^y\cap U)$ is non-empty and contained in $h^q(U)\cap V$, $h^q(U)\cap V$ is a non-empty open interval that meets $K^y$. Taking $V':=h^q(U)\cap V$ and $U':=h^{-1}(V')$, we have 
    \begin{align*}
        h(K^y\cap U') \subset h(K^y) \cap h(U') \subset K^y \cap V' \neq\varnothing.
    \end{align*}
    For the reverse inclusion, just note that
    \begin{align*}
        K^y \cap V' \subset K^y \cap V &= h^q(K^y \cap U) \cap V && (\text{by~\eqref{eq:fromqto1first}}) \\
        &\subset h^q(K^y) \cap h^q(U) \cap V \\
        &= h^q(K^y) \cap V'  && \text{(definition of $V'$)} \\
        &= h\big( h^{q-1}(K^y)\cap h^{-1}(V') \big) \\
        &\subset h(K^y \cap U'). && \text{(since $q\geq 1$)}
    \end{align*}
\end{proof}

It is also worthy of mentioning that the Hausdorff dimension of Bedford-McMullen carpets was obtained by Bedford~\cite{Bed84} and McMullen~\cite{Mcm84} independently as follows:
\begin{equation}\label{eq:dimhk}
    \dimh K = \frac{1}{\log m}\cdot\log\Big( \sum_{j=0}^{m-1} (\# I_j)^{\log m/\log n} \Big).
\end{equation}

%------------------------------
\section{Logarithmic commensurability on non-stationary deleted-digit sets}
\label{sec:logcom}

Deleted-digit sets naturally appear in the study of horizontal and vertical slices and projections of Bedford-McMullen carpets.

\begin{definition}[Deleted-digit set]
	Let $n\geq 2$ be an integer. For any non-empty digit set $\D\subset\{0,1,\ldots,n-1\}$, we define $E(n,\D)$ as the self-similar set associated with the IFS $\{\frac{x+i}{n}:i\in\D\}$ on $\R$. Specifically, 
	\begin{equation*}
		E(n,\D) = \Big\{ \sum_{k=1}^\infty \varepsilon_kn^{-k}: \varepsilon_k\in\D \Big\}.
	\end{equation*}
\end{definition}

A well-known example is the middle-third Cantor set, where $n=3$ and $\D=\{0,2\}$. Note that for any deleted-digit set $E(n,\D)$, the associated IFS $\{\frac{x+i}{n}:i\in\D\}$ always satisfies the open set condition; see~\cite{Fal14}. Since $\frac{[0,1]+i}{n}\subset[0,1]$ for all $n\geq 1$ and all $0\leq i\leq n-1$, the unit interval $[0,1]$ can be used for the standard iteration process to construct $E(n,\D)$. More precisely, starting with $E_0(n,\D):=[0,1]$ and recursively defining 
\begin{equation}\label{eq:eknd}
    E_k(n,\D) := \bigcup_{i\in\D} \frac{E_{k-1}(n,\D)+i}{n}, \quad k\geq 1,
\end{equation}
we get a decreasing sequence $\{E_{k}(n,\D)\}_{k=0}^\infty$ such that $\bigcap_{k=0}^\infty E_k(n,\D)=E(n,\D)$.

It is evident that $E(n,\D)$ has non-empty interior if and only if $\#\D=n$. In particular, if $\dimh E(n,\D)<1$ then $E(n,\D)$ must be totally disconnected. It can also be easily checked by induction that $E_k(n,\D)$ is a union of $n^k$-adic intervals, and each pair of such intervals is either adjacent (sharing a common endpoint) or disjoint. 

Many horizontal and vertical slices of a Bedford-McMullen carpet $K=K(n,m,\Lambda)$ are deleted-digit sets or a finite union of them. For example, if $y=\sum_{k} jm^{-k}$ for some $j\in J$, then we have by~\eqref{eq:expressionofky} that $K^y = E(n,I_j)$. On the other hand, the projections of $K$ onto the principal axes are also deleted-digit sets: 
\begin{align*}
    \pi_1(K) &= \Big\{ \sum_{k=1}^\infty \frac{x_k}{n^k}: \Big( \sum_{k=1}^\infty \frac{x_k}{n^k},\sum_{k=1}^\infty \frac{y_k}{m^k} \Big) \in K \Big\} \\
    &= \Big\{ \sum_{k=1}^\infty \frac{x_k}{n^k}: \exists \{y_k\} \text{ such that } (x_k,y_k)\in\Lambda \Big\}\\
    &= \Big\{ \sum_{k=1}^\infty \frac{x_k}{n^k}: x_k\in I \text{ for all $k$} \Big\} = E(n,I)
\end{align*}
and similarly, $\pi_2(K)=E(m,J)$.

\begin{lemma}\label{lem:fw09}
    Let $E=E(n,\D)$ be a deleted-digit set with $1<\#\D<n$. If $g\in\S(\R)$ sends $E$ into itself, then the similarity ratio of $g$ is a rational power of $n$.
\end{lemma}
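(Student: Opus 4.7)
The approach is to reduce the statement to the logarithmic commensurability theorem of Feng and Wang~\cite{FW09}. First I will observe that $E = E(n,\D)$ is, by construction, the attractor of the homogeneous iterated function system $\{\psi_i(x) = (x+i)/n\}_{i \in \D}$, every map of which has the same contraction ratio $1/n$. Next I will note, as already recorded in the paragraph preceding the lemma, that the open interval $(0,1)$ witnesses the open set condition: each $\psi_i$ sends $(0,1)$ into itself, and the images $\psi_i((0,1))$ have pairwise disjoint interiors. Finally, the hypothesis $1 < \#\D < n$ rules out the two degenerate cases (single point and full interval), so that $0 < \dimh E < 1$ and in particular $E$ is a perfect, proper, totally disconnected subset of $[0,1]$.

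With these three points in place I will invoke the Feng--Wang theorem directly: for any self-similar set on $\R$ generated by a homogeneous IFS satisfying the open set condition, the contraction ratio of any self-embedding similitude must be a rational power of the common contraction ratio of the IFS. Applied here with common ratio $1/n$, this gives $\log \lambda / \log n \in \Q$, which is exactly the claim. The auxiliary case $\lambda \geq 1$ can be handled separately: iterating the inclusion $g(E) \subset E$ and using that $E$ is bounded and perfect rules out $\lambda > 1$, while $\lambda = 1$ trivially satisfies the conclusion.

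Since the proof is a direct citation there is no real obstacle within the present argument; all the substantive work has already been carried out in~\cite{FW09}. If one wished instead to give a self-contained proof tailored to deleted-digit sets, the natural route would be to exploit the rigid gap structure of $E$: every bounded complementary interval has length of the form $c \cdot n^{-k}$ for integers $c \geq 1$ and $k \geq 0$, so comparing such length scales inside $g(E)$ (which lies in $E$ and is a similar copy of $E$ scaled by $\lambda$) with those inherited from $E$ should force a multiplicative relation between $\lambda$ and $n$. The hard part of any such self-contained argument would be showing that if $\log \lambda / \log n$ were irrational then no matching of gap scales is possible at any depth, a Diophantine-style contradiction which essentially recovers a special case of Feng--Wang's reasoning; I see no advantage in reproducing it within the preliminaries.
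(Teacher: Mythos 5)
Your proof is correct and takes essentially the same route as the paper, which simply cites the Feng--Wang logarithmic commensurability theorem as a special case. The extra bookkeeping you provide (verifying homogeneity, the open set condition with $(0,1)$, nondegeneracy via $1<\#\D<n$, and disposing of $\lambda\geq 1$ by a diameter comparison) is sound but unnecessary detail beyond what the paper records.
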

\begin{proof}
    The statement is a special case of~\cite[Theorem 1.1]{FW09}.
\end{proof}

A generalization of this logarithmic commensurability theorem as follows is one of the main ingredients in our proof of the dependent case. Recall that $N:=\max_{0\leq j\leq m-1}\#I_j$.

\begin{proposition}\label{prop:allnthenrational}
    Let $K=K(n,m,\Lambda)$ be a Bedford-McMullen carpet with $n>m$, $2\leq N\leq n-1$ and write $\alpha:=\frac{\log N}{\log n}$. For $y\in\pi_2(K)$, if $0<\h^\alpha(K^y)<\infty$ and there is a contraction $h\in\S(\R)$ sending $K^y$ into itself, then the contraction ratio of $h$ is a rational power of $n$.
\end{proposition}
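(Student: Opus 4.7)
The plan is to adapt the Feng--Wang argument (Lemma~\ref{lem:fw09}) to the non-autonomous Moran set $K^y$, proceeding in three stages: a regularity reduction, a compactness-based extraction of limit embeddings, and a rigidity argument that tries to reduce to the self-similar setting. For the first stage, I extract uniform structure from the Hausdorff measure hypothesis. The elementary covering $K^y \subset K^y_p$ yields $\h^\alpha(K^y) \leq \lim_{p\to\infty}\prod_{k=1}^p \#I_{y_k}/N^p$, a non-increasing limit. Positivity of $\h^\alpha(K^y)$ forces $\sum_k \log(N/\#I_{y_k})<\infty$, and since this summand is bounded below by $\log(N/(N-1))>0$ whenever $\#I_{y_k}<N$, only finitely many such $k$ occur. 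Combining this with finiteness of $\h^\alpha(K^y)$ and Lemma~\ref{lem:bunchoffacts}, the Moran probability measure $\mu$ on $K^y$ (placing mass $(\prod_{k=1}^p \#I_{y_k})^{-1}$ on each level-$p$ basic interval) is $\alpha$-Ahlfors regular, and the digit sets $\{I_{y_k}\}$ eventually take values in the finite collection $\mathcal{D} := \{I_j : j\in J,\ \#I_j = N\}$.

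For the second stage, suppose for contradiction that $a := -\log\lambda/\log n \notin \Q$. For each $k \geq 1$, choose $p_k \in \N$ with $n^{-p_k - 1} < \lambda^k \leq n^{-p_k}$ and set $r_k := \lambda^k n^{p_k} \in (n^{-1}, 1]$. By Weyl equidistribution, $\{r_k\}$ is dense in $(n^{-1}, 1]$. Since $h^k(K^y)$ has diameter $\leq n^{-p_k}$, it lies inside some level-$p_k$ basic interval $U_k$; if $T_k$ is the similitude taking $U_k$ to $[0,1]$, the composition $\psi_k := T_k \circ h^k : K^y \to K^{\sigma^{p_k}y}$ is a similitude of ratio $r_k$ with $\psi_k(K^y) \subset K^{\sigma^{p_k}y}$. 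Finiteness of $\mathcal{D}$ makes $\mathcal{D}^\N$ compact in the product topology, and together with compactness of $[n^{-1}, 1]$ and the continuity of $y' \mapsto K^{y'}$ in Hausdorff metric, I can pass to a subsequence to obtain a limit similitude $\psi^* : K^y \to K^{y^*}$ with $\psi^*(K^y) \subset K^{y^*}$, of ratio $r^*$ for any prescribed limit point $r^*$ of $\{r_k\}$ in $[n^{-1}, 1]$.

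The final step, and the main obstacle, is to convert this abundance of embeddings into a contradiction with Lemma~\ref{lem:fw09}. My plan is to locate in the orbit-closure of $y$ under the shift a point $y^*$ that is \emph{periodic}, say of period $s$; by pigeonhole on longer and longer prefixes in the finite alphabet $\mathcal{D}$, such periodic $y^*$ exist in the orbit-closure. For such $y^*$, the target $K^{y^*}$ is genuinely self-similar --- generated by a homogeneous IFS of common ratio $n^{-s}$ satisfying the open set condition --- so Lemma~\ref{lem:fw09} applies to its self-embeddings. The plan is then to chain $\psi^*$ with natural self-similar embeddings of $K^{y^*}$ into itself to produce a self-embedding of $K^{y^*}$ whose contraction ratio is $r^*$ times a rational power of $n$; Lemma~\ref{lem:fw09} then forces this ratio, and hence $r^*$, to be a rational power of $n$, contradicting the freedom to choose $r^*$ outside this countable set. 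The genuinely delicate point, where I expect the main work will lie, is verifying that this chain really produces a bona fide \emph{self}-embedding of $K^{y^*}$ rather than merely a map between distinct Moran sets --- this seems to require approximating $K^{y^*}$ by $K^{\sigma^{p_k}y}$ in Hausdorff distance and exploiting the self-similar rigidity of $K^{y^*}$ to promote the approximation to an exact embedding.
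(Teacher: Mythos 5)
Your first stage (showing $\#I_{y_k}=N$ for all large $k$, using positivity and finiteness of $\h^\alpha(K^y)$) is correct and is exactly the opening move of the paper's proof. From there, however, the paper takes a direct local route: it uses the irrationality of $\log\rho/\log n$ to pick exponents $s,t$ with $n^{-t}<\rho^s\leq n^{\varepsilon-t}$, looks at a maximal connected component $B$ of $K^y_{p_0}$, and derives a contradiction by comparing $\h^\alpha(h^s(B\cap K^y))$ with $\h^\alpha$ of its target component (Case 1), or by exhibiting a gap of impossibly large relative length, or a piece of impossibly small diameter (Case 2), invoking Lemma~\ref{lem:bunchoffacts}(1)--(4). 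Your compactness route is genuinely different, and it does not close.

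The gap is precisely where you say you expect it, and it is real for two separate reasons. First, you claim ``by pigeonhole on longer and longer prefixes'' that a \emph{periodic} $y^*$ lies in the orbit-closure of $y$ under the shift. This is false in general: the orbit-closure is a subshift on the finite alphabet $\mathcal{D}$, and subshifts (e.g.\ Sturmian ones) can be minimal and aperiodic. Pigeonhole gives repeated blocks $y_{p_1+1}\cdots y_{p_1+M}=y_{p_2+1}\cdots y_{p_2+M}$, hence periodic \emph{approximants}, but the periodic extensions of these blocks need not be in the orbit-closure and need not converge as $M\to\infty$. Second, even granting a periodic $y^*$ and a subsequence with $\sigma^{p_k}y\to y^*$ and $r_k\to r^*$, the limit you produce is an embedding $\psi^*\colon K^y\to K^{y^*}$, i.e.\ a map \emph{between two different Moran sets}. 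Lemma~\ref{lem:fw09} needs a self-embedding of the self-similar set $K^{y^*}$; to manufacture one by chaining you would also need a similitude sending $K^{y^*}$ into $K^y$ (or into a cylinder copy of $K^{\sigma^{p}y}$), and no such map is available. The remark that one should ``promote the approximation to an exact embedding'' by ``exploiting the self-similar rigidity of $K^{y^*}$'' is not an argument: Hausdorff-closeness of $K^{\sigma^{p_k}y}$ to $K^{y^*}$ does not upgrade the inclusion $\psi_k(K^y)\subset K^{\sigma^{p_k}y}$ to $\psi^*(K^{y^*})\subset K^{y^*}$, because the domain and codomain both change in the limit and only the codomain is stabilizing. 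If you want a workable path, the paper's local measure-and-gap comparison sidesteps both problems by never leaving the single non-autonomous set $K^y$; that is the ingredient your outline is missing.
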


We remark that~\cite[Theorem 1.1]{FW09} addresses self-similar sets that are built by repeating the same pattern at each level of the iterative process, whereas a horizontal slice is usually a deleted-digit set constructed from patterns that may vary from one level to another. Roughly speaking, Proposition~\ref{prop:allnthenrational} establishes that even in this non-stationary setting, if the patterns have consistent size---that is, the number of selected subintervals eventually stabilizes at every level---then the slice cannot be arbitrarily rescaled to fit within itself.

Our proof works by contradiction. If the shrinking factor is not a rational power of $n$, one could position the image of some local portion of the slice such that it either exceeds the allowable mass of the target region (see Case 1 below) or spans across a gap, resulting in an impossible hole in the initial structure (see Case 2 below). Both situations contradict the inherent geometric properties of the slice. Some idea of the proof stems from an ongoing work of the author and Rao~\cite{RXtodedone}.

\begin{proof}
    Considering $h^2$ if necessary, we may assume that $h$ is orientation-preserving, that is, $h(x)=\rho x+a$ for some $0<\rho<1$ and $a\in\R$. We aim to show that $\frac{\log\rho}{\log n}\in\mathbb{Q}$.
    
    If $y\in\bigcup_{k=1}^\infty\frac{\Z}{m^k}\cap(0,1)$, then it is not hard to see that $K^y$ is a finite union of scaled copies of $K^0$ and $K^1$, say 
    \[
        K^y = \Big( \bigcup_{s\in\Omega}\frac{K^0+s}{n^k} \Big) \cup \Big( \bigcup_{t\in\Gamma}\frac{K^1+t}{n^k} \Big),
    \]
    where $k\geq 1$ is an integer and $\Omega,\Gamma\subset\Z$. Since $h(K^y)\subset K^y$, Baire's theorem implies that one of those scaled copies, say $\frac{K^0+s_*}{n^k}$, must contain an (relative) interior part of $h(K^y)$ and hence of $\frac{K^0+s_*}{n^k}$ itself. From this observation, we can find a scaled copy $E$ of $K^0$ contracting by a factor $n^{-q}$ for some large integer $q$ such that $h(E)\subset \frac{K^0+s_*}{n^k}$. Since $K^0=E(n,I_0)$ is a deleted-digit set with positive Hausdorff dimension (because $\frac{K^0+s_*}{n^k}$ contains a copy of $h(K^y)$, which has dimension $\alpha>0$), we have by Lemma~\ref{lem:fw09} that $\frac{\log\rho}{\log n}\in\mathbb{Q}$.
    
    If $y\notin\bigcup_{k=1}^\infty\frac{\Z}{m^k}\cap(0,1)$ then, as seen before, $y$ admits a unique $m$-adic expansion $y=\sum_{k=1}^\infty y_km^{-k}$ with $\{y_k\}\subset J$. 
    We first prove that $\#I_{y_k}=N$ for all large $k$. Otherwise, $\#I_{y_k}<N$ for infinitely many $k$. Then for any $p\geq 1$, we have for all sufficiently large $\widetilde{M}$ that 
    \begin{equation}\label{eq:newproofofalln}
        \prod_{k=1}^{\widetilde{M}} \#I_{y_k} \leq (N-1)^pN^{\widetilde{M}-p} = \Big( \frac{N-1}{N} \Big)^p N^{\widetilde{M}}.
    \end{equation}
    Since $0<\h^\alpha(K^y)<\infty$, there is a constant $\delta_0$ such that $\h^\alpha_\delta(K^y)>\h^\alpha(K^y)/2>0$ when $0<\delta\leq\delta_0$ (for the definition of the measure $\h^\alpha_\delta$, see~\cite{Fal14}). However, by~\eqref{eq:newproofofalln} we obtain
    \[
        \h^\alpha_{n^{-\widetilde{M}}} (K^y) \leq \Big( \frac{N-1}{N} \Big)^p N^{\widetilde{M}} \cdot (n^{-\widetilde{M}})^\alpha = \Big( \frac{N-1}{N} \Big)^p < \frac{1}{2}\h^\alpha(K^y)
    \]
    provided that $p$ and $\widetilde{M}$ are large enough. This contradicts the lower bound above.

    Pick $p_0$ such that $\#I_{y_k}=N$ for all $k\geq p_0$. Recall the notation~\eqref{eq:kyp}. Since $N\leq n-1$, it is not hard to check that 
    \[
        M:= \sup_{p\geq p_0}\max\Big\{|B|n^p: B \text{ is a connected component of } K^y_p \Big\} \leq 2N<\infty,
    \]
    that is, for all $p\geq p_0$, every connected component of $K^y_p$ contains at most $2N$ many $n^p$-adic partition intervals of $K^y_p$. 
    Picking $p_0$ large enough at the beginning, we may assume that there is a connected component $B$ of $K^y_{p_0}$ consisting of exactly $M$ many  $n^{p_0}$-adic partition intervals of $K^y_{p_0}$. Note that $B$ is a closed interval. 
    
    Now, suppose on the contrary that $\frac{\log\rho}{\log n}\notin\Q$. Let $\varepsilon>0$ be a small fixed constant (will be specified later) and pick positive integers $s,t$ such that $-\varepsilon\leq s\cdot\frac{\log\rho}{-\log n}-t<0$. Equivalently, $n^{-t}<\rho^s\leq n^{\varepsilon-t}$. Since $h(K^y)\subset K^y$, $h^s(K^y)\subset K^y$. We distinguish two cases.

    {\bf Case 1}: $h^s(B)$ intersects exactly one connected component of $K^y_{p_0+t}$, say $B'$. In this case, $h^s(B\cap K^y)\subset B'\cap K^y$. Since $B$ consists of $M$ many $n^{p_0}$-adic partition intervals of $K^y_{p_0}$ and their intersections with $K^y$ are distinct from one another by only a translation (recall Lemma~\ref{lem:bunchoffacts}(1)), one can find a $\rho^sn^{-p_0}$-separated subset of $h^s(B\cap K^y)$ that has cardinality $M$. Since $\rho^sn^{-p_0}>n^{-p_0-t}$, every $n^{p_0+t}$-adic partition interval of $K^y_{p_0+t}$ in $B'$, which has length $n^{-p_0-t}$, contains at most one point in that subset. Thus by the maximality of $M$, $B'$ contains exactly $M$ many $n^{p_0+t}$-adic partition intervals of $K^y_{p_0+t}$. Recall that $0<\h^\alpha(K^y)<\infty$. Writing $c$ to be the $\alpha$-dimensional Hausdorff measure of the intersection of $K^y$ and any $n^{p_0+t}$-adic partition interval in $K^y_{p_0+t}$, we see from Lemma~\ref{lem:bunchoffacts}(1) that $0<c<\infty$ and $\h^\alpha(B'\cap K^y)=Mc$. On the other hand, by Lemma~\ref{lem:bunchoffacts}(2), each $n^{p_0}$-adic partition interval in $B$ contains exactly $\prod_{k=p_0+1}^{p_0+t} \#I_{y_k} = N^{t}$ many $n^{p_0+t}$-adic partition intervals in $K^y_{p_0+t}$. So $\h^\alpha(B\cap K^y)=MN^tc$ and 
    \begin{align}
        \h^\alpha(h^s(B\cap K^y)) = \rho^{\alpha s}\h^\alpha(B\cap K^y) &= \rho^{\alpha s}\cdot MN^tc \notag \\ 
        &> n^{-\alpha t}MN^tc \notag \\
        &= N^{-t}\cdot M\cdot N^tc = Mc = \h^\alpha(B'\cap K^y), \label{eq:rationpropcase1}
    \end{align}
    which contradicts that $h^s(B\cap K^y)\subset B'\cap K^y$.

    {\bf Case 2}: $h^s(B)$ intersects at least two connected components of $K^y_{p_0+t}$. In this case, $h^s(B)$ contains a hole between those components. More precisely, there exists some integer $i$ such that $(\frac{i}{n^{p_0+t}},\frac{i+1}{n^{p_0+t}})\cap K^y=\varnothing$ and $(\frac{i}{n^{p_0+t}},\frac{i+1}{n^{p_0+t}})\subset h^s(B)$. Write $V:=(\frac{i}{n^{p_0+t}},\frac{i+1}{n^{p_0+t}})$. Comparing the length, we can find either exactly one or two adjacent $n^{p_0}$-adic partition intervals of $K^y_{p_0}$ in $B$ of which the images under $h^s$ meet $V$.
    
    {\bf Case 2.1}: $V\subset h^s(\widetilde{I})$ for only one $n^{p_0}$-adic partition interval $\widetilde{I}$ in $B$ (see Figure~\ref{fig:vi1i2}(A)). Then, since $h^s(\widetilde{I} \cap K^y)\subset K^y$, $h^s(\widetilde{I}\cap K^y)\cap V=\varnothing$. This in turn tells us that $\widetilde{I}\cap K^y$ contains a gap of length $\geq|h^{-s}(V)|$. Therefore, 
    \begin{equation}\label{eq:rationpropcase2}
        \max\{|G|n^{p_0}: G \text{ is a gap of } K^y_{p_0}\} \geq |h^{-s}(V)|n^{p_0} = \rho^{-s}n^{-p_0-t}\cdot n^{-p_0} > n^{-\varepsilon}.
    \end{equation}
    Choosing $\varepsilon$ small enough at the beginning, this contradicts Lemma~\ref{lem:bunchoffacts}(4). 
    
    {\bf Case 2.2}: There are two $n^{p_0}$-adic partition intervals $\widetilde{I}_1,\widetilde{I}_2$ of $K^y_{p_0}$ in $B$ such that $V\cap h^s(\widetilde{I}_i)\neq\varnothing$, $1\leq i\leq 2$. Without loss of generality, assume that $\widetilde{I}_1$ is to the left of $\widetilde{I}_2$. Denote by $a_i$ the left endpoint of $h^s(\widetilde{I}_i)$. See Figure~\ref{fig:vi1i2}(B) for an illustration. 
    
    \begin{figure}[htbp]
        \centering
        \subfloat[One-interval case]
        {
            \begin{minipage}[t]{155pt}
                \centering
                \begin{tikzpicture}[scale=1]
                    \draw[thick] (0.1,0) to (1.9,0);
                    \node at(1,0.4) {$h^s(\widetilde{I})$};
                    \node[draw,shape=circle,inner sep=1.5pt,thick] at(0,0) {};
                    \node[draw,shape=circle,inner sep=1.5pt,thick] at(2,0) {};
                    \node[draw,shape=circle,inner sep=1.5pt,thick] at(0.2,-1) {};
                    \node[draw,shape=circle,inner sep=1.5pt,thick] at(1.8,-1) {};
                    \draw[thick] (0.3,-1) to (1.7,-1);
                    \node at(1,-1.3) {$V$};
                    \draw[thick,dashed] (0.2,-1) to (0.2,0);
                    \draw[thick,dashed] (1.8,-1) to (1.8,0);
                    %\draw[thick,dashed] (2.6,-1) to (2.6,0);
                \end{tikzpicture}
            \end{minipage}
        }
        \subfloat[Two-intervals case]
        {
            \begin{minipage}[t]{155pt}
                \centering
                \begin{tikzpicture}[scale=1]
                    \draw[thick] (0.1,0) to (1.9,0);
                    \draw[thick] (2.1,0) to (3.9,0);
                    \node at(1,0.4) {$h^s(\widetilde{I}_1)$};
                    \node at(3,0.4) {$h^s(\widetilde{I}_2)$};
                    \node[draw,shape=circle,inner sep=1.5pt,thick] at(0,0) {};
                    \node[draw,shape=circle,inner sep=1.5pt,thick] at(2,0) {};
                    \node[draw,shape=circle,inner sep=1.5pt,thick] at(4,0) {};
                    \node at(0,-0.3) {$a_1$};
                    \node at(2,-0.3) {$a_2$};
                    \node[draw,shape=circle,inner sep=1.5pt,thick] at(1,-1) {};
                    \node[draw,shape=circle,inner sep=1.5pt,thick] at(2.6,-1) {};
                    \draw[thick] (1.1,-1) to (2.5,-1);
                    \node at(1.8,-1.3) {$V$};
                    \draw[thick,dashed] (1,-1) to (1,0);
                    \draw[thick,dashed] (2,-1) to (2,0);
                    \draw[thick,dashed] (2.6,-1) to (2.6,0);
                \end{tikzpicture}
            \end{minipage}
        }
        \caption{A local illustration of Case 2}
        \label{fig:vi1i2}
    \end{figure}
    
    By Lemma~\ref{lem:bunchoffacts}(1), 
    \begin{align*}
        \big( a_1,a_1+|h^s(\widetilde{I}_2)\cap V| \big)\cap K^y &= \big( a_2,a_2+|h^s(\widetilde{I}_2)\cap V| \big)\cap K^y \\
        &= h^s(\widetilde{I}_2)\cap V \cap K^y \\
        &= \varnothing.
    \end{align*}
    Thus 
    \[
        h^s(\widetilde{I}_1)\cap K^y \subset (h^s(\widetilde{I}_1)\setminus V) \setminus \big( a_1,a_1+|h^s(\widetilde{I}_2)\cap V| \big).
    \]
    But the right hand side is an interval of length 
    \begin{equation}\label{eq:rationpropcase3}
        |h^s(\widetilde{I}_1)| -|h^s(\widetilde{I_1})\cap V|- |h^s(\widetilde{I}_2)\cap V| = \rho^s n^{-p_0}-|V| < n^{\varepsilon-p_0-t}-n^{-p_0-t}.
    \end{equation} 
    Choosing $\varepsilon$ small enough at the beginning, this contradicts Lemma~\ref{lem:bunchoffacts}(3). 
\end{proof}

The above argument actually reveals more than the logarithmic commensurability. In fact, since the contraction ratio of $h$ is a rational power of $n$, iterating the embedding will yield an exact match between certain regions of $K^y$, which forces boundary points to map to corresponding boundary points. To make this precise, for any compact set $A\subset\R$ and $a\in A$, we call $a$ a \emph{left} (resp. \emph{right}) \emph{isolated point} of $A$ if there is some $r>0$ such that $(a-r,a)\cap A=\varnothing$ (resp. $(a,a+r)\cap A=\varnothing$).  

\begin{corollary}\label{cor:leftrightendpt}
    Let $K,N,\alpha,y,h$ be as in Proposition~\ref{prop:allnthenrational}. Then there exist left and right isolated points $a$ and $b$ of $K^y$ such that $h(a)$ and $h(b)$ are also left and right isolated points of $K^y$, respectively (with the order reversed if $h$ is orientation-reversing). Moreover, for all sufficiently large $k$, one can find level-$k$ rectangles $R_k$ and $R'_k$ satisfying $(a,y)\in\varphi_{R_k}(K)$, $(b,y)\in\varphi_{R'_k}(K)$, and $\pi_2(R_k)=\pi_2(R'_k)$.
\end{corollary}

Roughly speaking, the ``moreover'' part requires that the two boundary points $(a,y),(b,y)$ lie in the same row at every stage of the iterative construction. This condition is needed for technical reasons that will become clear in later proofs. 

\begin{proof}
    To address the first statement, it suffices to prove it when $h$ is orientation-preserving. In fact, if $h$ is orientation-reversing, then $h^2$ is orientation-preserving. If $a,h^2(a)$ are both left (resp. right) isolated points of $K^y$, then $h(a)$ must be a right (resp. left) isolated point and thus completes the proof. 
    
    Similarly as before, let $\rho$ be the contraction ratio of $h$. By Proposition~\ref{prop:allnthenrational}, $\rho=n^{-p/q}$ for some positive integers $p,q$. We distinguish two cases.

    {\bf Case 1}: $y\notin\bigcup_{k=1}^\infty \frac{\mathbb{Z}}{m^k}\cap (0,1)$.

    Applying the argument in the proof of Proposition~\ref{prop:allnthenrational} to $s=q$, $t=p$ and $\varepsilon=s\cdot\frac{\log\rho}{\log n}-t=0$, we see that Case 2 in that proof is impossible because:
    \begin{enumerate}
        \item in Case 2.1, the right hand side of~\eqref{eq:rationpropcase2} now equals $1$, meaning that $\widetilde{I}$ contains a gap of the same length as itself, contradicting $\widetilde{I}\cap K^y\neq\varnothing$;
        \item in Case 2.2, the right hand side of~\eqref{eq:rationpropcase3} now equals $0$, meaning that $|h^q(\widetilde{I_1}\cap K^y)|=0$, contradicting $\h^\alpha(h^q(\widetilde{I_1}\cap K^y))>0$.
    \end{enumerate}
    Meanwhile, Case 1 in that proof now gives $\h^\alpha(h^q(B\cap K^y))=\h^\alpha(B'\cap K^y)$. We claim that 
    \begin{equation}\label{eq:hqkyhasnonemptyinte}
        h^q(B\cap K^y)=B'\cap K^y.
    \end{equation}
    If not, then since $h^q(B\cap K^y)\subset B'\cap K^y$, there exists some $x\in B'\cap K^y\setminus h^q(B\cap K^y)$. So we can find a large $k$ and an $n^k$-adic partition interval $W$ of $K^y_k$ such that $x\in W\cap B'$ but $W\cap h^q(B\cap K^y)=\varnothing$. However, since $0<\h^\alpha(K^y)<\infty$, Lemma~\ref{lem:bunchoffacts}(1) yields that $\h^\alpha(W\cap K^y)>0$. Thus 
    \[
        \h^\alpha(h^q(B\cap K^y)) \leq \h^\alpha(B'\cap K^y)-\h^\alpha(W\cap K^y) < \h^\alpha(B'\cap K^y),
    \]
    which contradicts the equality of the Hausdorff measures.
    
    Combining~\eqref{eq:hqkyhasnonemptyinte} and Lemma~\ref{lem:fromqto1}, we can find open intervals $U,V$ such that $h(K^y\cap U)=K^y\cap V\neq\varnothing$. Consequently, $h$ maps all the left and right isolated points of $K^y\cap U$ to the corresponding isolated points of $K^y\cap V$, respectively, which establishes the first statement.

    For the second one, since $y\notin\bigcup_{k=1}^\infty\frac{\Z}{m^k}\cap(0,1)$, the required level-$k$ rectangle $R_k$ and $R'_k$ are both unique for all $k$. Moreover, $y$ lies within the (relative) interior of both $\pi_2(R_k)$ and $\pi_2(R'_k)$, which implies that $\pi_2(R_k)=\pi_2(R'_k)$. 

    {\bf Case 2}: $y\in\bigcup_{k=1}^\infty \frac{\mathbb{Z}}{m^k}\cap(0,1)$.
    
    Similarly as at the beginning of the proof of Proposition~\ref{prop:allnthenrational}, we can express $K^y$ as 
    \begin{equation}\label{eq:whereisgamma}
        K^y = \Big( \bigcup_{i\in\Omega} \frac{K^0+i}{n^k} \Big) \cup \Big( \bigcup_{j\in\Gamma} \frac{K^1+j}{n^k} \Big),
    \end{equation}
    where $k$ is a large integer and $\Omega,\Gamma\subset\Z$. 
    Using Baire's theorem again, we may assume without loss of generality that some $\frac{K^0+i}{n^k}$ contains an interior part of $h(K^y)$. In particular, 
    \begin{equation}\label{eq:koplusiincludeh}
        \frac{K^0+i}{n^k} \supset h\Big( \frac{K^0+z}{n^c} \Big)
    \end{equation}
    for some $z,c\in\mathbb{Z}$. This inclusion can be reformulated as $g(K^0)\subset K^0$ for some similitude $g$ with contraction ratio $\rho n^{k-c}$. Since $0$ belongs to the last case, there are open intervals $U,V$ such that $g(U\cap K^0)=V\cap K^0\neq\varnothing$. In particular, $g$ maps all the one-sided isolated points of $K^0$ lying in $U$ to those lying in $V$. Since $g(K^0)\subset K^0$ is just a reformulation of~\eqref{eq:koplusiincludeh}, $h$ maps all one-sided isolated points of $\frac{K^0+z}{n^c}$ in some open interval $U'$ to one-sided isolated points of $\frac{K^0+i}{n^k}$ in another open $V'$. It remains to verify that these points are also one-sided isolated points of $K^y$ (for all large $k$, one can pick $R_k,R'_k$ to be those that lie above adjacent to $\R\times\{y\}$ and thus $\pi_2(R_k)=\pi_2(R'_k)$).
    
    To get the desired statement, we first note that $V'$ can be picked properly so that 
    \begin{equation}\label{eq:koslocallygoodv}
        \frac{K^0+i}{n^k}\cap V'=K^y\cap V'.
    \end{equation}
    This is automatic if $j\neq i$ for all $j\in\Gamma$ (with $\Gamma$ as in~\eqref{eq:whereisgamma}). The only potential complication occurs when some $j$ equals $i$, in which case $\frac{K^0+i}{n^k}$ and $\frac{K^1+j}{n^k}=\frac{K^1+i}{n^k}$ may overlap. If $K^0=K^1$ then it does not make any difference. If $K^0\neq K^1$, then because $\frac{K^0+i}{n^k}$ contains an interior part of $h(K^y)$, we have $\dimh K^0\geq \dimh K^y\geq \dimh K^1$. Since $K^0$ and $K^1$ are distinct deleted-digit sets, they must have different digit patterns; in particular, $K^0\setminus K^1\neq\varnothing$. So there is a small open interval $V''\subset V'$ such that 
    \[
        \Big( \frac{K^0+i}{n^k}\cap V'' \Big) \cap \frac{K^1+i}{n^k}=\varnothing \quad\text{but}\quad \frac{K^0+i}{n^k}\cap V''\neq\varnothing.
    \]
    Replacing $V'$ with $V''$ yields~\eqref{eq:koslocallygoodv}. In particular, every left (resp. right ) isolated point of $\frac{K^0+i}{n^k}$ in $V'$ is a left (resp. right) isolated point of $K^y$. 
    
    Next, suppose that a one-sided isolated point $a$ of $\frac{K^0+z}{n^k}$ in $U'$ is not a one-sided isolated point of $K^y$, there is a sequence $\{x_n\}\subset K^y\cap U'$ converging to $a$ from either the left or the right. Then $h(x_n)\to h(a)$ from the same side. But $h(x_n)\in K^y \cap V'$, which contradicts the fact that $h(a)$ is a one-sided isolated point of $K^y$ in $V'$ and completes the proof.
\end{proof}

%-----------------------------
\section{The independent case}\label{sec:indep}

One significant benefit of the independence assumption regarding $n$ and $m$ is that it yields a clear description of the dimensions of the orthogonal projections of $K$ onto all lines.

\begin{lemma}[\cite{FJS10}]\label{lem:projofcarpet}
    Let $K=K(n,m,\Lambda)$ be a Bedford-McMullen carpet. If $n\nsim m$, then $\dimh \pi_L(K)=\min\{\dimh K,1\}$ for every oblique linear line $L$.
\end{lemma}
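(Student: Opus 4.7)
The plan is to combine Marstrand's projection theorem with the invariance of $K$ under the toral endomorphism $T_{n,m}(x,y)=(nx\bmod 1,\,my\bmod 1)$, using the hypothesis $\log n/\log m\notin\Q$ to upgrade an almost-everywhere conclusion to one valid for every oblique line.

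The upper bound $\dimh\pi_L(K)\le\min\{\dimh K,1\}$ is immediate: $\pi_L$ is $1$-Lipschitz (so cannot raise Hausdorff dimension) and its image sits in a line. For the lower bound I would pass to the McMullen Bernoulli measure $\mu$ on $K$, obtained by taking the product measure on $\Lambda^\N$ whose weight at $(i,j)$ is proportional to $(\#I_j)^{(\log m/\log n)-1}$ and pushing forward; the McMullen formula~\eqref{eq:dimhk} gives $\dimh\mu=\dimh K$, so it suffices to prove $\dimh\pi_L\mu\ge\min\{\dimh\mu,1\}$. Marstrand's projection theorem applied to $\mu$ delivers this equality for Lebesgue-almost every direction.

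To remove the ``almost every'' restriction, I would exploit $T_{n,m}$-invariance. The differential of $T_{n,m}$ is $\operatorname{diag}(n,m)$, whose induced action on slopes is $s\mapsto s\cdot m/n$, so zooming into level-$k$ cells of $K$ relates $\pi_L\mu$ at scale $n^{-k}\times m^{-k}$ to projections of rescaled pieces of $\mu$ along lines of slope $s(m/n)^k$. Multiplicative independence $\log n/\log m\notin\Q$ makes the sequence $\{k(\log m-\log n)\}$ equidistributed modulo any nontrivial scale by Weyl's theorem, so the orbit of slopes is dense at every logarithmic scale and every oblique direction is a limit of directions sampled by the dynamics.

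The main obstacle is that the projection dimension is not continuous in the direction, so orbit density alone cannot transport the Marstrand equality from a.e.\ good directions to a prescribed $L$. Here I would invoke the CP-distribution / scenery framework of Furstenberg, refined by Hochman--Shmerkin and specialized to Bedford--McMullen carpets in~\cite{FJS10}: at $\mu$-a.e.\ point, the empirical distribution of magnified scenes of $\mu$ along the $T_{n,m}$-orbit converges to an ergodic CP-distribution whose fibre entropy in every direction is at least $\min\{\dimh\mu,1\}$. Ergodicity of this scenery process combined with the equidistribution above then forces the projection equality in every oblique direction, not merely almost every one; this scenery step is the technical heart of the argument and is precisely where the hypothesis $\log n/\log m\notin\Q$ is indispensable.
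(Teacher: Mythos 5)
The paper does not prove this lemma at all: it is quoted verbatim as a black-box result from [FJS10], with no argument supplied. Your sketch correctly reconstructs the strategy of that reference — the trivial Lipschitz upper bound, the McMullen--Bernoulli measure $\mu$ with $\dimh\mu = \dimh K$, Marstrand for a.e.\ direction, and then the Hochman--Shmerkin CP-distribution/scenery-flow machinery (adapted to the two-scale $T_{n,m}$-dynamics) to promote the almost-everywhere equality to every oblique direction, with $\log n/\log m \notin \Q$ entering precisely to make the scenery process uniformly scaling. One small imprecision: what is needed is equidistribution of $\{k\log(n/m)\}$ modulo $\log n$ (equivalently modulo $\log m$), which is equivalent to $\log n/\log m\notin\Q$; your phrasing ``equidistributed modulo any nontrivial scale'' is looser than this but points in the right direction. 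Since you explicitly defer the core scenery-flow step to [FJS10], your plan is consistent with, and no less self-contained than, the paper's treatment, which simply cites the result.
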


\begin{theorem}\label{thm:notoblique}
    Let $K=K(n,m,\Lambda)$ be a Bedford-McMullen carpet that is not supported on any line, where $n>m\geq 2$, and let $f\in\S(\R^2)$ be a self-embedding similitude of $K$. If $n\nsim m$, then $f$ is not oblique.
\end{theorem}

As aforementioned, the result originates from the work of Algom and Hochman~\cite{AH19}. Below we present a new proof of it by considering three cases, only one of which involves the independence condition $n\nsim m$. 
Recall that $N:=\max_j\#I_j$.

\paragraph{\noindent{\bf Case 1}} $\#J<m$. In other words, there is an empty row in the initial pattern of $K$. 

\begin{proof}[Proof of Theorem~\ref{thm:notoblique} under Case 1]
    If $f$ is oblique, then by Lemma~\ref{lem:projofcarpet},
    \begin{equation}\label{eq:pitklowerbd}
        \dimh \pi_t(K) \geq \dimh \pi_t(f(K)) = \min\{\dimh K,1\}, \quad t=1,2.
    \end{equation}
    Since $\# J<m$, $\dimh \pi_2(K)<1$. If there is some $j$ such that $\#I_j\geq 2$ (recall the notation from Section~\ref{subsec:bmcarpet2}), then we also have $\dimh \pi_2(K)<\dimh K$ (recall~\eqref{eq:dimhk}) and thus 
    \[
        \dimh \pi_2(K)<\min\{1,\dimh K\},
    \]
    which contradicts~\eqref{eq:pitklowerbd}. 
    If $\#I_j=1$ for all $j\in J$, then $\#\Lambda=\#J$. Therefore, 
    \[
        \dimh\pi_1(K) = \dimh E(n,I) \leq \frac{\log\#\Lambda}{\log n} < \frac{\log\#J}{\log m} = \dimh\pi_2(K) \leq \min\{\dimh K,1\},
    \]
    which again contradicts~\eqref{eq:pitklowerbd}.
\end{proof}

\paragraph{\noindent{\bf Case 2}} $\#J=m$ and $N=n$. In other words, there is no empty row but a full row in the initial pattern of $K$.

\begin{proof}[Proof of Theorem~\ref{thm:notoblique} under Case 2]
    In this case, choose $j$ with $\#I_j=n$. Setting $y:=\sum_{k=1}^\infty jm^{-k}$, we have seen that $K^{y}=E(n,I_j)$, which is an interval. So if $f$ is oblique, then $K\supset f(K)$ contains an oblique segment, contradicting Proposition~\ref{prop:noobliquelines}.
\end{proof}

\paragraph{\noindent{\bf Case 3}} $\#J=m$ but $N<n$. In other words, one can find a selected rectangle and an unselected one in each row in the initial pattern of $K$. 

We sketch the rough idea as follows. Assume an oblique embedding exists; then the carpet contains an oblique thin copy of itself. Rescaling this copy repeatedly flattens it until the image becomes extremely horizontal. Such a flattened image contains a thin horizontal strip spanning a full unit width while fitting strictly into a row of the carpet's grid. Since every horizontal slice of the carpet is nonempty, the strip must intersect the carpet in a dense manner—yet it sits entirely within a row-gap where the carpet is empty, yielding the desired contradiction.

\begin{proof}[Proof of Theorem~\ref{thm:notoblique} under Case 3]
    In this case, $K$ does not contain any horizontal line segments, and $\pi_2(K)=E(m,J)=[0,1]$. The latter implies that every horizontal slice of $K$ is non-empty. Let $0<\lambda\leq 1$ be the similarity ratio of $f$. If $f$ is oblique, without loss of generality, assume that $f$ sends the $x$-axis to a line of slope $u>0$. Fix a large integer $p>0$ so that $m^{-p}\lambda\leq\frac{1}{9m^2n}$.

    Let us pick a sequence $\{R_k\}_{k=1}^\infty$, where each $R_k$ is a level-$k$ rectangle and $R_{k+1}\subset R_k$ for all $k\geq 1$. Denote by $a_k$ the left bottom vertex of $R_k$. Pick an arbitrary $n^{-k+p}\times m^{-k+p}$ grid rectangle $Q_k$---that is, $Q_k$ is the Cartesian product of a pair of $n^{k-p}$-adic and $m^{k-p}$-adic intervals---containing $f(a_k)$. For convenience, we also let $\ell^k,\ell_k$ be the bottom and left sides of $R_k$, respectively. So $\ell^k\cap\ell_k=\{a_k\}$. See Figure~\ref{fig:rkfrkinvfrk} for an illustration. Below we record two simple yet crucial facts. Here we slightly abuse notation by writing $\varphi_{Q_k}$ to be the natural affine map sending $[0,1]^2$ onto $Q_k$.

    \begin{enumerate}
        \item Since $\ell^k$ is parallel to the $x$-axis and $f(x\text{-axis})$ is supported on a line of slope $u>0$, the line supporting $\varphi_{Q_k}^{-1}f(\ell^k)$ has slope $\frac{m^{k-p}u}{n^{k-p}}$, which tends to $0$ as $k\to\infty$. Moreover, $\varphi_{Q_k}^{-1}f(\ell^k)$ has length 
        \begin{align}
            |\varphi_{Q_k}^{-1}f(\ell^k)| &= \sqrt{n^{2(k-p)}|\pi_1(f(\ell^k))|^2+m^{2(k-p)}|\pi_2(f(\ell^k))|^2} \notag \\ 
            &= \sqrt{n^{2(k-p)}\cdot\lambda^2|\ell^k|^2\cdot\frac{1}{1+u^2} + m^{2(k-p)}\cdot\lambda^2|\ell^k|^2\cdot\frac{u^2}{1+u^2}} \notag \\
            &= \sqrt{n^{2(k-p)}\cdot\frac{\lambda^2}{n^{2k}}\cdot\frac{1}{1+u^2} + m^{2(k-p)}\cdot\frac{\lambda^2}{n^{2k}}\cdot\frac{u^2}{1+u^2}} \notag \\
            &\to \frac{n^{-p}\lambda}{\sqrt{1+u^2}}, \quad k\to\infty \label{eq:lengthofvflk}
        \end{align}
        \item Similarly, the line supporting $\varphi_{Q_k}^{-1}f(\ell_k)$ has slope $-\frac{m^{k-p}}{n^{k-p}u}$, which also tends to $0$ as $k\to\infty$. Moreover, $\varphi_{Q_k}^{-1}f(\ell_k)$ has length
        \begin{align*}
            |\varphi_{Q_k}^{-1}f(\ell_k)| &= \sqrt{n^{2(k-p)}|\pi_1(f(\ell_k))|^2+m^{2(k-p)}|\pi_2(f(\ell_k))|^2} \\
            &= \sqrt{n^{2(k-p)}\cdot\frac{\lambda^2}{m^{2k}}\cdot\frac{1}{1+u^{-2}}+m^{2(k-p)}\cdot\frac{\lambda^2}{m^{2k}}\cdot\frac{u^{-2}}{1+u^{-2}}} \\
            &\to\infty, \quad k\to\infty.
        \end{align*}
    \end{enumerate}
    Roughly speaking, for all large $k$, the set $\varphi_{Q_k}^{-1}f(R_k)$ forms a very flat parallelogram with one side short and the other one much longer.

    \begin{figure}[htbp]
        \centering
        \begin{tikzpicture}[scale=0.6]
            \draw[thick] (0,0) to (1,0) to (1,5) to (0,5) to (0,0);
            \node at (0,0)[circle,fill,inner sep=1.5pt, red]{};
            \node at (-0.4,-0.4)[circle,inner sep=2pt,red]{$a_k$};
            \node at (0.5,2.5)[circle,inner sep=2pt]{$R_k$};
            \node at (2,2.5)[circle, inner sep=2pt]{$\xrightarrow{f}$};
            % \draw[thick] (3,20/11+3/2) to (3+10/11,1.5) to (3+14/11,1.5+2/11) to (3+4/11,3.5) to (3,20/11+3/2);
            % \draw[thick] (3-5/11,30/11+3/2) to (3+10/11,1.5) to (3+14/11,1.5+2/11) to (3-1/11,3.5+10/11) to (3-5/11,30/11+3/2);
            \draw[thick] (3-8/11,54/22+3/2) to (3+10/11,1.5) to (3+14/11,1.5+8/33) to (3-4/11,54/22+3/2+8/33) to (3-8/11,54/22+3/2);
            \node at (3+10/11,1.5)[circle,fill, inner sep=1.5pt, red]{};
            \node at (3+10/11+0.9,1.2)[circle, inner sep=2pt, red]{$f(a_k)$};
            \draw[thick,dashed] (3,0) to (4,0) to (4,5) to (3,5) to (3,0);
            \node at (3.5,0.5)[circle,inner sep=2pt]{$Q_k$};
            \node at (5.5,2.5)[circle, inner sep=2pt]{$\xrightarrow{\varphi_{Q_k}^{-1}}$};
            %\draw[thick] (7,1.5+4/3) to (7+10/3,1.5) to (12,1.5+2/3) to (7+5/3,3.5);
            \draw[thick] (7,1.5+5/4) to (7+25/4,1.5) to (7+15/2,1.5+1/4) to (7+5/4,3) to (7,1.5+5/4);
            \node at (7+13/4,1.3)[circle,inner sep=2pt]{$\varphi_{Q_k}^{-1}f(R_k)$};
            \node at (7+25/4,1.5)[circle,fill, inner sep=1.5pt,red]{};
            \node at (7+25/4,0.9)[circle, inner sep=2pt,red]{$\widetilde{a}_k$};
            \node at (-0.4,2) {$\ell_k$};
            \node at (0.5,-0.4) {$\ell^k$};
        \end{tikzpicture}
        \caption{The evolution from $R_k$ to $f(R_k)$ to $\varphi_{Q_k}^{-1}f(R_k)$ (color online)}
        \label{fig:rkfrkinvfrk}
    \end{figure}
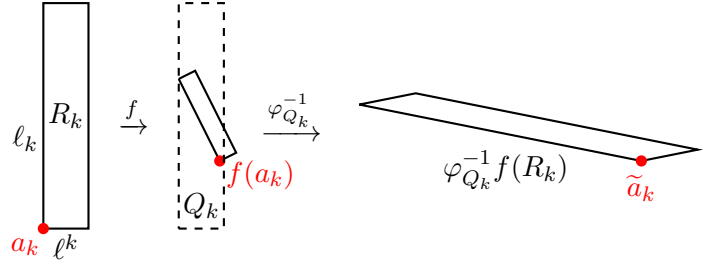
    
    Write $\widetilde{a}_k :=\varphi_{Q_k}^{-1}f(a_k)$. Since $f(a_k)\in Q_k$, $\widetilde{a}_k\in [0,1]^2$. Fix a large $k$ (will be specified later) so that $\frac{m^{k-p}}{n^{k-p}}\cdot\max\{u^{-1},u\}<\lambda$ and $|\varphi_{Q_k}^{-1}f(\ell^k)|\leq \frac{2n^{-p}\lambda}{\sqrt{1+u^2}}$ (recall~\eqref{eq:lengthofvflk}). Let us decompose the parallelogram $\varphi_{Q_k}^{-1}f(R_k)$ into vertical strips of unit width by setting
    \[
        \widetilde{E}_t := \varphi_{Q_k}^{-1}f(R_k) \cap \{z\in\R^2: \pi_1(z)\in[-t,-t+1]\}, \quad t\in \mathbb{Z}.
    \]
    Then $\varphi_{Q_k}^{-1}f(R_k)=\bigcup_{t\in\Z} \widetilde{E}_t$. 
    Note that for every $t$,
    \begin{align}
        |\pi_2(\widetilde{E}_t)| &\leq |\pi_2(\varphi_{Q_k}^{-1}f(R_k))| \notag \\
        &\leq |\pi_2(\varphi_{Q_k}^{-1}f(\ell^k))|+|\pi_2(\varphi_{Q_k}^{-1}f(\ell_k))| \notag \\
        &= m^{k-p}\cdot\frac{\lambda}{n^k}\cdot\frac{u}{\sqrt{1+u^2}}+m^{k-p}\cdot\frac{\lambda}{m^k}\cdot\frac{u^{-1}}{\sqrt{1+u^{-2}}}  < 2m^{-p}\lambda.\label{eq:lessthan13m}
    \end{align}

    Let us begin by proving Case 3 under the presumption of the following Condition A, and then proceed to establish its validity.

    \paragraph{{\bf Condition A}} There are integers $t_0\geq -1$ and $0\leq j_0\leq m$ such that $|\pi_1(\widetilde{E}_{t_0})|=1$ while $\pi_2(\widetilde{E}_{t_0})\subset [\tfrac{j_0}{m},\tfrac{j_0+1}{m}]$.

    Assume that Condition A holds. Let $j'_0:=j_0\pmod {m}$. Recall that $\varphi_{Q_k}^{-1}f(\ell_k)$ is supported on a line of slope $-\frac{m^{k-p}}{n^{k-p}u}$, say $g(x)=-\frac{m^{k-p}}{n^{k-p}u}x+c$ (the equation of that line). Since $\#I_{j'_0}<n$, there is some $0\leq i_0\leq n-1$ such that $K$ does not intersect the open rectangle $(\frac{i_0}{n},\frac{i_0+1}{n})\times(\frac{j'_0}{m},\frac{j'_0+1}{m})=:U$. Write $e$ to be the point $(-t_0,\lfloor\frac{j_0}{m}\rfloor)$ and $x_0:=-t_0+\frac{i_0+1/2}{n}$. Then the point $\xi_0:=(x_0,g(x_0))$ lies on the segment $\varphi_{Q_k}^{-1}f(\ell_k)$, and 
    \begin{align*}
        U+e &= \Big( \frac{i_0}{n}-t_0,\frac{i_0+1}{n}-t_0 \Big)\times \Big( \frac{j'_0}{m}+\lfloor\frac{j_0}{m}\rfloor, \frac{j'_0+1}{m}+\lfloor\frac{j_0}{m}\rfloor \Big) \\
        &= \Big( \frac{i_0}{n}-t_0,\frac{i_0+1}{n}-t_0 \Big)\times\Big( \frac{j_0}{m}, \frac{j_0+1}{m} \Big).
    \end{align*} 
    Since $\pi_2(\widetilde{E}_{t_0})\subset [\tfrac{j_0}{m},\tfrac{j_0+1}{m}]$, it is not hard to check that 
    \begin{equation}\label{eq:sliceinupluse}
        \xi_0+\varphi_{Q_k}^{-1}f(\ell^k) \subset U+e.
    \end{equation}
    See Figure~\ref{fig:localofthehole} for an illustration.

    \begin{figure}[htbp]
        \centering
        \begin{tikzpicture}[scale=1]
            \draw[thick,dashed] (0,0) to (0,4);
            \draw[thick,dashed] (4,0) to (4,4);
            \draw[thick,dashed] (-1.5,1) to (5.5,1);
            \draw[thick,dashed] (-1.5,2.5) to (5.5,2.5);
            \node at (2,3.5) {$\widetilde{E}_{t_0}$};
            \draw[thick] (-1, 1.75) to (5,1.45);
            \draw[thick] (-1,2.05) to (5.5,1.725);
            \draw[thick,red] (5,1.45) to (5.5,1.725);
            \draw[thick,red] (2,1.6) to (2.5,1.875);
            \node at (2,1.6)[circle,fill, inner sep=1.5pt]{};
            \node at (2,1.3) {$\xi_0$};
            \node at (5.8,1.3) [red] {$\varphi_{Q_k}^{-1}f(\ell^k)$};
            %\draw[thick,dashed] (2,1.6) circle (0.57);
            \node at (-1.9,1) {$\tfrac{j_0}{m}$};
            \node at (-1.9,2.5) {$\tfrac{j_0+1}{m}$};
            \draw[thick,blue] (1.4,1) to (2.6,1) to (2.6,2.5) to (1.4,2.5) to (1.4,1);
            \node at (2,0.7) [blue] {$U+e$};
        \end{tikzpicture}
        \caption{$\varphi_{Q_k}^{-1}f(\ell^k)+\xi_0$ is contained in the hole $U+e$ (color online)}
        \label{fig:localofthehole}
    \end{figure}
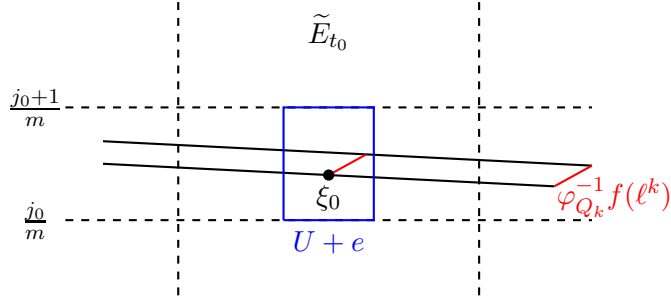

    Since the segment $\xi_0+\varphi_{Q_k}^{-1}f(\ell^k)$ is a translated copy of $\varphi_{Q_k}^{-1}f(\ell^k)$ and is contained in $\varphi_{Q_k}^{-1}f(R_k)$, $f^{-1}\varphi_{Q_k}(\xi_0)+\ell^k$ is a translated copy of $\ell^k$ and is contained in $R_k$. Since $\ell^k$ is the bottom side of the level-$k$ rectangle $R_k$, $\ell^k\cap \varphi_{R_k}(K)=\varphi_{R_k}(K^0\times\{0\})$. So there is $y_0\in[0,1]$ such that
    \[
        (f^{-1}\varphi_{Q_k}(\xi_0)+\ell^k)\cap\varphi_{R_k}(K) = \varphi_{R_k}(K^{y_0}\times\{y_0\})\neq\varnothing,
    \]
    where the non-empty conclusion is because $K^{y}\neq\varnothing$ for all $y$ (as pointed out at the beginning of Case 3). 
    This in turn implies that 
    \begin{equation}\label{eq:tranisanotherhorislice}
        (\xi_0+\varphi_{Q_k}^{-1}f(\ell^k)) \cap \varphi_{Q_k}^{-1}f\varphi_{R_k}(K) = \varphi_{Q_k}^{-1}f\varphi_{R_k}(K^{y_0}\times\{y_0\}) \neq\varnothing.
    \end{equation}
    Thus 
    \begin{align*}
        \varnothing &\neq (\xi_0+\varphi_{Q_k}^{-1}f(\ell^k)) \cap \varphi_{Q_k}^{-1}f\varphi_{R_k}(K) && \text{(just~\eqref{eq:tranisanotherhorislice})} \\
        &\subset (U+e) \cap \varphi_{Q_k}^{-1}f\varphi_{R_k}(K) && \text{(by~\eqref{eq:sliceinupluse})} \\
        &\subset (U+e) \cap \varphi_{Q_k}^{-1}f(K) \\
        &\subset (U+e) \cap \Big( \bigcup_{\substack{\text{$R$ is a level-$(k-p)$ rectangle} \\ \text{$R\cap f(K)\neq\varnothing$}}} \varphi_{Q_k}^{-1}\varphi_R(K) \Big) && \text{(since $f(K)\subset K$)} \\
        &\subset (U+e) \cap \bigcup_{z\in\Z^2}(K+z) \\
        &= (U+e) \cap (K+e) = \varnothing, && \text{(since $e$ is a lattice point)}
    \end{align*}
    which leads to a contradiction.

    It remains to prove Condition A. The key is to note that for every $y$, $\varphi_{Q_k}^{-1}f(R_k)\cap(\R\times\{y\})$ (which is a horizontal slice of the parallelogram) has length at most $Cn^{-p}\lambda$. To see this, consider Figure~\ref{fig:thetriangle} and recall that the two acute edges of that triangle has slope $\frac{m^{k-p}u}{n^{k-p}}$ and $-\frac{m^{k-p}}{n^{k-p}u}$, respectively. Note that $d_1\approx|\varphi_{Q_k}^{-1}f(\ell^k)|\lesssim n^{-p}\lambda$ and 
    \[
        d_2 = \frac{h}{\frac{m^{k-p}}{n^{k-p}u}} = u^2\frac{h}{\frac{m^{k-p}u}{n^{k-p}}} = u^2d_1 \lesssim n^{-p}\lambda.
    \]
    Thus $d_1+d_2$ is at most a constant multiple of $n^{-p}\lambda$.

    \begin{figure}[htbp]
        \centering
        \begin{tikzpicture}[scale=1]
            \draw[thick] (0,0) to (2,0.5);
            \draw[thick,dashed] (2,-0.25) to (-4,0.5);
            \draw[thick,dashed] (2,0.5) to (6,0); 
            \draw[thick] (0,0) to (6,0);
            \draw[thick,dashed] (2,0.5) to (2,0);
            \node at (0,-0.3) {$\widetilde{a}_k$};
            \node at (0.45,0.56) {$\varphi_{Q_k}^{-1}f(\ell^k)$};
            \node at (2.15,0.25) {{\red $h$}};
            \node at (1.5,-0.1) {{\red $d_1$}};
            \node at (3.5,-0.2) {{\red $d_2$}};
        \end{tikzpicture}
        \caption{Lengths of horizontal slices of $\varphi_{Q_k}^{-1}f(R_k)$, where $h$, $d_1$, and $d_2$ denote the lengths of the corresponding segments (color online)}
        \label{fig:thetriangle}
    \end{figure}
    
    If Condition A holds for $t_0=1$ and some suitable $j_0$ then we are done. Otherwise, there exists some $j_*$ such that $\widetilde{E}_{1}\cap(\R\times\{\frac{j_*}{m}\})\neq\varnothing$. As above, $|\varphi_{Q_k}^{-1}f(R_k)\cap(\R\times\{\frac{j_*}{m}\})|\leq Cn^{-p}\lambda$, picking $k$ large at the beginning, it is not hard to check that Condition A holds for $t_*$ and $j_*$, where $t_*:= \min\{t\geq 2: \widetilde{E}_t\cap(\R\times\{\tfrac{j_*}{m}\})=\varnothing\}$.
\end{proof}

\begin{remark}\label{rem:deindwork}
    Note that in the proof of Cases 2 and 3, we do not need the independence assumption $n\nsim m$ but only the fact that $n>m$. This will save us much effort in the dependent self-affine case.
\end{remark}

%--------------------------------------
\section{The dependent self-affine case}\label{sec:dep}

In this section, we fix a Bedford-McMullen carpet $K=K(n,m,\Lambda)$ with $n>m\geq 2$, $n\sim m$ and assume that $K$ is not supported on any line. Without loss of generality, write $n=m^{p/q}$, where $p,q\in\Z^+$ are coprime. It follows from $n>m$ that $p>q$. Recall that $N:=\max_{j} \#I_j$.

\subsection{Proof of Theorem~\ref{thm:main1}: the dependent case}

Unlike the independent case, when $n\sim m$ we only have information about the dimension of the projections of $K$ in almost all directions, as indicated by Marstrand's projection theorem. This limitation invalidates the argument used in Case 1 of Theorem~\ref{thm:notoblique}. To address it, we employ a rescaling approach that bypasses the need for a projection theorem altogether.

\begin{lemma}\label{lem:dep1}
    Let $f\in\S(\R^2)$ be a contracting map. If $f(K)\subset K$, then we can find $g\in\mathcal{S}(\R^2)$ and a compact set $C\subset\R$ with $\dimh C \leq \frac{\log N}{\log n}$ such that $g(K)\subset C\times\R$. Moreover, if $f$ is oblique, then $g$ can be picked oblique as well.
\end{lemma}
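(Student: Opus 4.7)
The plan is to exploit the relation $n^q=m^p$: the quantities $n^{-qt}$ and $m^{-pt}$ coincide, so axis-aligned tiles of width $n^{-qt}$ and height $m^{-pt}$ are genuine squares, and uniform rescaling by $m^{pt}=n^{qt}$ (a similitude!) maps such a tile to $[0,1]^2$. Fix digit prefixes $\alpha_1,\ldots,\alpha_{qt}\in\{0,\ldots,n-1\}$ and $\beta_1,\ldots,\beta_{pt}\in J$ with $(\alpha_j,\beta_j)\in\Lambda$ for $j\le qt$, set $c_1=\sum_{j=1}^{qt}\alpha_jn^{-j}$, $c_2=\sum_{j=1}^{pt}\beta_jm^{-j}$, and let $Q=[c_1,c_1+n^{-qt}]\times[c_2,c_2+m^{-pt}]$. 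Unfolding digits of points in $Q\cap K$ (the first $qt$ $x$-digits fixed to $\alpha$, the first $pt$ $y$-digits fixed to $\beta$, remaining coordinates constrained by $\Lambda$) yields
\[
m^{pt}\bigl(\pi_1(Q\cap K)-c_1\bigr)\subset\Bigl\{\sum_{\ell\ge1}i_\ell n^{-\ell}:i_\ell\in I_{\beta_{qt+\ell}}\text{ for }1\le\ell\le(p-q)t,\ i_\ell\in I\text{ for }\ell>(p-q)t\Bigr\},
\]
which is an $n^{-(p-q)t}$-neighbourhood (inside $[0,1]$) of the level-$(p-q)t$ prefix set of a horizontal slice of $K$; as $t\to\infty$ with stabilizing digits $\beta_{qt+\ell}\to y^*_\ell\in J$, it Hausdorff-converges to the slice $K^{y^*}$ with $y^*=\sum_\ell y^*_\ell m^{-\ell}$.

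To harness this for $f$, iterate. Let $\lambda$ be the contraction ratio of $f$ and, for each $k$, let $t_k$ be the unique integer with $m^{-p(t_k+1)}<\lambda^k\le m^{-pt_k}$, so $\diam(f^k(K))\le\sqrt{2}\,\lambda^k\le\sqrt{2}\,m^{-pt_k}$. Then $f^k(K)$ is contained in some $2\times 2$ block $\widetilde{Q}_k$ of tiles from the $m^{-pt_k}$-grid (the block surrounding any single tile that meets $f^k(K)$). Let $(c_1^{(k)},c_2^{(k)})$ denote the lower-left corner of $\widetilde{Q}_k$, and set
\[
g_k(x):=m^{pt_k}\bigl(f^k(x)-(c_1^{(k)},c_2^{(k)})\bigr);
\]
this is a similitude with contraction ratio $m^{pt_k}\lambda^k\in(m^{-p},1]$, orthogonal part equal to $O_f^k$ (where $O_f$ is the orthogonal part of $f$), and image $g_k(K)\subset[0,2]^2$. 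Applying the first paragraph's inclusion to each of the (at most four) sub-tiles constituting $\widetilde{Q}_k$, $\pi_1(g_k(K))$ is contained in a finite union $C_k\subset[0,2]$ of translates (by $0$ or $1$) of $n^{-(p-q)t_k}$-neighbourhoods of slice prefix sets.

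To finish, pass to a convergent subsequence. The $g_k$'s lie in a relatively compact part of $\S(\R^2)$, so after extraction $g_k\to g\in\S(\R^2)$. A diagonal pigeonhole on the digit sequence $\ell\mapsto\beta^{(k)}_{qt_k+\ell}\in\{0,\ldots,m-1\}$ produces, after a further subsequence, a single stabilized sequence $(y^*_\ell)\in J^{\N}$ defining $y^*:=\sum_\ell y^*_\ell m^{-\ell}\in\pi_2(K)$; since $n^{-(p-q)t_k}\to 0$, the $C_k$'s Hausdorff-converge to a finite union $C_*$ of translates of $K^{y^*}$ by $\{0,1\}$, and in the limit $\pi_1(g(K))\subset C_*$. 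Setting $C:=C_*\cup\{0,2\}\subset[0,2]$ (closed), we obtain $\dimh C=\dimh K^{y^*}\le\log N/\log n$ by~\eqref{eq:dimlbofky}, and $g(K)\subset C\times\R$. For the oblique preservation, note that the orthogonal part of $g_k$ is $O_f^k$; if $f$'s rotation angle $\theta$ is a rational multiple of $\pi$, I restrict to a periodic arithmetic subsequence on which $O_f^k=O_f$ stays oblique, and if $\theta$ is an irrational multiple of $\pi$, the angles $\{k\theta\bmod\pi/2\}$ are equidistributed so I extract a subsequence whose limiting orthogonal map is non-axis-aligned. The main technical obstacle is ensuring in the limit that $g$ really is a similitude of positive ratio with the desired (oblique) orthogonal part simultaneously with the digit-stabilization of the slice $y^*$; both issues are dealt with by the compactness-plus-diagonalization argument just sketched.
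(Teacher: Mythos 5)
Your proposal is correct and follows essentially the same route as the paper's proof: iterate $f$, rescale by the square grid of side $m^{pt_k}=n^{qt_k}$ (a genuine homothety because $n^q=m^p$), cover the image by a small block of grid tiles, unfold the digit structure to bound the $x$-projection by prefix coverings of horizontal slices, and diagonalize to stabilize the digit sequences and extract a limiting similitude $g$; the obliqueness of $g$ is preserved by choosing a subsequence adapted to the orbit $\{O_f^k\}$, exactly as in the paper. Two cosmetic remarks: your $2\times 2$ block needs to be chosen as the block that actually contains $f^k(K)$ rather than ``the block surrounding any single tile'' (since $\diam f^k(K)<m^{-pt_k}$ this always exists), and the Hausdorff limit $C_*$ should in general be a union of translates of several slices $K^{y^*_{j}}$ (one per row of the block) rather than of a single $K^{y^*}$ — but each has dimension $\le\log N/\log n$, so the bound on $\dimh C$ is unaffected.
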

The idea is simple. Rescaling the embedding $f^t(K)$ (as $t\to\infty$) produces a limiting similitude that sends the carpet into a set of the form $C\times\R$, where $C$ is roughly a finite union of horizontal slices. If the original embedding was oblique, the limiting similitude inherits this property, since the orthogonal parts of the rescaled maps either repeat infinitely often or accumulate on an oblique rotation.
\begin{proof}
    Let $\lambda$ be the contraction ratio of $f$. For all positive integers $t$ with $\lambda^t\leq m^{-1}$, define 
    \begin{equation}\label{eq:ktinlemma4.1}
        k_t := \max\{k\geq 0: m^{-pk-1}\geq \lambda^t\}.
    \end{equation}
    Since $f^t$ has contraction ratio $\lambda^t$, and because $K\subset[0,1]^2$ and $f^t(K)\subset K$, $f^t(K)$ is contained in at most four squares of side length $m^{-pk_t}=n^{-qk_t}$. More precisely, there are $x\in n^{-qk_t}\mathbb{N}, y\in m^{-pk_t}\mathbb{N}$ such that 
    \[
        f^t(K) \subset (x,x+2n^{-qk_t}) \times (y,y+2m^{-pk_t}) =: Q_t.
    \]
    Denote by $h_t$ the homothety sending $(0,1)^2$ onto the square 
    $(x,x+n^{-qk_t}) \times (y,y+m^{-pk_t})=:Q'_t$, the ``lower left square'' of $Q_t$. Note that for all $t$,
    \[ 
        h_t^{-1}f^t(K) \subset h_t^{-1}(Q_t) = (0,2)^2.
    \]
    The contraction ratio of $h_t^{-1}f^t$ equals $n^{qk_t}\lambda^t$, which is at most $n^{qk_t}\cdot m^{-pk_t-1}=m^{-1}$ and at least $n^{qk_t}\cdot m^{-p(k_t+1)-1}=m^{-p-1}$. So the sequence $\{h_t^{-1}f^t\}_{t}$ has accumulation points in $\mathcal{S}(\R^2)$. For simplicity, assume that $h_t^{-1}f^t$ converges to some limit $g$.

    Suppose $g(K)\cap(0,1)^2\neq\varnothing$. Then $h_t^{-1}f^t(K)\cap(0,1)^2\neq\varnothing$ for all large $t$.
    For each of these $t$, write $y=\sum_{k=1}^{pk_t}y_km^{-k}$, where $y_k\in\{0,\ldots,m-1\}$. Since $p>q$, the collection of all level-$pk_t$ rectangles in $\overline{Q'_t}$, denoted by $\mathcal{R}_t$, is
    \begin{equation*}
        \Big\{ \Big[ x+\sum_{k=qk_t+1}^{pk_t} \frac{x_k}{n^k}, \Big( x+\sum_{k=qk_t+1}^{pk_t} \frac{x_k}{n^k} \Big)+n^{-pk_t} \Big] \times [y,y+m^{-pk_t}]: (x_k,y_k)\in\Lambda \Big\}.
    \end{equation*} 
    Then we have
    \begin{align}
        h_t^{-1}f^t(K) \cap (0,1)^2 &= h_t^{-1}f^t(K) \cap h_t^{-1}(Q'_t) \notag \\
        &= h_t^{-1}(f^t(K)\cap Q'_t) \notag \\
        &\subset h_t^{-1}\Big( \bigcup\mathcal{R}_t \Big) \label{eq:rtisnotempty} \\
        &\subset \bigcup_{\substack{(x_k,y_k)\in\Lambda, \notag \\ qk_t+1\leq k\leq pk_t}} \Big[ \sum_{k=qk_t+1}^{pk_t} \frac{x_k}{n^{k-qk_t}}, n^{-(p-q)k_t}+ \sum_{k=qk_t+1}^{pk_t} \frac{x_k}{n^{k-qk_t}} \Big] \times\R \notag \\
        &=: E_t \times\R. \label{eq:htinversesupset}
    \end{align}

    Moreover, write $\j_t:=y_{qk_t+1}\cdots y_{pk_t}$, which is a word of length $(p-q)k_t\to\infty$ as $t\to\infty$. Using Cantor's diagonal argument, it is not hard to find an infinite word $\j=j_1j_2\cdots\in J^\infty$ such that for all $M\geq 1$, there is a large $t$ for which the word $\j_t\wedge\j$---the longest common prefix of $\j_t$ and $\j$---has length at least $M$. Define $z:=\sum_{k=1}^\infty j_km^{-k}$.

    We claim that 
    \begin{equation}\label{eq:productstructure}
        g(K)\cap(0,1)^2 \subset K^{z} \times \R.
    \end{equation}
    Fix an arbitrary $\delta>0$ and choose $M\in\mathbb{Z}^+$ so large that $n^{-M}<\frac{\delta}{2}$. Pick $t$ sufficiently large to satisfy the following two conditions:
    \begin{enumerate}
        \item $g(K)\cap (0,1)^2 \subset \mathcal{N}_{\delta/2}(h_t^{-1}f^t(K)\cap(0,1)^2)$, where $\mathcal{N}_{\delta/2}(\cdot)$ denotes the $\delta/2$-neighborhood.
        \item there is a word $\j_t$ such that $\j_t\wedge\j$ has length at least $M$.
    \end{enumerate}  
    By construction of the sets $E_t$ and the definition of $\j_t$, the second condition implies that every point of $E_t$ has its $x$-coordinate within $n^{-M}$ distance of $K^z$, that is, $E_t\subset \mathcal{N}_{n^{-M}}(K^z)$.  
    Hence
    \begin{align*}
        g(K)\cap(0,1)^2 &\subset \mathcal{N}_{\delta/2}(h_t^{-1}f^t(K)\cap(0,1)^2) &&\text{(condition (1))} \\
        &\subset \mathcal{N}_{\delta/2}(E_t\times\R) && \text{(by~\eqref{eq:htinversesupset})} \\
        &\subset \mathcal{N}_{n^{-M}+\delta/2}(K^z\times\R) \subset \mathcal{N}_{\delta}(K^z\times\R).
    \end{align*}
    Since $\delta$ is arbitrary and $K^z$ is closed, we establish~\eqref{eq:productstructure}.
    
    Applying the same argument to the other three squares comprising $Q_t$, we find at most four points $z_{0,0}:=z,z_{0,1},z_{1,0},z_{1,1}\in\pi_2(K)$ such that 
    \begin{equation}\label{eq:formofc}
        g(K) \subset \bigcup_{0\leq i,j\leq 1} (K^{z_{i,j}}+i)\times\R.
    \end{equation}
    Set $C:=\bigcup_{0\leq i,j\leq 1} (K^{z_{i,j}}+i)$. By~\eqref{eq:dimlbofky}, $\dimh C\leq\frac{\log N}{\log n}$.

    It remains to prove that if $f$ is oblique then $g$ can be chosen oblique as well. Write $O$ to be the orthogonal part of $f$. If $\{O^t\}_{t=1}^\infty$ is a finite collection, each element occurs infinitely often. Picking an oblique transformation $\widetilde{O}\in\{O^t\}_t$ and a subsequence $t_k$ where $O^{t_k}=\widetilde{O}$ for all $k$, we can consider $f^{t_k}$ instead of $f^t$ at the beginning of the above arguments and get an oblique $g$ because $g$, as the limit of $h_{t_k}^{-1}f^{t_k}$, has $\widetilde{O}$ as its orthogonal part. If $\{O^t\}_{t=1}^\infty$ is an infinite collection, then since a $2\times 2$ orthogonal matrix is either a rotation or a reflection, $O$ must be an irrational rotation (i.e., the rotation angle is an irrational multiple of $\pi$) and hence every rotation matrix is an accumulation point of $\{O^t\}_{t=1}^\infty$. We could then select a subsequence $\{O^{t_k}\}_k$ so that $O^{t_k}$ converges to an oblique transformation $O'$. Passing to this subsequence in the definition of $g$ gives a limit whose orthogonal part is $O'$, and thus $g$ is oblique. This completes the proof.
\end{proof}

\begin{remark}\label{rem:bairetofindint}
    In the above proof, if we apply Baire's theorem to~\eqref{eq:formofc}, some $(K^{z_{i,j}}+i)\times\R$ should contain an interior part of $g(K)$. In particular, there is an integer $k$ and a level-$k$ rectangle $R$ such that $g\varphi_R(K)\subset (K^{z_{i,j}}+i)\times\R$. As a consequence,
    \begin{equation}\label{eq:fvarphirkyinkz}
        g\varphi_R(K^y\times\{y\}) \subset (K^{z_{i,j}}+i)\times\R, \quad \forall y\in\pi_2(K).
    \end{equation}

    We also remark that the integer $k_t$ defined in~\eqref{eq:ktinlemma4.1} equals $\lfloor (-\frac{t\log\lambda}{\log m}-1)/p\rfloor$. It follows that if $\frac{\log\lambda}{\log n}\notin\Q$, then $\frac{\log\lambda}{\log m}\notin\Q$ as well and hence the sequence $\{n^{qk_t}\lambda^t\}_t$ is dense in an open subinterval of $(0,1)$. Since the proof of Lemma~\ref{lem:dep1} works for any convergent subsequence of $\{h_t^{-1}f^t\}_{t=1}^\infty$, we may select an accumulation map $\widetilde{g}$ so that $\widetilde{g}(K)\subset C_{\widetilde{g}}\times\R$, where $C_{\widetilde{g}}$ is a closed set with $\dimh C_{\widetilde{g}}\leq\frac{\log N}{\log n}$ (as with $C$ in Lemma~\ref{lem:dep1}), and the contraction ratio of $\widetilde{g}$ is an irrational power of $n$. As argued in Section~\ref{subsec:proofofthecom}, this additional freedom in the choice of ratio leads to a contradiction in the proof of Theorem~\ref{thm:main2}. 
\end{remark}

Now we are ready to prove the non-obliqueness theorem for the dependent case. The proof is divided into two parts, depending on whether $f$ is a strict contraction or an isometry. Before proceeding, we outline the intuition for the contracting case (the other case is similar).

Suppose for contradiction that an oblique embedding exists. The previous lemma then yields an oblique similitude $g$ sending $K$ into a strip of the form $C\times\R$, where $C$ is a Cantor set. Based on Corollary~\ref{cor:leftrightendpt}, the similitude $g$ maps some one-sided isolated point of a maximal horizontal slice to a one-sided isolated point of $C$. Since $g$ is oblique, it tilts the level-$k$ rectangle containing that point. But such a tilt forces part of the image rectangle to protrude horizontally beyond the boundary of $C$, thereby pushing some points of the carpet outside $C\times\R$ and leading to a contradiction.

\begin{theorem}\label{thm:dependentaffine}
    If $f$ is a similitude sending $K$ into itself, then $f$ is not oblique.
\end{theorem}
\begin{proof}[Proof of Theorem~\ref{thm:dependentaffine}: the contracting case]
    By Proposition~\ref{prop:noobliquelines}, it suffices to consider when $N\leq n-1$. 
    Suppose on the contrary that $f$ is oblique. Pick $g, C$ as in Lemma~\ref{lem:dep1} with $g$ oblique and let $\rho$ denote the contraction ratio of $g$. If $N=1$, then every horizontal slice of $K$ is a finite set. From the structure of $C$ (see~\eqref{eq:formofc}), it follows that $C$ is finite as well. Since $\#\Lambda\geq 2$ but $N=1$, there exists a pair of selected rectangles lying in different rows. However, by Proposition~\ref{prop:projofkisinf} and the obliqueness of $g$, the projection $\pi_1(g(K))$ is an infinite set. This contradicts the inclusion $\pi_1(g(K))\subset \pi_1(C\times\R)= C$. 
    
    Now suppose $2\leq N\leq n-1$. By Remark~\ref{rem:bairetofindint} (see~\eqref{eq:fvarphirkyinkz}), we can find $z\in\pi_2(K)$, $i\in\Z$, $k\geq 1$ and a level-$k$ rectangle $R$ such that 
    \begin{equation}\label{eq:origvarphir}
        g\varphi_R(K^y\times\{y\}) \subset (K^z+i)\times\R, \quad \forall y\in\pi_2(K).
    \end{equation}
    Since $g$ is a similitude, there is a unit directional vector $v:=(v_1,v_2)$ such that for every $y\in\pi_2(K)$, $g\varphi_R$ maps the horizontal line $\R\times\{y\}$ to $L_y:=\{rv+c_y:r\in\R\}$ for some $c_y\in\R^2$. Together with~\eqref{eq:origvarphir}, we have 
    \begin{equation}\label{eq:gky0slice}
        g\varphi_R(K^{y}\times\{y\}) \subset L_y \cap ((K^z+i)\times \R),\quad \forall y\in\pi_2(K).
    \end{equation}
    Since $g$ is oblique, $v_1,v_2$ are both nonzero. Without loss of generality, assume that $v_1,v_2>0$ (other cases can be similarly discussed). 

    Note that for each $y\in\pi_2(K)$, $g\varphi_R(K^y\times\{y\})$ can be regarded as a similar copy of $K^y$ contracted by $n^{-k}\rho$. Meanwhile, $L_y\cap((K^z+i)\times\R)$ can be viewed as a translated copy of $\frac{|v|}{v_1}K^z$. In particular, from~\eqref{eq:gky0slice} one can derive a similitude $h_y\in\S(\R)$ with contraction ratio $n^{-k}\rho\cdot\frac{v_1}{|v|}$ (which is independent of $y$) such that 
    \begin{equation}\label{eq:hykysubsetc}
        h_y(K^y)\subset K^{z}, \quad \forall y\in\pi_2(K).
    \end{equation}
    
    Choose $j_*\in J$ with $\#I_{j_*}=N$ and define $y_*:=\sum_{k=1}^\infty j_*m^{-k}$. Then $K^{y_*}=E(n,I_{j_*})$ (see~\eqref{eq:expressionofky}). By~\eqref{eq:hykysubsetc}, $h_{y_*}(K^{y_*})\subset K^{z}$. So $0<\h^\alpha(K^{z})<\infty$, where $\alpha:=\frac{\log N}{\log n}$ (recall the observation before Lemma~\ref{lem:bunchoffacts}). It may be beneficial to record an observation here (which will be of help later though not in this proof): since~\eqref{eq:hykysubsetc} also implies $h_z(K^z)\subset K^z$, it follows from Proposition~\ref{prop:allnthenrational} that 
    \begin{equation}\label{eq:thenewlogformula}
        \frac{\log(n^{-k}\rho\cdot v_1/|v|)}{\log n}\in\Q.
    \end{equation}
    
    Next, since $h_z(K^z)\subset K^z$, by Corollary~\ref{cor:leftrightendpt}, $h_z$ maps a left isolated point $a$ and a right one $b$ both to one-sided isolated points of $K^z$. The inclusion $h_z(K^z)\subset K^z$ is simply a one-dimensional reformulation of $g\varphi_R(K^z\times\{z\})\subset L_z\cap ((K^z+i)\times\R)$. This in turn gives us two points $\widetilde{a},\widetilde{b}\in g\varphi_R(K^{z}\times\{z\})$ and some $\delta>0$ such that both tubes $(\pi_1(\widetilde{a})- \delta,\pi_1(\widetilde{a})) \times\R$ and $(\pi_1(\widetilde{b}),\pi_1(\widetilde{b})+\delta)\times\R$ do not intersect $(K^{z}+i)\times\R$. Moreover, there are two level-$t$ rectangles $R_t,R'_t\subset R$ such that $\widetilde{a}\in g\varphi_{R_t}(K)$ and $\widetilde{b}\in g\varphi_{R'_t}(K)$, respectively, and 
    (again by Corollary~\ref{cor:leftrightendpt}) $\pi_2(R_t)=\pi_2(R'_t)$ for all $t\geq k_0$ for some $k_0$. See Figure~\ref{fig:leftrightendpt} for an illustration.

    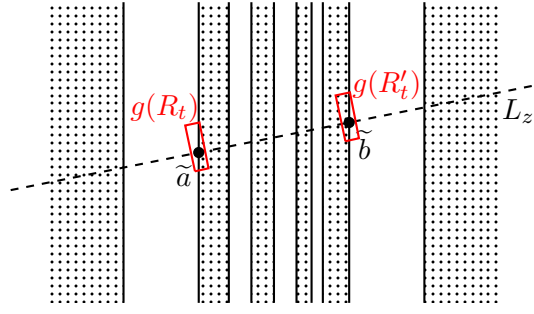
\begin{figure}[htbp]
        \centering
        \begin{tikzpicture}[scale=1]
            \draw[thick] (0,0) to (0,4);
            \draw[thick] (4,0) to (4,4);
            \fill[pattern=dots] (-1,0) rectangle (0,4);
            \fill[pattern=dots] (4,0) rectangle (5,4);
            \draw[thick] (1,0) to (1,4);
            \draw[thick] (1.4,0) to (1.4,4);
            \fill[pattern=dots] (1,0) rectangle (1.4,4);
            \draw[thick] (1.7,0) to (1.7,4);
            \draw[thick] (2,0) to (2,4);
            \fill[pattern=dots] (1.7,0) rectangle (2,4);
            \draw[thick] (2.3,0) to (2.3,4);
            \draw[thick] (2.5,0) to (2.5,4);
            \fill[pattern=dots] (2.3,0) rectangle (2.5,4);
            \draw[thick] (2.65,0) to (2.65,4);
            \draw[thick] (3,0) to (3,4);
            \fill[pattern=dots] (2.65,0) rectangle (3,4);
            \node at (1,2)[circle,fill, inner sep=1.5pt]{};
            \node at (3,2.4)[circle,fill, inner sep=1.5pt]{};
            \draw[thick,dashed] (-1.5,1.5) to (5.5,2.9);
            \node at (0.8,1.7) {$\widetilde{a}$};
            \node at (3.2,2.1) {$\widetilde{b}$};
            \node at (5.25,2.55) {$L_z$};
            \draw[thick,red] (0.95,1.75) to (1.13,1.79) to (1.01,2.4) to (0.82,2.36) to (0.95,1.75);
            \draw[thick,red] (2.95,2.15) to (3.13,2.19) to (3.01,2.8) to (2.82,2.76) to (2.95,2.15);
            \node at (0.55,2.6) [red] {$g(R_t)$};
            \node at (3.5,2.9) [red] {$g(R'_t)$};
        \end{tikzpicture}
        \caption{An illustration of $\widetilde{a},\widetilde{b},g(R_t)$ and $g(R'_t)$, where $(K^z+i)\times\R$ is supported in the shaded region. (color online)}
        \label{fig:leftrightendpt}
    \end{figure}

    Pick $x,x'\in K$ with $\pi_2(x)=\max\pi_2(K)$ and $\pi_2(x')=\min\pi_2(K)$. 
    %For convenience, let $e_1:=(1,0)$ and $e_2:=(0,1)$. 
    Since $\widetilde{a}\in g\varphi_{R_t}(K)$, $\widetilde{b}\in g\varphi_{R'_t}(K)$, $x$ can be expressed as
    \begin{equation}\label{eq:expressionofx}
        x = (g\varphi_{R_t})^{-1}(\widetilde{a})+ \Big(\begin{matrix} c_t \\ d_t \end{matrix}\Big) = (g\varphi_{R'_t})^{-1}(\widetilde{b})+\Big(\begin{matrix} \gamma_t \\ \eta_t \end{matrix}\Big)
    \end{equation}
    for some $-1\leq c_t,d_t,\gamma_t,\eta_t\leq 1$. Since $\pi_2(x)$ is the maximum, $d_t,\eta_t\geq 0$. Since $\widetilde{a}$, $\widetilde{b}$ are on the same slice and $\pi_2(R_t)=\pi_2(R'_t)$ for $t\geq k_0$, we have $d_t=\eta_t$ for $t\geq k_0$. Similarly, there are $-1\leq c'_t,\gamma'_t\leq 1$ and $-1\leq d'_t\leq 0$ for all $t\geq k_0$ such that 
    \begin{equation}\label{eq:expressionofxprime}
        x' = (g\varphi_{R_t})^{-1}(\widetilde{a})+\Big(\begin{matrix} c'_t \\ d'_t \end{matrix}\Big) = (g\varphi_{R'_t})^{-1}(\widetilde{b})+\Big(\begin{matrix} \gamma'_t \\ d'_t \end{matrix}\Big).
    \end{equation}

    Fix a large $t\geq k_0$ (will be specified later). Note that
    \[
        d_t-d'_t=\pi_2(x-x')=\max\pi_2(K)-\min\pi_2(K)=:\beta>0.
    \]
    So at least one of $d_t,-d'_t$ is no less than $\beta/2$. %Writing $e_{1,1}:=\pi_1(g(e_1))$ and $e_{2,1}:=\pi_1(g(e_2))$, 
    Writing $O=(\begin{smallmatrix} o_{1,1} & o_{1,2} \\ o_{2,1} & o_{2,2} \end{smallmatrix})$ to be the orthogonal part of $g$, we have by the obliqueness of $g$ that $o_{1,1},o_{1,2}\neq 0$. We distinguish four simple cases.

    \paragraph*{{\bf Case 1}: $d_t\geq\beta/2$ and $o_{1,2}>0$}
    In this case, we have by~\eqref{eq:expressionofx} that (recall $d_t=\eta_t$)
    \[
        g\varphi_{R'_t}(x) - \widetilde{b} = \Big( \begin{matrix} o_{1,1} & o_{1,2} \\ o_{2,1} & o_{2,2} \end{matrix}\Big)\cdot\Big(\begin{matrix} \rho n^{-t}\gamma_t \\ \rho m^{-t}d_t \end{matrix}\Big).
    \]
    Since $d_t\geq\beta/2$, $n>m$ and $|\gamma_t|\leq 1$, $0<\rho n^{-t}\gamma_to_{1,1}+ \rho m^{-t}d_to_{1,2}<\delta$ when $t$ is large and hence 
    \[
        \pi_1(g\varphi_{R'_t}(x)) =  \pi_1(\widetilde{b})+\rho n^{-t}\gamma_to_{1,1}+\rho m^{-t}d_to_{1,2} \in (\pi_1(\widetilde{b}), \pi_1(\widetilde{b})+\delta).
    \]
    In other words, $g\varphi_{R'_t}(x)$ lies in the tube $(\pi_1(\widetilde{b}),\pi_1(\widetilde{b})+\delta)\times\R$. But we have seen that this tube does not intersect $(K^{z}+i)\times\R\supset g\varphi_R(K)\ni g\varphi_{R'_t}(x)$, which leads to a contradiction.

    \paragraph*{{\bf Case 2}: $d_t\geq\beta/2$ and $o_{1,2}<0$}
    In this case, we have by~\eqref{eq:expressionofx} that 
    \[
        g\varphi_{R_t}(x) - \widetilde{a} = \Big( \begin{matrix} o_{1,1} & o_{1,2} \\ o_{2,1} & o_{2,2} \end{matrix}\Big)\cdot\Big(\begin{matrix} \rho n^{-t}c_t \\ \rho m^{-t}d_t \end{matrix}\Big).
    \]
    Since $d_t\geq\beta/2$, $n>m$ and $|c_t|\leq 1$, we have $-\delta<\rho n^{-t}c_to_{1,1}+ \rho m^{-t}d_to_{1,2}<0$ when $t$ is large and hence 
    \[
        \pi_1(g\varphi_{R_t}(x)) =  \pi_1(\widetilde{a})+ \rho n^{-t}c_to_{1,1}+ \rho m^{-t}d_to_{1,2} \in (\pi_1(\widetilde{a})-\delta, \pi_1(\widetilde{a})).
    \]
    In other words, $g\varphi_{R_t}(x)$ lies in the tube $(\pi_1(\widetilde{a})-\delta,\pi_1(\widetilde{a}))\times\R$. Again, we have seen that this tube does not intersect $(K^{z}+i)\times\R\supset g\varphi_R(K)\ni g\varphi_{R_t}(x)$, which leads to a contradiction.

    \paragraph*{{\bf Case 3}: $-d'_t\geq\beta/2$ and $o_{1,2}>0$}
    Applying the argument in Case 2 to~\eqref{eq:expressionofxprime}, we have  
    \[
        \pi_1(g\varphi_{R_t}(x')) =  \pi_1(\widetilde{a})+ \rho n^{-t}c'_to_{1,1}+\rho m^{-t}d'_to_{1,2} \in (\pi_1(\widetilde{a})-\delta, \pi_1(\widetilde{a}))
    \]
    when $t$ is large, which leads to a contradiction.

    \paragraph*{{\bf Case 4}: $-d'_t\geq\beta/2$ and $o_{1,2}<0$}
    Applying the argument in Case 1 to~\eqref{eq:expressionofxprime}, we have  
    \[
        \pi_1(g\varphi_{R'_t}(x')) =  \pi_1(\widetilde{b})+\rho n^{-t}\gamma'_to_{1,1}+ \rho m^{-t}d'_to_{1,2} \in (\pi_1(\widetilde{b}), \pi_1(\widetilde{b})+\delta)
    \]
    when $t$ is large, which leads to a contradiction.
\end{proof}

The proof for the isometry case proceeds by rescaling the embedded image in a manner analogous to Lemma~\ref{lem:dep1}, with suitable modifications for the isometry assumption.

\begin{lemma}
    Let $f\in\S(\R^2)$ be an oblique isometry. If $f(K)\subset K$, then we can find some $z\in\pi_2(K)$ and a nondegenerate scaled copy $E$ of $E(m,J)$ such that $E\subset K^z$.
\end{lemma}
\begin{proof}
    Choose a nested sequence of level-$k$ rectangles $R_k$ with $R_{k+1}\subset R_k$ for $k\geq 1$. Since $f$ is an isometry, we have 
    \[
        \max\{|\pi_1(f(R_k))|,|\pi_2(f(R_k))|\} < 2m^{-k}, \quad \forall k\geq 1.
    \]
    Recall that $n^q=m^p$. By the above estimate, $f(R_{pk})$ is contained in at most four squares of side length $m^{-pk}=n^{-qk}$. More precisely, there are $x\in n^{-qk}\mathbb{Z}, y\in m^{-pk}\mathbb{Z}$ such that 
    \[
        f\varphi_{R_{pk}}(K) \subset (x,x+2n^{-qk}) \times (y,y+2m^{-pk}) =: Q_k.
    \]
    Denote by $h_k$ the homothety sending $(0,1)^2$ to the square 
    $(x,x+n^{-qk}) \times (y,y+m^{-pk})=:Q'_k$, the lower left square of $Q_k$. 
    
    Since $E(m,J)=\pi_2(K)$, it is easy to see that $d_H(\varphi_{R_{pk}}(K),\varphi_{R_{pk}}(\{0\}\times E(m,J)))\leq n^{-pk}$, where $d_H$ denotes the Hausdorff distance (see~\cite{Fal14}). So
    \begin{equation}\label{eq:tendstotheleftproj}
        d_H(h_k^{-1}f\varphi_{R_{pk}}(K), h_k^{-1}f\varphi_{R_{pk}}(\{0\}\times E(m,J))) \leq n^{(q-p)k} \to 0, \quad k\to\infty.
    \end{equation}
    Moreover, writing $O$ to be the orthogonal part of $f$, each $h_k^{-1}f\varphi_{R_{pk}}(\{0\}\times E(m,J))$ is simply a translated copy
    of $O(\{0\}\times E(m,J))$, say $O(\{0\}\times E(m,J))+a_k$. 
    So~\eqref{eq:tendstotheleftproj} becomes 
    \[
        d_H(h_k^{-1}f\varphi_{R_{pk}}(K), O(\{0\}\times E(m,J))+a_k) \leq n^{(q-p)k} \to 0, \quad k\to\infty.
    \]
    By the definition of $h_k$, the sequence $\{a_k\}_k$ lies in a bounded region in the plane. For notational simplicity, let us assume without loss of generality that $\{a_k\}_k$ is convergent, say $a_k\to a$.  Then under the Hausdorff distance,
    \[
        h_k^{-1}f\varphi_{R_{pk}}(K) \to O(\{0\}\times E(m,J))+a, \quad k\to\infty.
    \]

    Applying the rescaling argument in the proof of Lemma~\ref{lem:dep1} (now to $\{h_k^{-1}f\varphi_{R_{pk}}\}_k$ instead of $\{h_t^{-1}f^t\}_t$),
    we can find at most four points $z_{i,j}\in\pi_2(K)$, $0\leq i,j\leq 1$, such that 
    \[
        O(\{0\}\times E(m,J))+a \subset \bigcup_{0\leq i,j\leq 1} (K^{z_{i,j}}+i)\times\R.
    \]
    Since $O$ is oblique, projecting the above two sets to the $x$-axis, we see that $\bigcup_{0\leq i,j\leq 1} (K^{z_{i,j}}+i)$ contains a nondegenerate scaled copy of $E(m,J)$. Then by Baire's theorem, some $K^{z_{i,j}}+i$ must contain an interior part of that copy of $E(m,J)$. Since $E(m,J)$ is self-similar, this completes the proof.
\end{proof}

\begin{proof}[Proof of Theorem~\ref{thm:dependentaffine}: the isometry case]
    By Remark~\ref{rem:deindwork} and Proposition~\ref{prop:noobliquelines}, it suffices to consider when $N\leq n-1$ and $\#J\leq m-1$. Since $K$ is not contained in any line, $\#J>1$. Thus $E(m,J)$ is a deleted-digit set with Hausdorff dimension $\frac{\log\#J}{\log m}\in(0,1)$.

    Assume on the contrary that $f$ is oblique. Note that 
    \[
        f(K^y\times\{y\}) \subset f(K)\subset K \subset \R\times\pi_2(K) = \R\times E(m,J), \quad \forall y\in\pi_2(K).
    \]
    Therefore, $\pi_2(f(K^y\times\{y\})) \subset E(m,J)$ for all $y$. Since $f$ is oblique, the left hand side is a nondegenerate scaled copy of $K^y$. Picking $y_*$ as in the proof of Theorem~\ref{thm:dependentaffine} (see the discussion before~\eqref{eq:thenewlogformula}), we have 
    \[
        \dimh E(m,J)\geq\dimh K^{y_*}=\frac{\log N}{\log n}=:\alpha.
    \]
    
    Now apply the lemma above to obtain a point $z\in\pi_2(K)$ such that $K^z$ contains a scaled copy of $E(m,J)$. So there are $\beta_2,c_2\in\R$ with $E(m,J)\subset\beta_2K^z+c_2$. Conversely, we have seen that $E(m,J)$ also contains a scaled copy of $K^z$, say $\beta_1K^z+c_1$. Together these inclusions yield $\dimh E(m,J)=\dimh K^z=\alpha$. Since $\alpha>0$, $N\geq 2$. Since $0<\h^\alpha(E(m,J))<\infty$, $0<\h^\alpha(K^z)<\infty$. 

    Finally, note that 
    \[
        f(K^z\times\{z\})\subset\R\times E(m,J)\subset \R\times (\beta_2 K^z+c_2).
    \]
    Using this inclusion together with Corollary~\ref{cor:leftrightendpt}, we may argue exactly as in the second part of the proof of Theorem~\ref{thm:dependentaffine} (the part following~\eqref{eq:thenewlogformula}) to reach a contradiction. The only difference is that whereas previously a slice was embedded obliquely into a union of vertical lines, here it is embedded obliquely into a union of horizontal lines.
\end{proof}

%--------------------------------------
\subsection{Proof of Theorem~\ref{thm:main2}}
\label{subsec:proofofthecom}

With the non-obliqueness statement established above and Proposition~\ref{prop:allnthenrational} in hand, Theorem~\ref{thm:main2} follows quickly.

\begin{proof}[Proof of Theorem~\ref{thm:main2}]
    Let $K,n,m,f$ be as in Theorem~\ref{thm:main2}. If $\lambda=1$, there is nothing to prove. Assume that $\lambda<1$. By Theorem~\ref{thm:dependentaffine}, $f$ cannot be oblique. So there exists an integer $k\geq 1$ such that $f^k(x)=\lambda^kx+\eta$ for some $\eta\in\R^2$, that is, the orthogonal part of $f^k$ is the identity matrix. As a consequence, $f^k$ sends the $x$-axis to a  horizontal line. 
    
    Suppose on the contrary that $\frac{\log\lambda}{\log n}\notin\Q$. Applying Lemma~\ref{lem:dep1} to $f^k$ and recalling the second paragraph in Remark~\ref{rem:bairetofindint}, we obtain some $g\in\S(\R^2)$ of which the contraction ratio $\rho$ is an irrational power of $n$. Also, since $f^k$ involves neither rotations nor reflections, the same holds for $g$. 
    
    If $2\leq N\leq n-1$, we may repeat the argument used in the proof of the contracting case of Theorem~\ref{thm:notoblique} (now with direction vector $v=(1,0)$), which leads again to~\eqref{eq:thenewlogformula} and hence $\frac{\log\rho}{\log n}\in\Q$, a contradiction.

    If $N=1$, then every row contains at most one selected rectangle in the initial pattern of $K$. In particular, $\#\Lambda= \#J\leq m<n$. So there must be at least one vacant column. On the other hand, since $\#\Lambda\geq 2$ and $K$ is not contained in any vertical line, we have $\#I\geq 2$. Hence $\pi_1(K)=E(n,I)$ is a deleted-digit set with Hausdorff dimension $0<\frac{\log\#I}{\log n}\leq\frac{\log\#\Lambda}{\log n}<1$. Since $f^k(K)\subset K$, $\pi_1(f^k(K)) \subset \pi_1(K)=E(n,I)$. So 
    \[
        \lambda^kE(n,I)+\pi_1(\eta) = \lambda^k\pi_1(K)+\pi_1(\eta) = \pi_1(f^k(K)) \subset E(n,I).
    \] 
    Then it follows from Lemma~\ref{lem:fw09} that $\frac{\log\lambda}{\log n}\in\Q$. 
    
    Finally, assume that $N=n$. Let $J':=\{j\in J: \#I_{j}=n\}$. Note that 
    \begin{equation}\label{eq:ycontainsintervals}
        \{y\in\pi_2(K): K^y \text{ contains an interval}\} \subset \bigcup_{t\geq 0}\bigcup_{z\in\Z} \frac{E(m,J')+z}{n^t}.
    \end{equation} 
    The case when $\#J'=1$ has already been settled in~\cite[Section 6.6]{AH19}. If $\#J'\geq 2$ (clearly, $\#J'\leq m-1$), then $0<\dimh E(m,J')<1$. Note that for every $y\in E(m,J')$, $K^y$ is an interval of length $1$. Since $f^k(K)\subset K$, it follows from~\eqref{eq:ycontainsintervals} that 
    \[
        \lambda^k E(m,J')+\pi_2(\eta) \subset \bigcup_{t\geq 0}\bigcup_{z\in\Z} \frac{E(m,J')+z}{n^t}.
    \]
    By Baire's theorem, there exist $t,z$ such that $\frac{E(m,J')+z}{n^t}$ contains an interior part of the left hand side. Applying Lemma~\ref{lem:fw09} once more yields $\frac{\log\lambda}{\log n}\in\Q$, completing the proof.
\end{proof}

%------------------------------------
\section{The dependent self-similar case}\label{sec:selfsim}

Finally, we consider the case when $m=n$, that is, $K=K(n,\Lambda)$ is a generalized Sierpi\'nski carpet. However, there exists such a self-similar carpet admitting oblique embeddings.

\begin{example}\label{exa:nonobsiercat}
    Consider the carpet 
    \[
        K = \Big\{ \sum_{k=1}^\infty \frac{(x_k,y_k)}{4^k}: (x_k,y_k)\in\{(0,1),(1,3),(2,0),(3,2)\} \Big\}.
    \]
    See Figure~\ref{fig:specialcarpet}.
    Let $O=\big(\begin{smallmatrix} \frac{3}{5} & -\frac{4}{5}\\ -\frac{4}{5} & -\frac{3}{5} \end{smallmatrix}\big)$, which is the matrix for the reflection with respect to some line of slope $-1/2$. It is easy to check that 
    \begin{align*}
        OK &+ ( \tfrac{9}{5}, \tfrac{18}{5} ) \\
        %&= \begin{pmatrix} \frac{3}{5} & -\frac{4}{5}\\ -\frac{4}{5} & -\frac{3}{5} \end{pmatrix}\cdot K + ( \tfrac{9}{5}, \tfrac{18}{5} ) \\
        &= \Big\{ \sum_{k=1}^\infty \frac{(\frac{3}{5}x_k-\frac{4}{5}y_k+\frac{9}{5},-\frac{4}{5}x_k-\frac{3}{5}y_k+\frac{18}{5})}{4^k}: (x_k,y_k)\in\{(0,1),(1,3),(2,0),(3,2)\} \Big\} \\
        &= \Big\{ \sum_{k=1}^\infty \frac{(x_k,y_k)}{4^k}: (x_k,y_k)\in\{(0,1),(1,3),(2,0),(3,2)\} \Big\} = K.
    \end{align*}
    Therefore, $K$ admits an oblique self-embedding $f(x)=Ox+(\frac{9}{5},\frac{18}{5})$.
    \begin{figure}[htbp]
        \centering
        \includegraphics[width=3.8cm]{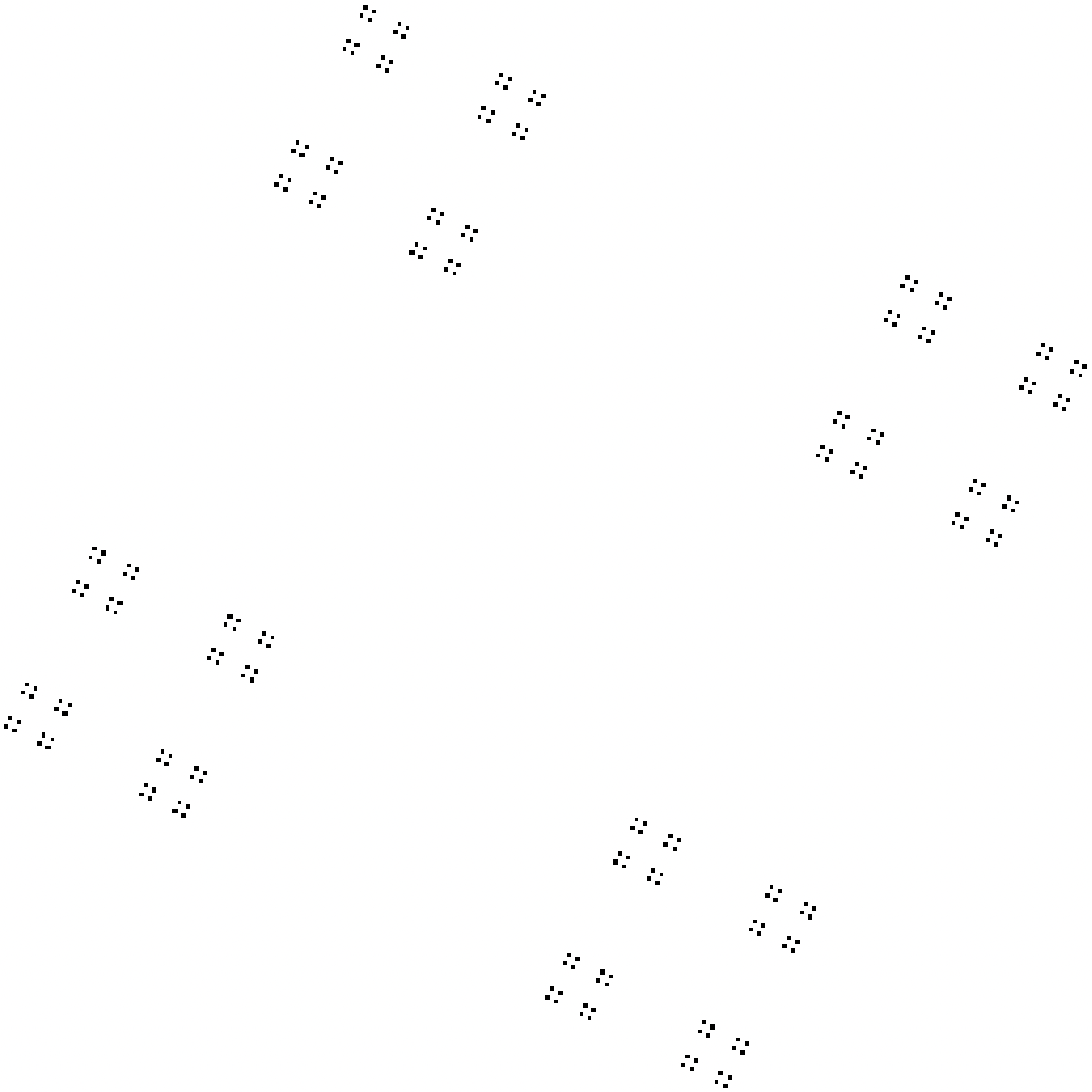}
        \caption{A self-similar carpet admitting oblique self-embeddings}
        \label{fig:specialcarpet}
    \end{figure}
\end{example}

In the rest of this section, we only consider rotational self-embeddings of generalized Sierpi\'nski carpets and hope to extract information about the rotation angle. Recall the notation~\eqref{eq:varphiij}. The following result is well known (see e.g.~\cite{SW99}).

\begin{lemma}\label{lem:convandfixpt}
    If $K$ is not contained in any line, then the convex hull of $K$ is a polygon. Moreover, every vertex of this polygon is the fixed point of $\varphi_{i,j}$ for some $(i,j)\in\Lambda$.
\end{lemma}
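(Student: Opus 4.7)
The plan is to compute $C:=\conv(K)$ explicitly via its support function, which immediately yields both conclusions of the lemma. Self-similarity $K=\bigcup_{(i,j)\in\Lambda}\varphi_{i,j}(K)$, together with the identity $\conv(A\cup B)=\conv(\conv(A)\cup\conv(B))$ and the fact that each $\varphi_{i,j}$ is affine, gives the fixed-point equation
\[
    C=\conv\Big(\bigcup_{(i,j)\in\Lambda}\varphi_{i,j}(C)\Big).
\]

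I would then introduce the support function $F(\theta):=\max_{x\in C}\langle x,\theta\rangle$ for $\theta\in\R^2$, and use the explicit form $\varphi_{i,j}(x)=x/n+(i,j)/n$ to write
\[
    \max_{x\in\varphi_{i,j}(C)}\langle x,\theta\rangle=\frac{1}{n}F(\theta)+\frac{1}{n}\langle (i,j),\theta\rangle.
\]
Taking the maximum over $(i,j)\in\Lambda$ on both sides of the fixed-point equation yields
\[
    F(\theta)=\frac{1}{n}F(\theta)+\frac{1}{n}\max_{(i,j)\in\Lambda}\langle (i,j),\theta\rangle,
\]
so $F(\theta)=\frac{1}{n-1}\max_{(i,j)\in\Lambda}\langle (i,j),\theta\rangle$. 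This is precisely the support function of the finite set $\frac{1}{n-1}\Lambda$, and hence of its convex hull $\frac{1}{n-1}\conv(\Lambda)$.

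By the classical fact that a compact convex set in $\R^2$ is determined by its support function, we conclude $C=\frac{1}{n-1}\conv(\Lambda)$. Since $K$ is not contained in any line neither is $\Lambda$, so $C$ is a genuine (2-dimensional) polygon, and its vertices form a subset of $\{(i,j)/(n-1):(i,j)\in\Lambda\}$. A direct calculation shows that the unique fixed point of $\varphi_{i,j}$ is exactly $(i,j)/(n-1)$, which establishes the ``moreover'' part.

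The only conceptual step requiring care is the fixed-point identity for $C$; it uses that $\conv$ is continuous under the Hausdorff metric and distributes over finite unions up to re-convexification. The remainder is essentially a two-line support-function calculation, so I do not anticipate any genuine obstacle.
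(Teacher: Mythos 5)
Your argument is correct, and it is worth noting that the paper itself does not supply a proof of this lemma: it simply cites Strichartz and Wang \cite{SW99}, where the analogous result is proved for self-affine tiles. Your support-function derivation is therefore a genuine, self-contained alternative, tailored to the self-similar setting $m=n$ in which the lemma is actually invoked (Section 5 of the paper). The key identity $\max_{x\in\varphi_{i,j}(C)}\langle x,\theta\rangle=\frac{1}{n}F(\theta)+\frac{1}{n}\langle(i,j),\theta\rangle$ does use that all the $\varphi_{i,j}$ contract by the same factor $1/n$ in every direction, so this particular computation would not carry over verbatim to the genuinely self-affine case $n>m$; but that is immaterial here, since the lemma is only used for generalized Sierpi\'nski carpets. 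Your approach also yields a bit more than what is claimed: the explicit formula $\conv(K)=\frac{1}{n-1}\conv(\Lambda)$, from which both assertions (polygonality, and that the vertices are among the fixed points $(i,j)/(n-1)$ of the maps $\varphi_{i,j}$) follow immediately, together with the equivalence between ``$K$ lies on a line'' and ``$\Lambda$ lies on a line.'' The one step you flag as delicate --- the fixed-point equation $C=\conv\bigl(\bigcup_{(i,j)\in\Lambda}\varphi_{i,j}(C)\bigr)$ --- is in fact routine, following directly from $K=\bigcup\varphi_{i,j}(K)$, the identity $\conv(A\cup B)=\conv(\conv A\cup\conv B)$, and the commutation of affine maps with $\conv$; no Hausdorff-metric continuity is needed.
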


From now on, let us fix a generalized Sierpi\'nski carpet $K$ satisfying the strong separation condition and not supported in any line. Denote by $P$ the convex hull of $K$. By the above lemma, $P$ is a polygon with finitely many vertices, say $v_1,\ldots,v_p$. For $1\leq t\leq p$, we write $\alpha_t$ to be the interior angle of $P$ at the vertex $v_t$, and write $\ell_t$ to be the edge of $P$ joining $v_t$ and $v_{t+1}$ (with indices taken cyclically). We claim that every $v_t$ is not isolated in $K$ even along its adjacent edges.

\begin{corollary}\label{cor:limitptonell}
    For $1\leq t\leq p$, there exists $\{x_k\}_{k=1}^\infty\subset\ell_t\cap K$ such that $x_k\to v_t$ as $k\to\infty$. 
\end{corollary}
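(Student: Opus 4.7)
The plan is to exploit two features of the self-similar ($m=n$) setting: the maps $\varphi_{i,j}$ have trivial orthogonal part (each is a pure homothety with ratio $1/n$), and, by Lemma~\ref{lem:convandfixpt}, every vertex of $P$ is the fixed point of some $\varphi_{i,j}$ with $(i,j)\in\Lambda$. Let $\varphi_{i,j}$ denote a map fixing $v_t$, and observe that the adjacent vertex $v_{t+1}$ is also a fixed point of some IFS map $\varphi_{i',j'}$; since $\varphi_{i',j'}(K)\subset K$ and $K$ is compact, $v_{t+1}=\lim_k\varphi_{i',j'}^k(z)\in K$ for any $z\in K$.

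Next, because $\varphi_{i,j}$ is a homothety centred at $v_t$ with ratio $1/n$, it maps the ray from $v_t$ through $v_{t+1}$ into itself, contracting linearly. In particular, it sends $\ell_t=[v_t,v_{t+1}]$ into $\ell_t$, and therefore the sequence
\[
    x_k := \varphi_{i,j}^k(v_{t+1}) = v_t + n^{-k}(v_{t+1}-v_t), \quad k\geq 1,
\]
lies entirely on $\ell_t$. Moreover, $v_{t+1}\in K$ and $\varphi_{i,j}(K)\subset K$ imply $x_k\in K\cap\ell_t$ for every $k$, and clearly $x_k\to v_t$ as $k\to\infty$.

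There is no substantive obstacle here: the corollary is essentially immediate once one combines the ``vertex equals fixed point'' information from Lemma~\ref{lem:convandfixpt} with the observation that, unlike in the Bedford--McMullen regime, each self-similar map contracts linearly toward its fixed point along every ray, so iteration along an edge stays inside that edge.
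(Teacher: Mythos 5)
Your proof is correct and is essentially the paper's own argument: pick the IFS map $\varphi_{i,j}$ that fixes $v_t$ (via Lemma~\ref{lem:convandfixpt}) and iterate it on $v_{t+1}$, obtaining $\varphi_{i,j}^k(v_{t+1}) = v_t + n^{-k}(v_{t+1}-v_t)\in\ell_t\cap K$. You additionally spell out why $v_{t+1}\in K$, a point the paper leaves implicit.
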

\begin{proof}
    Fix any $1\leq t\leq p$. By Lemma~\ref{lem:convandfixpt}, there is some $(i,j)\in\Lambda$ such that $v_t$ is the fixed point of $\varphi_{i,j}$. Then 
    \[
        v_t -\varphi_{i,j}^k(v_{t+1}) = \varphi_{i,j}^k(v_{t})-\varphi_{i,j}^k(v_{t+1}) = n^{-k}(v_t-v_{t+1}), \quad \forall k\geq 1.
    \]
    In particular, $\varphi_{i,j}^k(v_{t+1})$ lies on the edge $\ell_t$ and converges to $v_t$ as $k\to\infty$.
\end{proof}

\begin{corollary}\label{cor:rationaltangent}
    For each $1\leq t\leq p$, $\tan\alpha_t\in\Q$.
\end{corollary}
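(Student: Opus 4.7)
The plan is to exploit the rationality of fixed points of the generating maps. By Lemma~\ref{lem:convandfixpt}, each vertex $v_t$ of $P$ is the fixed point of some $\varphi_{i,j}$ with $(i,j)\in\Lambda$. Solving $\varphi_{i,j}(x,y) = ((x+i)/n,(y+j)/n)=(x,y)$ yields the unique fixed point $(\tfrac{i}{n-1},\tfrac{j}{n-1})\in\Q^2$. Hence every vertex $v_t$ has rational coordinates.

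Consequently, the two edge vectors emanating from $v_t$, namely $u_t := v_{t-1}-v_t$ and $w_t := v_{t+1}-v_t$ (with indices taken modulo $p$), both lie in $\Q^2$. Writing $u_t=(a,b)$ and $w_t=(c,d)$, and recalling that $\alpha_t$ is the interior angle of $P$ at $v_t$, elementary planar trigonometry gives
\[
\cos\alpha_t = \frac{ac+bd}{|u_t|\,|w_t|}, \qquad \sin\alpha_t = \frac{|ad-bc|}{|u_t|\,|w_t|},
\]
so that
\[
\tan\alpha_t = \frac{|ad-bc|}{ac+bd} \in \Q
\]
whenever the denominator is nonzero. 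The exceptional case $\alpha_t=\pi/2$ (i.e.\ $ac+bd=0$) requires the projective convention $\tan(\pi/2)=\infty$, but it causes no trouble for the intended application in Theorem~\ref{thm:main3}, since a right angle is itself the simplest possible input to the Niven-type argument one wishes to invoke later.

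There is no substantive obstacle in this corollary: the statement follows in one step from Lemma~\ref{lem:convandfixpt} together with the elementary fact that the angle between two nonzero rational vectors in the plane has rational tangent (when defined). The only point worth flagging is the right-angle edge case above, which should be treated by a convention rather than a separate argument.
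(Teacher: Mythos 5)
Your proof is correct and follows essentially the same path as the paper: derive rationality of the vertices from Lemma~\ref{lem:convandfixpt} (so $v_t=(\tfrac{i_t}{n-1},\tfrac{j_t}{n-1})\in\Q^2$), then conclude rationality of $\tan\alpha_t$ from the rationality of the edge vectors. Your explicit dot/cross-product computation and your remark about the $\alpha_t=\pi/2$ degenerate case are a small but welcome sharpening of the paper's one-line appeal to the fact that each edge makes a rational-tangent angle with the $x$-axis (which implicitly uses the tangent subtraction formula and carries the same edge cases).
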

\begin{proof}
    For $1\leq t\leq p$, we again find by Lemma~\ref{lem:convandfixpt} some $(i_t,j_t)\in\Lambda$ such that $v_t$ is the fixed point of $\varphi_{i_t,j_t}$. Therefore, $v_t=(\frac{i_t}{n-1},\frac{j_t}{n-1})$. This implies that the angle made by every edge $\ell_t$ and the $x$-axis has a rational tangent. It follows directly that $\tan\alpha_t\in\Q$ for all $t$.
\end{proof}

A ``local openness'' result as follows will be of great help.

\begin{lemma}[{\cite[Theorems~4.5 and 4.9]{EKM10}}]\label{lem:ekmtworesults}
    Let $f\in\S(\R^2)$. If $f(K)\subset K$, then there is an open set $U\subset\R^2$ such that $U\cap f(K)=U\cap K\neq\varnothing$. Moreover, $\{O^t\}_{t=1}^\infty$ is a finite collection, where $O$ denotes the orthogonal part of $f$.
\end{lemma}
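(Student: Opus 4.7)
My plan is to prove the two assertions separately.

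For the first assertion, I would use that the SSC makes $K$ Ahlfors $s$-regular (with $s=\dimh K$): there exist $c_1,c_2>0$ with $c_1 r^s\le\h^s(K\cap B(x,r))\le c_2 r^s$ for all $x\in K$ and $0<r\le\diam K$. Since $f(K)$ is a similar copy of $K$, it is an $s$-set of the same regularity. Apply the Lebesgue density theorem to the Borel inclusion $f(K)\subset K$ with respect to $\h^s|_K$: at $\h^s|_K$-a.e.\ $y_0\in f(K)$, $\h^s((K\setminus f(K))\cap B(y_0,r))/r^s\to 0$ as $r\to 0$. Combine this with the SSC cylinder structure: for $r=n^{-k}$ small, $K\cap B(y_0,r)$ splits into a bounded number of level-$k$ cylinders $\varphi_J(K)$ of $\h^s$-mass comparable to $r^s$. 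If some such $\varphi_J(K)$ were not contained in $f(K)$, it would supply positive $\h^s$-mass in $(K\setminus f(K))\cap B(y_0,r)$ of order $r^s$, violating the density-zero condition. Thus $K\cap B(y_0,r_0)\subset f(K)$ for some $r_0>0$, and combined with $f(K)\subset K$ this gives $U\cap f(K)=U\cap K\ne\varnothing$ with $U=B(y_0,r_0)$.

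For the second assertion, I would apply Part~1 to each iterate $f^t$ (which also sends $K$ into $K$): there is an open $U_t$ with $U_t\cap f^t(K)=U_t\cap K\ne\varnothing$. My plan is to exploit the corner structure coming from Lemma~\ref{lem:convandfixpt}: the hull $P=\conv(K)$ is a polygon with finitely many vertices $v_1,\dots,v_p$ and interior angles $\alpha_1,\dots,\alpha_p$. Near each cylinder-vertex point $\varphi_I(v_k)\in K$ (where $\varphi_I$ is any composition of IFS maps), $K$ locally fits inside a cone of opening $\alpha_k$ with a specific bisector direction; because every $\varphi_{i,j}$ has identity orthogonal part, this bisector depends on $v_k$ but not on $I$. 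I would arrange the density point from Part~1 to be a cylinder-vertex of $f^t(K)$, say $y_0=f^t\varphi_J(v_j)$: the local cone of $f^t(K)$ at $y_0$ then has opening $\alpha_j$ with bisector obtained by applying $O^t$ to the bisector at $v_j$. For the equality $U_t\cap f^t(K)=U_t\cap K$ to hold, this cone must also be the local cone of $K$ at $y_0$, forcing $y_0$ to be a cylinder-vertex $\varphi_{I_t}(v_{k_t})$ of $K$, with $\alpha_j=\alpha_{k_t}$ and matching bisector directions. The bisector matching pins down $O^t$ from the finite data $(j,k_t,\alpha_j)$, so $O^t$ lies in a finite subset of $O(2)$ independent of $t$.

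The main obstacle is the cone-matching step: one must check that at a cylinder-vertex point $K$ genuinely has a cone gap of the expected opening (no piece of $K$ coming from another direction fills it in), and that the density points supplied by Part~1 can be arranged to fall on cylinder-vertex points of the iterated image $f^t(K)$. Both rest on careful use of the SSC to rule out overlapping accumulations and on the fact, guaranteed by Corollary~\ref{cor:limitptonell}, that vertex points genuinely are limit points of $K$ along the incident edges. The density argument itself is standard once Ahlfors regularity and the cylinder decomposition are in hand.
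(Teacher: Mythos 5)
First, a point of comparison: the paper does not prove this lemma at all; it is imported verbatim from \cite{EKM10} (Theorems 4.5 and 4.9). So any blind attempt has to reconstruct the EKM arguments, and yours does not. The decisive step of your first part is flawed: from $\varphi_J(K)\not\subset f(K)$ you may only conclude $\h^s(\varphi_J(K)\setminus f(K))>0$ (since $f(K)$ is closed and nonempty relatively open subsets of the Ahlfors regular set $\varphi_J(K)$ have positive measure), \emph{not} that this mass is comparable to $r^s$ with a scale-independent constant. The density theorem only gives $\h^s((K\setminus f(K))\cap B(y_0,r))=o(r^s)$, and ``positive but $o(r^s)$ at every scale'' is perfectly consistent with every cylinder containing $y_0$ failing to be inside $f(K)$, so no contradiction arises. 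The failure is structural rather than cosmetic: your argument uses nothing about $f(K)$ beyond compactness and positive $\h^s$-measure, and a compact subset of $K$ of positive measure can have empty relative interior (a fat-Cantor-type subset of $K$), so no density argument of this shape can produce the open set $U$. The real proof must exploit that $f(K)$ is itself a self-similar set with its own cylinder structure, which is exactly where the work in \cite{EKM10} lies.

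The second part inherits this gap (you invoke Part 1 for every iterate $f^t$), and its own crux is left open, as you yourself concede. Producing a cylinder $f^t\varphi_J(K)$ inside $U_t$ and hence a corner point $y_0=f^t\varphi_J(v_j)\in K$ is fine, and by the strong separation condition together with Corollary~\ref{cor:limitptonell} the local cone of $f^t(K)$ at $y_0$ is indeed the $O^t$-rotated cone of opening $\alpha_j$. But the claim that the equality $U_t\cap K=U_t\cap f^t(K)$ then forces $y_0$ to be a cylinder-vertex $\varphi_{I}(v_{k})$ of $K$ with matching angle and bisector is asserted, not proved: a point of $K$ can present a small local cone at every scale without ever being the image of a hull vertex under a cylinder map, and ruling this out requires a genuine rigidity or tangent-structure argument (for instance, exploiting that the tangent cone of $K$ at all shifted codings $\varphi_{I_k}^{-1}(y_0)$ is one and the same cone), which is missing. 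As it stands the proposal is a strategy sketch with two unfilled holes; since the paper simply cites \cite{EKM10} for both assertions, the correct course is either to cite those theorems or to reproduce their (substantially longer) proofs.
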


Due to the local openness property mentioned above, there is a small region where $f(K)$ and $K$ are identical. Within this region, we can find a tiny copy of $K$ that lies entirely in $f(K)$. In particular, this copy must be mapped from another small copy of $K$. By examining how the vertices and edges of the associated convex hulls behave under this transformation, we can align a vertex of one copy with a vertex of another, as well as their adjacent edges. A more precise statement is as follows.

\begin{proposition}\label{prop:rationalangles}
    If $f\in\S(\R^2)$ sends $K$ into itself, then for every $1\leq t\leq p$, we can find $t'$ and two squares $R,R'$ (of level $k$ and $k'$, respectively) such that the following properties hold:
    \begin{enumerate}
        \item $f\varphi_{R'}(v_{t'})=\varphi_R(v_t)$,
        \item either $f\varphi_{R'}(\ell_{t'})\subset\varphi_R(\ell_t)$ or $f\varphi_{R'}(\ell_{t'+1})\subset\varphi_{R}(\ell_{t})$.
    \end{enumerate}
\end{proposition}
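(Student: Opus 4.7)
The plan is to use Lemma~\ref{lem:ekmtworesults} to localize the self-embedding near a single cylinder, and then extract the vertex correspondence from the extreme-point structure of $P$.

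First I would apply Lemma~\ref{lem:ekmtworesults} to obtain an open set $U_0$ with $U_0\cap f(K)=U_0\cap K\neq\varnothing$. Picking any $x_0\in U_0\cap K$, for $k$ sufficiently large the level-$k$ square $R$ containing $x_0$ satisfies $\varphi_R(K)\subset U_0$. Using the strong separation between distinct level-$k$ cylinders, I would refine $U_0$ to an open set $U$ (its intersection with a small tubular neighborhood of $\varphi_R(K)$ that meets no other level-$k$ cylinder) so that $U\cap K=U\cap f(K)=\varphi_R(K)$. Given the vertex $v_t$, one has $\varphi_R(v_t)\in\varphi_R(K)\subset f(K)$, so there is a unique $w\in K$ with $f(w)=\varphi_R(v_t)$; continuity places $w$ in the open set $f^{-1}(U)$, and strong separation lets us pick $k'$ large enough that the level-$k'$ square $R'$ containing $w$ satisfies $\varphi_{R'}(K)\subset f^{-1}(U)$, giving $f(\varphi_{R'}(K))\subset U\cap f(K)=\varphi_R(K)$.

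Set $g:=\varphi_R^{-1}\circ f\circ\varphi_{R'}$. Then $g\in\mathcal{S}(\mathbb{R}^2)$ satisfies $g(K)\subset K$, and $g(z)=v_t$ for $z:=\varphi_{R'}^{-1}(w)\in K$. Taking convex hulls gives $g(P)\subset P$; since $v_t\in g(P)\subset P$ and $v_t$ is an extreme point of $P$, it is automatically an extreme point of the sub-polygon $g(P)$. The vertices of $g(P)$ being exactly $\{g(v_1),\ldots,g(v_p)\}$, one has $v_t=g(v_{t'})$ for some $t'$, and the injectivity of $g$ forces $z=v_{t'}$, so $w=\varphi_{R'}(v_{t'})$. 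This proves part~(1).

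For part~(2), both edges $g(\ell_{t'-1})$ and $g(\ell_{t'})$ of $g(P)$ at the shared vertex $v_t$ must lie inside the wedge of $P$ at $v_t$ because $g(P)\subset P$. By Corollary~\ref{cor:limitptonell} applied inside $g(K)$, the sequence $g(\varphi_{i_{t'},j_{t'}}^{m}(v_{t'+1}))$ lies in $g(K)\subset K$ on the ray $g(\ell_{t'})$ and accumulates at $v_t$. I would complement this by pulling back through $f^{-1}$ the two sequences in $\varphi_R(K)\cap\varphi_R(\ell_{t-1})$ and $\varphi_R(K)\cap\varphi_R(\ell_t)$ that Corollary~\ref{cor:limitptonell} produces at $\varphi_R(v_t)$, obtaining two sequences in $\varphi_{R'}(K)$ lying on the lines $f^{-1}(\varphi_R(\ell_{t-1}))$ and $f^{-1}(\varphi_R(\ell_t))$ through $\varphi_{R'}(v_{t'})$ and accumulating there. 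Matching these with the two edge-aligned sequences Corollary~\ref{cor:limitptonell} furnishes at $\varphi_{R'}(v_{t'})$, and using the rigidity from strong separation together with the finite-order constraint on the orthogonal part of $f$ from Lemma~\ref{lem:ekmtworesults}, should force each of $f^{-1}(\varphi_R(\ell_{t-1})), f^{-1}(\varphi_R(\ell_t))$ to coincide with one of $\varphi_{R'}(\ell_{t'-1}),\varphi_{R'}(\ell_{t'})$, which translates back to the inclusion required in (2). The main obstacle I expect is precisely this rigidity step, since in general $\varphi_{R'}(K)$ can have sequences accumulating at its vertex along lines other than the two adjacent edges (as happens e.g.\ along diagonals of the standard Sierpi\'nski carpet), and ruling out these spurious oblique directions must exploit the global inclusion $f(\varphi_{R'}(K))\subset\varphi_R(K)$ rather than purely local information.
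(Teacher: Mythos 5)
Your proof of part~(1) is correct, and it takes a genuinely different route from the paper's. The paper does not build a tubular neighborhood; instead it constructs a specific level-$k$ square $R$ with $\varphi_R(K)\subset f(K)$ whose relevant corner has empty level-$(k+1)$ neighbors in $K$ on two sides (the ``left-bottom hole'' construction, which occupies the final paragraph of the proof). This produces a small ball around $\varphi_R(v_t)$ on which $K$ coincides with $K\cap\varphi_R(P)$, and the inclusion $f\varphi_{R'}(P)\subset\varphi_R(P)$ is then squeezed out by intersecting with that ball. Your version instead uses the strong separation condition head-on: shrinking $U_0$ to a tubular neighborhood disjoint from the other level-$k$ cylinders gives $U\cap K=U\cap f(K)=\varphi_R(K)$, hence $f\varphi_{R'}(K)\subset\varphi_R(K)$ for large $k'$, and then $g=\varphi_R^{-1}f\varphi_{R'}$ maps $K$ into $K$ and $P$ into $P$, so the extreme-point argument identifies $z$ with a vertex. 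This is cleaner: it bypasses the case analysis for the existence of $R$ and buys the sharper inclusion $f\varphi_{R'}(K)\subset\varphi_R(K)$ rather than only $f\varphi_{R'}(K)\subset\varphi_R(P)$.

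Part~(2) of your proposal, however, has a real gap, and the ``matching the sequences'' step is not the right move. You correctly observe that the two pulled-back sequences $f^{-1}(\varphi_R(x_p))$ lie in $\varphi_{R'}(K)$ for large $p$ (this follows from strong separation, since they tend to $\varphi_{R'}(v_{t'})\in\varphi_{R'}(K)$), but you then try to force these pulled-back lines to coincide with $\varphi_{R'}(\ell_{t'-1})$ or $\varphi_{R'}(\ell_{t'})$, and you yourself point out why that cannot be deduced from local data alone. The correct argument is shorter and does not need any such matching or any rigidity of the orthogonal part. Suppose~(2) fails. You already have $f\varphi_{R'}(P)\subset\varphi_R(P)$, with the two polygons sharing the vertex $\varphi_R(v_t)$. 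If~(2) fails, neither edge of $f\varphi_{R'}(P)$ at this vertex lies along an edge of $\varphi_R(P)$, so the wedge of $f\varphi_{R'}(P)$ at $\varphi_R(v_t)$ is strictly interior to the wedge of $\varphi_R(P)$; in particular, one of the boundary edges of $\varphi_R(P)$, say $\varphi_R(\ell_t)$, meets $f\varphi_{R'}(P)$ only at the vertex. Take the sequence $x_p\in\ell_t\cap K$ with $x_p\to v_t$ from Corollary~\ref{cor:limitptonell}. Then $\varphi_R(x_p)\in\varphi_R(K)\subset f(K)$ accumulates at $\varphi_R(v_t)\in f\varphi_{R'}(K)$, yet no $\varphi_R(x_p)$ lies in $f\varphi_{R'}(P)\supset f\varphi_{R'}(K)$. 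Hence $\dist\bigl(f\varphi_{R'}(K),\,f(K)\setminus f\varphi_{R'}(K)\bigr)=0$, contradicting strong separation. In your pullback language: if~(2) fails, then $f^{-1}(\varphi_R(\ell_t))$ meets $\varphi_{R'}(P)$ only at $\varphi_{R'}(v_{t'})$, so the pulled-back points $f^{-1}(\varphi_R(x_p))$ lie in $K\setminus\varphi_{R'}(K)$ yet accumulate at $\varphi_{R'}(v_{t'})$. Your claim that the pullback lands in $\varphi_{R'}(K)$ is exactly what contradicts the failure of~(2); the two observations together \emph{are} the proof, and no further rigidity step is needed.
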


\begin{proof}
    Fix $1\leq t\leq p$. Without loss of generality, assume that $v_t$ lies in the ``left bottom area'' of $[0,1]^2$, that is, $v_t\in[0,1/2]^2$ (other cases can be similarly discussed). Let us pick an integer $k$ and a level-$k$ square $R=[\frac{i}{n^k},\frac{i+1}{n^k}]\times[\frac{j}{n^k},\frac{j+1}{n^k}]$ such that $\varphi_R(K)\subset f(K)$ and
    \[
        K \cap \big( (\tfrac{i-1}{n^k},\tfrac{i}{n^k})\times(\tfrac{j-1}{n^k},\tfrac{j+1}{n^k}) \big) = \varnothing \quad\text{and}\quad K \cap \big( (\tfrac{i}{n^k},\tfrac{i+1}{n^k})\times(\tfrac{j-1}{n^k},\tfrac{j}{n^k}) \big) = \varnothing.
    \]
    The existence of $R$ will be proved at the end of the proof. As a consequence, there is a small $r>0$ such that 
    \begin{equation}\label{eq:localareaofphirvt}
        K \cap B(\varphi_R(v_t),r) = \varphi_R(K) \cap B(\varphi_R(v_t),r) = K \cap \varphi_R(P) \cap B(\varphi_R(v_t),r),
    \end{equation}
    where $B(x,r)$ denotes the ball centered at $x$ and of radius $r$. Since $\varphi_R(v_t)\in\varphi_R(K)\subset f(K)$, there is a level-$k'$ square $R'$ (for some large $k'$) such that $\varphi_R(v_t)\in f\varphi_{R'}(K)$. We claim that $\varphi_R(v_t)$ is a vertex of the polygon $f\varphi_{R'}(P)$ if $k'$ is picked large. In particular, there exists $t'$ such that $\varphi_R(v_t)=f\varphi_{R'}(v_{t'})$, which establishes (1). 

    To see the claim, note that since $P$ is the convex hull of $K$, $\varphi_R(v_t)\in f\varphi_{R'}(K)\subset f\varphi_{R'}(P)$. So if $k'$ is large enough, then 
    \begin{equation}\label{eq:fvarphirinball}
        f\varphi_{R'}(P) \subset B(\varphi_{R}(v_t),r).
    \end{equation}
    Therefore,
    \begin{align*}
        f\varphi_{R'}(K) &\subset f(K) \cap f\varphi_{R'}(P) && \text{(since $K\subset P$)} \\
        %&\subset K\cap f\varphi_{R'}(P) &&\text{(since )} \\
        &\subset K\cap B(\varphi_R(v_t),r) && \text{(by $f(K)\subset K$ and~\eqref{eq:fvarphirinball})} \\
        &\subset \varphi_R(P) &&\text{(by~\eqref{eq:localareaofphirvt})}.
    \end{align*}
    So $f\varphi_{R'}(P)\subset\varphi_R(P)$. Since $f\varphi_{R'}(P)$, $\varphi_R(P)$ are both polygons and $\varphi_R(v_t)\in f\varphi_{R'}(P)$, $\varphi_R(v_t)$ must be a vertex of $f\varphi_{R'}(P)$.
    
    If (2) is false, we see by (1) that $\varphi_{R}(\ell_t)$ meets $f\varphi_{R'}(P)$ only at the vertex $\varphi_R(v_t)$. By Corollary~\ref{cor:limitptonell}, there is a sequence $\{x_p\}_{p=1}^\infty\subset\ell_t\cap K$ such that $x_p\to v_t$ as $p\to\infty$. So $\varphi_R(x_p)\to\varphi_R(v_t) \in f\varphi_{R'}(K)$ and $\varphi_R(x_p)\in\varphi_R(\ell_t)$. Since $\varphi_{R}(\ell_t)$ meets $f\varphi_{R'}(P)$ only at the vertex $\varphi_R(v_t)$, $\varphi_R(x_p)\notin f\varphi_{R'}(K)$. But recalling that $\varphi_R(K)\subset f(K)$, we have $\varphi_R(x_p)\in f(K)$ for all $p$. This implies that 
    \[
        \dist\big( f\varphi_{R'}(K), f(K)\setminus f\varphi_{R'}(K) \big) \leq \inf_p\dist(f\varphi_{R'}(K),\{\varphi_R(x_p)\})= 0, 
    \]
    which contradicts the strong separation condition.

    It remains to prove the existence of $R$. By Lemma~\ref{lem:ekmtworesults}, there is some $k_0$ and a level-$k_0$ square $Q$ such that $\varphi_Q(K)\subset f(K)$.  Since $K$ is not contained in any line, we have $\#I\geq 2$ and $\#J\geq 2$ (here we abuse slightly these notations from Section 2.1). Iterating the initial pattern if necessary, we may assume that there are $(i_1,j_1),(i_2,j_2),(i_3,j_3)\in\Lambda$ such that
    \[
        (i_1,j_1-1), (i_2-1,j_2),(i_3-1,j_3-1) \in \{0,1,\ldots,n-1\}^2\setminus\Lambda.
    \]
    Also, write $\underline{i}:=\min I$ and $\underline{j}:= \min J$. 
    
    If there is some $1\leq i\leq n-1$ such that $(i,\underline{j})\in\Lambda$ but $(i-1,\underline{j})\notin\Lambda$, then it suffices to take $R=\varphi_Q\varphi_{i_1,j_1}\varphi_{i,\underline{j}}([0,1]^2)$. If such a digit does not exist, then $(0,\underline{j})\in\Lambda$. If $I_{\underline{j}}\supsetneq\{0\}$, then $2\leq\#I_{\underline{j}}\leq n-1$, where the upper bound holds because $K$ does not contain horizontal segments. So there are at least $4$ level-$2$ squares in the bottom row and it is not hard to see that at least one of them, say $Q'$, satisfies that the $n^2$-adic square left adjacent to $Q'$ is contained in $[0,1]^2$ but unselected. Then it suffices to take $R=\varphi_Q\varphi_{i_1,j_1}(Q')$. So we may assume that $I_{\underline{j}}=\{0\}$. Similarly (looking at $(i_2,j_2)$ instead), it suffices to consider when $J_{\underline{i}}=\{0\}$. 
    
    Since $(0,\underline{j})\in\Lambda$, $\underline{i}=0$. So $(0,0)\in\Lambda$ but $\{(0,j),(j,0):j\geq 1\}\cap\Lambda=\varnothing$. Then it is easy to see that taking $R=\varphi_Q\varphi_{i_3,j_3}\varphi_{0,0}([0,1]^2)$ would work. 
\end{proof}

\begin{lemma}[Niven's theorem]\label{lem:niven}
    If $\theta\in\pi\Q$ and $\tan\theta\in\Q$, then $\tan\theta\in\{0,\pm 1\}$.
\end{lemma}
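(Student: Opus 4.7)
The plan is to use the half--angle--type identity
\[
    e^{2i\theta} = \frac{1+i\tan\theta}{1-i\tan\theta},
\]
which converts the hypothesis on $\theta$ and $\tan\theta$ into a statement about Gaussian rationals, and then to classify the roots of unity in $\mathbb{Q}(i)$.

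First, set $t:=\tan\theta$ and $w:=e^{2i\theta}$. The hypothesis $\theta\in\pi\mathbb{Q}$ says that $w$ is a root of unity; the hypothesis $t\in\mathbb{Q}$ combined with the above identity says $w\in\mathbb{Q}(i)$. So I will verify that the only roots of unity lying in $\mathbb{Q}(i)$ are $1,-1,i,-i$. If $w$ is a primitive $n$-th root of unity with $w\in\mathbb{Q}(i)$, then $\mathbb{Q}(\zeta_n)\subseteq\mathbb{Q}(i)$, hence $\varphi(n)=[\mathbb{Q}(\zeta_n):\mathbb{Q}]$ divides $[\mathbb{Q}(i):\mathbb{Q}]=2$. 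This forces $n\in\{1,2,3,4,6\}$. Among these, $n=3,6$ give $\mathbb{Q}(\zeta_n)=\mathbb{Q}(\sqrt{-3})\not\subseteq\mathbb{Q}(i)$, so only $n\in\{1,2,4\}$ survive, yielding $w\in\{1,-1,i,-i\}$.

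Finally, I plug each possibility into $w(1-it)=1+it$. The value $w=1$ gives $t=0$; $w=-1$ gives $1=-1$, which is impossible (this corresponds to $\cos\theta=0$, where $\tan\theta$ is undefined, so this case cannot arise under the hypotheses anyway); $w=i$ leads to $(1-t)(1-i)=0$, so $t=1$; and $w=-i$ leads to $(1+t)(1+i)=0$, so $t=-1$. Thus $\tan\theta\in\{0,\pm 1\}$, as claimed.

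The entire argument is short, and the main (small) obstacle is really just the classification of roots of unity inside $\mathbb{Q}(i)$, which is a standard cyclotomic fact; once that is in hand, the rest is a routine case check on the four possible values of $w$.
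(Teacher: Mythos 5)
Your proof is correct. The paper itself does not prove Niven's theorem; it simply cites Paolillo and Vincenzi~\cite{PV21} for a proof. Your argument is a clean, self-contained alternative: converting the hypotheses into the statement that $w=e^{2i\theta}$ is a root of unity lying in $\Q(i)$, invoking the degree formula $\varphi(n)=[\Q(\zeta_n):\Q]$ to cut down to $n\in\{1,2,3,4,6\}$, eliminating $n=3,6$ since $\Q(\sqrt{-3})\not\subset\Q(i)$, and then solving $w(1-it)=1+it$ for each surviving $w$. This is the standard algebraic-number-theoretic proof; the cited reference instead works with rationality of trigonometric values via more elementary (Chebyshev-polynomial-style) recursions. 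Each route is fine; yours has the advantage of being short and structurally transparent, at the cost of assuming the cyclotomic degree formula. Two tiny cosmetic points: in the case $w=i$ the factorization is $(1-t)(1-i)=0$ only up to sign (one actually gets $(1-t)(i-1)=0$, same conclusion), and likewise for $w=-i$; and the case $w=-1$ is vacuous exactly as you observe, since $\tan\theta$ would be undefined there.
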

\begin{proof}
    For a proof, see~\cite{PV21}.
\end{proof}

\begin{corollary}
    Let $f(x)=\lambda R_\theta x+a$ be as in Theorem~\ref{thm:main3}. If $\frac{\theta}{\pi}\in\Q$ and $f$ is an oblique self-embedding similitude of $K$, then $|\tan\theta|=1$.
\end{corollary}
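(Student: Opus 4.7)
The plan is to combine Proposition~\ref{prop:rationalangles} with the fact that the maps $\varphi_R$ and $\varphi_{R'}$ are homotheties (pure scalings), so they preserve all directions. Applying Proposition~\ref{prop:rationalangles} to any vertex $v_t$ of the convex hull $P$, we obtain an index $t'$ and squares $R,R'$ such that $f\varphi_{R'}(\ell_{t'}) \subset \varphi_R(\ell_t)$ or $f\varphi_{R'}(\ell_{t'}) \subset \varphi_R(\ell_{t+1})$. Because $\varphi_R, \varphi_{R'}$ introduce no rotation, the direction of $f\varphi_{R'}(\ell_{t'})$ equals the direction of $\ell_{t'}$ rotated by $\theta$. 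Denote by $\theta_s$ the angle that the edge $\ell_s$ makes with the $x$-axis. Then the above containment forces
\[
    \theta \equiv \theta_t - \theta_{t'} \pmod{\pi} \quad \text{or} \quad \theta \equiv \theta_{t+1} - \theta_{t'} \pmod{\pi}.
\]

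From the proof of Corollary~\ref{cor:rationaltangent}, each vertex $v_s$ has rational coordinates of the form $\left(\tfrac{i_s}{n-1},\tfrac{j_s}{n-1}\right)$, so each edge $\ell_s$ joins two rational points and hence $\tan\theta_s \in \mathbb{Q}$ for every $s$. Since $f$ is oblique, $\theta \not\equiv 0, \pi/2 \pmod{\pi}$, so in particular $\tan\theta$ is well-defined and $1 + \tan\theta_t \tan\theta_{t'} \neq 0$ (otherwise $\theta$ would be a right angle modulo $\pi$). The angle-subtraction formula
\[
    \tan\theta = \frac{\tan\theta_t - \tan\theta_{t'}}{1 + \tan\theta_t \tan\theta_{t'}}
\]
then shows that $\tan\theta \in \mathbb{Q}$.

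Finally, since $\theta/\pi \in \mathbb{Q}$ by hypothesis and $\tan\theta \in \mathbb{Q}$, Niven's theorem (Lemma~\ref{lem:niven}) gives $\tan\theta \in \{0,\pm 1\}$. The obliqueness of $f$ excludes $\tan\theta = 0$, so $|\tan\theta|=1$, as claimed.

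The cleanest part of the plan is the translation from the combinatorial statement in Proposition~\ref{prop:rationalangles} to the arithmetic identity $\tan\theta \in \mathbb{Q}$; the only subtlety to double-check is that $\varphi_R$, being a similarity built from the IFS maps $\varphi_{i,j}$ in~\eqref{eq:varphiij}, is a positive homothety and therefore does not alter edge directions, so that $\varphi_R(\ell_t)$ really does point in the direction $\theta_t$. Assuming that point, the argument reduces essentially to a one-line application of Niven's theorem. The harder statement to prove — and presumably the focus of the remainder of Section~5 — is the case $\theta/\pi \notin \mathbb{Q}$, where Niven's theorem is unavailable and one must instead first show that the rotation angle is forced to be a rational multiple of $\pi$ via a recursion on iterated applications of Proposition~\ref{prop:rationalangles} (sums of inner angles of $P$ modulo $2\pi$), as sketched in the introduction.
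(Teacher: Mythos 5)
Your proof is correct and takes essentially the same route as the paper: Proposition~\ref{prop:rationalangles} pins $\theta$ to the angle between two edges of $P$, rationality of the vertex coordinates $(\tfrac{i_s}{n-1},\tfrac{j_s}{n-1})$ gives $\tan\theta\in\Q$, and Niven's theorem finishes. The only difference is presentational: the paper writes $\theta$ as the telescoping sum $\sum_p(\pi-\alpha_{t'+p})\pmod{2\pi}$ of exterior angles and then appeals to Corollary~\ref{cor:rationaltangent}, whereas you collapse this to a single subtraction $\theta\equiv\theta_t-\theta_{t'}\pmod\pi$ of edge-direction angles, which is slightly cleaner and also lets you verify explicitly that the denominator $1+\tan\theta_t\tan\theta_{t'}$ is nonzero (a point the paper leaves implicit).
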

\begin{proof}
    By Lemma~\ref{lem:niven} and the obliqueness of $f$, it suffices to show that $\tan\theta\in\Q$. Fix any $1\leq t\leq p$ and pick $R,R',k,k',t'$ accordingly as in Proposition~\ref{prop:rationalangles}. Without loss of generality, assume that $f\varphi_{R'}(\ell_{t'})\subset\varphi_{R}(\ell_t)$, which is the first case in Proposition~\ref{prop:rationalangles}(2). 
    
    \begin{figure}[htbp]
        \centering
        \begin{tikzpicture}[rotate=30, scale=0.5,>=stealth]
            \draw[thick] (0,0) to (-2,0);
            \draw[thick,red] (-2,0) to (-3,-2);
            \draw[thick] (-3,-2) to (-2.5,-2.8);
            \draw[thick] (-2.5,-2.8) to (-1.5,-4.8);
            \draw[thick,red] (-1.5,-4.8) to (0,-4.8);
            \node at (-3,-1) [red] {$\ell_{t'}$};
            \node at (-0.5,-5.3) [red] {$\ell_t$};
            \node at (-2,0)[circle,fill, inner sep=1.5pt]{};
            \node at (-2.5,0.3) {$v_{t'}$};
            \node at (-1.5,-4.8)[circle,fill, inner sep=1.5pt]{};
            \node at (-1.8,-5.3) {$v_t$};
            \draw[thick,dashed] (-3,-2) to (-3.8,-3.6);
            \draw[thick,dashed] (-1.5,-4.8) to (-1,-5.8);
            \coordinate (A) at (-2,0);
            \coordinate (B) at (-3,-2);
            \coordinate (C) at (-2.5,-2.8);
            \coordinate (D) at (-1.5,-4.8);
            \coordinate (E) at (0,0);
            \coordinate (F) at (0,-4.8);
            \draw pic["$\alpha_{t'}$", draw=black, -, angle eccentricity=2.3,angle radius=0.2cm]{angle = B--A--E};
            \draw pic["$\alpha_{t'+1}$", draw=black, -, angle eccentricity=2.6,angle radius=0.2cm]{angle = C--B--A};
            \draw pic["$\alpha_{t}$", draw=black, -, angle eccentricity=1.8,angle radius=0.2cm]{angle = F--D--C};
        \end{tikzpicture}
        \caption{The rotation effect of $f$ (color online)}
        \label{fig:rotationoff}
    \end{figure}
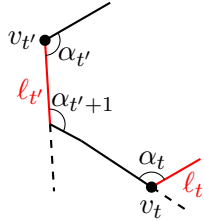

    Since $\varphi_R(\ell_{t})$ is parallel to $\varphi_{R'}(\ell_{t})$ and $\varphi_{R'},\varphi_R$ involve neither rotations nor reflections, $f$ sends a line parallel to $\ell_{t'}$ to another line parallel to $\ell_t$. Without loss of generality, assume that $t'<t$  and $v_1,\ldots,v_p$ are in counterclockwise order. Then a simple geometric observation (see Figure~\ref{fig:rotationoff}) tells us that $R_\theta$ can be realized as follows: first rotate the line containing $\ell_{t'}$ to the one containing $\ell_{t'+1}$, and then to the one containing $\ell_{t'+2}$ and so on until to $\ell_t$. Thus
    \[
        \theta = \sum_{p=1}^{t-t'} (\pi-\alpha_{t'+p}) \pmod{2\pi}.
    \]
    By Corollary~\ref{cor:rationaltangent}, $\tan\theta\in\Q$ as desired.
\end{proof}

With all these observations in hand, Theorem~\ref{thm:main3} can be easily proved.

\begin{proof}[Proof of Theorem~\ref{thm:main3}]
    Again, we may assume that $K$ is not supported in any line. By Lemma~\ref{lem:ekmtworesults}, $\frac{\theta}{\pi}\in\Q$. Then the statement follows directly from the above corollary.
\end{proof}

\begin{remark}
    The arguments developed in this paper seem inadequate to determine whether a rotation angle of $\pi/4$ can occur. It would be interesting to explore whether new geometric observations can be formulated to settle this question---though it follows readily from Lemma~\ref{lem:convandfixpt} that the convex hull of any such carpet cannot be a regular octagon.
\end{remark}

\bigskip
\noindent{\bf Acknowledgements.}
The author is partially supported by NSFC grant No. 12501113 and the Fundamental Research Funds for the Central Universities No. NS2025025. He thanks Professor Huo-Jun Ruan for reading the manuscript carefully and pointing out several mistakes and a number of typos in an earlier version, and Dingkun Hu for a nice observation regarding the setup of Condition A in the proof of Theorem~\ref{thm:notoblique} (under Case 3). He is also grateful to the anonymous referees for their thorough reviews of the manuscript and for providing numerous valuable comments, which significantly improve the presentation of this article.

%-----------------------------
\small
\bibliographystyle{amsplain}

\end{document}